\numberwithin{equation}{section}
\numberwithin{figure}{section}
\theoremstyle{plain}
\newtheorem{Theorem}{Theorem}[section]
\newtheorem*{Theorem*}{Theorem}
\newtheorem{Lemma}[Theorem]{Lemma}
\newtheorem{Proposition}[Theorem]{Proposition}
\newtheorem{Corollary}[Theorem]{Corollary}
\newtheorem{TheoremAlph}{Theorem}
\theoremstyle{definition}
\newtheorem{Definition}[Theorem]{Definition}
\newtheorem*{Definition*}{Definition}
\newtheorem{Remark}[Theorem]{Remark}
\newtheorem*{Remark*}{Remark}
\newcommand{\C}{\mathbb{C}}
\newcommand{\R}{\mathbb{R}}
\newcommand{\Z}{\mathbb{Z}}
\newcommand{\N}{\mathbb{N}}
\newcommand{\kk}{\Bbbk}
\newcommand{\G}{\mathbf{G}}
\newcommand{\Hom}{\mathrm{Hom}}
\newcommand{\End}{\mathrm{End}}
\newcommand{\Cdot}{\boldsymbol{\cdot}} %{\raisebox{-.6ex}{\scalebox{2}{$\cdot$}}}
\newcommand{\Stab}{\mathrm{Stab}}
\newcommand{\Rep}{\mathrm{Rep}}
\newcommand{\Tilt}{\mathrm{Tilt}}
\newcommand{\GL}{\mathrm{GL}}
\newcommand{\SL}{\mathrm{SL}}
\newcommand{\gfd}{\mathrm{gfd}}
\newcommand{\wfd}{\mathrm{wfd}}
\newcommand{\pr}{\mathrm{pr}}
\providecommand{\abs}[1]{\lvert#1\rvert}
\title[Generic direct summands of tensor products]{Generic direct summands of tensor products\\for simple algebraic groups and quantum groups}
\author{Jonathan Gruber}
\address{Department of Mathematics, University of York, Heslington, York, UK}
\email{jonathan.gruber@york.ac.uk}
\subjclass{20G05 (primary), 20G42, 17B55, 17B10 (secondary)}
\keywords{algebraic group, quantum group, tensor product, Krull-Schmidt decomposition}
\date{\today}
\begin{document}

\begin{abstract}
	Let $\G$ be either a simple linear algebraic group over an algebraically closed field of positive characteristic or a quantum group at a root of unity.
	We define new classes of indecomposable $\G$-modules, which we call \emph{generic direct summands} of tensor products because they appear generically in Krull-Schmidt decompositions of tensor products of simple $\G$-modules and of Weyl modules.
	We establish a Steinberg-Lusztig tensor product theorem for generic direct summands of tensor products of simple $\G$-modules and provide examples of generic direct summands for $\G$ of type $\mathrm{A}_1$ and $\mathrm{A}_2$.
\end{abstract}

\maketitle

\section*{Introduction}

Tensor products are ubiquitous in representation theory, and the problem of finding direct sum decompositions of tensor products of representations has been studied by many mathematicians in many different settings.
Two well-known examples are the \emph{Clebsch-Gordan formula} and the \emph{Littlewood-Richardson rule}, which describe the decompositions of tensor products of irreducible representations of the algebraic groups $\SL_2(\C)$ and $\GL_n(\C)$, respectively, as direct sums of irreducible representations.
The situation becomes more complicated when one replaces the field $\C$ of complex numbers by a field of positive characteristic.
Then, the corresponding categories of representations are no longer semisimple, i.e.\ not every representation can be written as a direct sum of irreducible representations.
Nevertheless, a finite-dimensional representation usually still admits an essentially unique decomposition as a direct sum of indecomposable representations, called a \emph{Krull-Schmidt decomposition}.
In this article, we exhibit certain indecomposable representations of algebraic groups (and quantum groups) which appear \emph{generically} in Krull-Schmidt decompositions of tensor products of certain important representations, and which we therefore call \emph{generic direct summands of tensor products}.

More specifically, let $\G$ be a simply connected simple algebraic group over an algebraically closed field of characteristic $\ell>0$.
(All of our results have analogues for quantum groups at roots of unity, but we focus on algebraic groups in the introduction.)
Then the category $\Rep(\G)$ of finite-dimensional rational representations of $\G$ has simple objects $\{ L(\lambda) \mid \lambda \in X^+\}$ parametrized by the set $X^+$ of dominant weights for $\G$, and every simple $\G$-module $L(\lambda)$ arises as the unique simple quotient of the Weyl module $\Delta(\lambda)$ of highest weight $\lambda$.
We are interested in indecomposable direct summands of tensor products of the form $L(\lambda) \otimes L(\mu)$ or $\Delta(\lambda) \otimes \Delta(\mu)$, for $\lambda,\mu \in X^+$.

Before we state our results, we need to recall some facts about blocks of algebraic groups.
Let $\Phi$ be the root system of $\G$, with (finite) Weyl group $W_\mathrm{fin}$.
Then the affine Weyl group $W_\mathrm{aff} =\Z\Phi \rtimes W_\mathrm{fin}$ acts on the weight lattice $X$ of $\G$ via the so-called $\ell$-dilated dot action.
The \emph{linkage principle} asserts that two simple $\G$-modules $L(\lambda)$ and $L(\mu)$, with $\lambda,\mu \in X^+$, belong to the same block of $\Rep(\G)$ only if $\lambda$ and $\mu$ belong to the same $W_\mathrm{aff}$-orbit.
Let us write $C_\mathrm{fund}$ for the fundamental alcove in $X_\R = X \otimes_\Z \R$ (see Subsection \ref{subsec:linkagetranslation}), so that $\overline{C}_\mathrm{fund} \cap X$ is a system of representatives for the $W_\mathrm{aff}$-orbits in $X$.
For a weight $\lambda \in \overline{C}_\mathrm{fund} \cap X$, the \emph{linkage class} of $\lambda$ is the Serre subcategory of $\Rep(\G)$ generated by the simple $\G$-modules $L(\mu)$ for $\mu \in W_\mathrm{aff} \Cdot \lambda \cap X^+$.
By the linkage principle, the category $\Rep(\G)$ has a decomposition
\[ \Rep(\G) = \bigoplus_{\lambda \in \overline{C}_\mathrm{fund} \cap X} \Rep_\lambda(\G) . \]
Furthermore, there are \emph{translation functors}
\[ T_\lambda^\mu \colon \Rep_\lambda(\G) \longrightarrow \Rep_\mu(\G) \]
for $\lambda,\mu \in \overline{C}_\mathrm{fund} \cap X$, and if $\lambda,\mu \in C_\mathrm{fund} \cap X$ then $T_\lambda^\mu$ is an equivalence with quasi-inverse $T_\mu^\lambda$.
We assume from now on that $\ell \geq h$, the Coxeter number of $\G$, and we call the linkage class $\Rep_0(\G)$ the \emph{principal block} of $\G$.

The classical linkage principle and the translation functors are a priori not compatible with the monoidal structure of $\Rep(\G)$, but a workaround for this problem was recently provided in \cite{GruberLinkageTranslation}.
Namely, there is a tensor ideal of \emph{singular $\G$-modules} in $\Rep(\G)$ such that the essential image of the principal block in the associated quotient category is closed under tensor products.
Furthermore, the monoidal structure of the quotient category is controlled by the resulting monoidal structure on the quotient of the principal block, via a \emph{translation principle for tensor products}.
The non-singular $\G$-modules are called \emph{regular}, and every $\G$-module $M$ has a direct sum decomposition
\[ M \cong M_\mathrm{sing} \oplus M_\mathrm{reg} \]
into its singular part $M_\mathrm{sing}$ and its regular part $M_\mathrm{reg}$.
In order to understand the regular parts of tensor products of simple $\G$-modules or Weyl modules with arbitrary highest weights, it suffices by the results of \cite[Section 3]{GruberLinkageTranslation} to describe the regular parts of tensor products of simple $\G$-modules or Weyl modules in the principal block.
Our first main result is that the regular part of a tensor product of Weyl modules in the principal block consists of a single indecomposable $\G$-module.

\begin{TheoremAlph} \label{thm:introWeyl}
	Let $x,y \in W_\mathrm{aff}$ such that $x\Cdot0,y\Cdot0 \in X^+$.
	Then the tensor product $\Delta(x\Cdot0) \otimes \Delta(y\Cdot0)$ has a unique regular indecomposable direct summand $G_\Delta(x,y)$.
\end{TheoremAlph}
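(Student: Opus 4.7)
The plan is to combine the existence of Weyl filtrations for tensor products with the translation principle for tensor products from \cite{GruberLinkageTranslation}, reducing the statement to an indecomposability claim in the quotient category modulo singular modules. Set $\lambda = x \Cdot 0$ and $\mu = y \Cdot 0$.

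First, by the theorem of Mathieu (with its quantum analogue due to Paradowski), the tensor product $\Delta(\lambda) \otimes \Delta(\mu)$ admits a filtration by Weyl modules, so every indecomposable direct summand inherits such a filtration. Since the $(\lambda+\mu)$-weight space is one-dimensional, the Weyl module $\Delta(\lambda+\mu)$ occurs with multiplicity one at the top, and there is a unique indecomposable direct summand $N$ containing the highest weight vector $v_\lambda \otimes v_\mu$; every other indecomposable summand has strictly smaller highest weight. I would take this $N$ as the candidate for $G_\Delta(x,y)$.

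To show that every indecomposable summand other than $N$ is singular, I would pass to the quotient $\overline{\Rep}(\G) := \Rep(\G)/\mathcal{S}$ by the tensor ideal $\mathcal{S}$ of singular modules. Singular summands vanish in this quotient, so the Krull--Schmidt decomposition of the regular part of $\Delta(\lambda) \otimes \Delta(\mu)$ matches that of its image $\bar\Delta(\lambda) \otimes \bar\Delta(\mu)$ with respect to the monoidal structure on the essential image of $\Rep_0(\G)$ in $\overline{\Rep}(\G)$ provided by \cite{GruberLinkageTranslation}. It therefore suffices to show that $\bar\Delta(\lambda) \otimes \bar\Delta(\mu)$ is indecomposable in $\overline{\Rep}(\G)$.

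For this indecomposability, I would proceed by induction on the combined Coxeter lengths of $x$ and $y$ in $W_\mathrm{aff}$, using wall-crossing functors $\Theta_s$ attached to affine simple reflections. The base case $x = y = 1$ reduces to the indecomposability of $\bar\Delta(0) \otimes \bar\Delta(0) \cong \bar\kk$. The inductive step relates $\bar\Delta((sx) \Cdot 0)$ to $\bar\Delta(x \Cdot 0)$ through $\Theta_s$, and uses that in the quotient $\overline{\Rep}(\G)$, wall-crossings commute with tensoring by a fixed module (this being the essence of the translation principle for tensor products). The main obstacle is executing this inductive argument: one has to verify that the wall-crossing of an indecomposable tensor product of Weyl modules in $\overline{\Rep}(\G)$ contains a unique indecomposable summand of the correct highest weight, and to rule out the appearance of extra indecomposable summands sharing that highest weight. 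This requires a careful highest-weight-theoretic analysis in the quotient category, likely using the $\mathrm{Ext}$-vanishing properties of Weyl modules in an essential way.
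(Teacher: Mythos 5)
The approach is genuinely different from the paper's, but there is a gap in the inductive step that the paper's method is specifically designed to avoid.

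Your first reduction is sound: taking $N$ to be the unique indecomposable summand supporting the $(\lambda+\mu)$-weight space, it would indeed suffice to show that the image $\bar\Delta(\lambda)\otimes\bar\Delta(\mu)$ in the quotient $\Rep(\G)/\mathcal{S}$ is indecomposable. The difficulty is the proposed induction via wall-crossings $\Theta_s = T_\mu^0 T_0^\mu$ for $\mu$ an $\ell$-singular weight on a wall. The translation principle for tensor products (Theorem~\ref{thm:translationtensor}) only governs translations $T_0^\lambda$ between the $\ell$-\emph{regular} weights $0$ and $\lambda\in C_\mathrm{fund}$; it says nothing about the singular translations $T_0^\mu$ and $T_\mu^0$ that make up a wall-crossing, and $\Theta_s$ is not a monoidal functor on the quotient. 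So the claim that ``wall-crossings commute with tensoring by a fixed module in $\overline{\Rep}(\G)$'' is not an instance of the cited translation principle, and you offer no substitute. Moreover, even granting such commutativity, the substantive inductive claim — that applying $\Theta_s$ to an indecomposable image of a tensor product of Weyl modules in the quotient produces a unique summand with the expected highest weight and no extraneous ones — is essentially as hard as the theorem itself: wall-crossing does not preserve indecomposability, and the quotient category gives you no a priori bound on how many regular summands can appear. You acknowledge this obstacle, but it is not a technical verification left to the reader; it is the whole content.

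The paper sidesteps this entirely by never inducting on length. Instead it uses Proposition~\ref{prop:minimalcomplexWeylmodule}: the minimal tilting complex $C_\mathrm{min}(\Delta(x\Cdot 0))$ is concentrated in degrees $0,\dots,\ell(x)$, is negligible below degree $\ell(x)$, and is $T(0)$ in degree $\ell(x)$. Tensoring two such complexes and minimizing, the degree-$(\ell(x)+\ell(y))$ term is exactly $T(0)$ (with multiplicity one) and everything else is negligible — this is a closed-form, non-inductive statement. Since $C_\mathrm{min}$ is additive under Krull--Schmidt decompositions, exactly one summand can carry the $T(0)$ in top degree, and it is the unique regular one. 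If you want to salvage your approach, you would need an analogue of the ``negligibility below top degree'' statement for tensor products of Weyl modules proved directly, without the wall-crossing induction — at which point you would essentially be reproving Proposition~\ref{prop:minimalcomplexWeylmodule} anyway.
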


For $\lambda,\mu \in C_\mathrm{fund} \cap X$, the translation principle for tensor products from \cite[Theorem 3.14]{GruberLinkageTranslation} implies that
\[ \big( \Delta(x\Cdot\lambda) \otimes \Delta(y\Cdot\mu) \big)_\mathrm{reg} \cong \bigoplus_{\nu \in C_\mathrm{fund} \cap X} T_0^\nu G_\Delta(x,y)^{\oplus c_{\lambda,\mu}^\nu} , \]
where $c_{\lambda,\mu}^\nu$ are the structure constants of the Verlinde algebra of $\G$ (see Theorem \ref{thm:translationtensor} below).
In other words, up to applying translation functors, the $\G$-module $G_\Delta(x,y)$ appears \emph{generically} in Krull-Schmidt decompositions of tensor products of the form $\Delta(x\Cdot\lambda) \otimes \Delta(y\Cdot\mu)$.
We call $G_\Delta(x,y)$ the \emph{generic direct summand} of $\Delta(x\Cdot0) \otimes \Delta(y\Cdot0)$.

The tensor product of two simple $\G$-modules in $\Rep_0(\G)$ may in general have more than one regular indecomposable direct summand, but we still get a unique regular indecomposable direct summand of maximal \emph{good filtration dimension} (see Equation \eqref{eq:gfdwfd} below).
Writing $\ell \colon W_\mathrm{aff} \to \Z_{\geq 0}$ for the length function on $W_\mathrm{aff}$, our second main result is as follows:

\begin{TheoremAlph} \label{thm:introsimple}
	Let $x,y \in W_\mathrm{aff}$ such that $x\Cdot0,y\Cdot0 \in X^+$.
	Then the tensor product $L(x\Cdot0) \otimes L(y\Cdot0)$ has a unique regular indecomposable direct summand $G(x,y)$ of good filtration dimension $\ell(x)+\ell(y)$.
\end{TheoremAlph}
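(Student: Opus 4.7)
The plan is to reduce Theorem~\ref{thm:introsimple} to Theorem~\ref{thm:introWeyl} by analyzing the canonical surjection
\[ \pi : \Delta(x\Cdot 0) \otimes \Delta(y\Cdot 0) \twoheadrightarrow L(x\Cdot 0) \otimes L(y\Cdot 0) \]
together with careful control of good filtration dimensions. The argument splits naturally into an upper bound on $\gfd$, the existence of a summand saturating this bound, and its uniqueness.

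First I would establish the bound $\gfd(L(z\Cdot 0)) \leq \ell(z)$ by induction on $\ell(z)$. The short exact sequence $0 \to \rad \Delta(z\Cdot 0) \to \Delta(z\Cdot 0) \to L(z\Cdot 0) \to 0$, combined with the principal-block identity $\gfd(\Delta(z\Cdot 0)) = \ell(z)$ and the fact that composition factors of $\rad\Delta(z\Cdot 0)$ correspond to $w \in W_\mathrm{aff}$ with $\ell(w) < \ell(z)$, supplies the inductive step. Submultiplicativity $\gfd(M \otimes N) \leq \gfd(M) + \gfd(N)$ then yields $\gfd(L(x\Cdot 0) \otimes L(y\Cdot 0)) \leq \ell(x) + \ell(y)$, so every indecomposable summand has $\gfd$ at most $\ell(x)+\ell(y)$.

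For existence, I would use that the singular/regular decomposition is functorial (singular modules form a tensor ideal), so $\pi$ descends to a surjection $G_\Delta(x,y) \twoheadrightarrow (L(x \Cdot 0) \otimes L(y \Cdot 0))_\mathrm{reg}$. The kernel $\ker\pi$ sits in an extension involving $\rad\Delta(x\Cdot 0)\otimes L(y\Cdot 0)$ and $\Delta(x\Cdot 0) \otimes \rad \Delta(y \Cdot 0)$, and by the bounds from the previous paragraph has $\gfd$ at most $\ell(x) + \ell(y) - 1$. Assuming $\gfd(G_\Delta(x,y)) = \ell(x) + \ell(y)$ as a byproduct of the proof of Theorem~\ref{thm:introWeyl}, a long exact $\mathrm{Ext}^\bullet(-, \nabla)$ argument applied against a module with good filtration then forces $(L(x\Cdot 0) \otimes L(y\Cdot 0))_\mathrm{reg}$ to realize $\gfd$ equal to $\ell(x) + \ell(y)$, producing at least one regular indecomposable summand $G(x,y)$ of this $\gfd$.

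Uniqueness is the step I expect to be the main obstacle. Any two indecomposable summands $N_1, N_2$ of $(L(x\Cdot 0) \otimes L(y\Cdot 0))_\mathrm{reg}$ of $\gfd$ $\ell(x)+\ell(y)$ would both appear as quotients of the indecomposable module $G_\Delta(x,y)$ without loss of $\gfd$, so the locality of $\End_\G(G_\Delta(x,y))$ should impose strong constraints on how many such quotients can occur in a Krull-Schmidt decomposition of the regular part. My preferred route is to characterize a summand of maximal $\gfd$ by the presence of a specific extremal composition factor $L(\nu)$, occurring with multiplicity one in the regular part, which forces it to lie in a single Krull-Schmidt component. A fallback would be a double induction on $\ell(x) + \ell(y)$ using the radical filtrations of the two Weyl modules together with the inductive version of Theorem~\ref{thm:introsimple} at smaller length, the delicate point being to control how $\gfd$ distributes across indecomposable summands of quotients.
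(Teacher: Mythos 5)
Your plan rests on a premise the paper explicitly disavows: you write that ``the singular/regular decomposition is functorial (singular modules form a tensor ideal),'' but these are not the same thing, and the paper cautions (Definition 3.10 discussion) that the decomposition $M \cong M_{\mathrm{sing}} \oplus M_{\mathrm{reg}}$ ``is not canonical or functorial.'' The tensor-ideal property only guarantees closure of singular modules under tensoring and retracts; it does not produce a natural transformation $M \to M_{\mathrm{reg}}$, so the surjection $\pi \colon \Delta(x\Cdot 0) \otimes \Delta(y\Cdot 0) \twoheadrightarrow L(x\Cdot 0) \otimes L(y\Cdot 0)$ does not descend to a map $G_\Delta(x,y) \to (L(x\Cdot 0) \otimes L(y\Cdot 0))_{\mathrm{reg}}$, let alone a surjection. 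This blocks both your existence argument and your first uniqueness route (``both appear as quotients of $G_\Delta(x,y)$''), and the fallback via an ``extremal composition factor of multiplicity one'' is not substantiated.

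The paper's actual proof of Theorem \ref{thm:genericdirectsummandsimplemodule} bypasses Theorem \ref{thm:introWeyl} entirely and works directly with minimal tilting complexes, exactly as in the Weyl-module case but using Proposition \ref{prop:minimalcomplexsimplemodule} in place of Proposition \ref{prop:minimalcomplexWeylmodule}. For $x,y \in W_{\mathrm{aff}}^+$, one forms the tensor product complex $C_\mathrm{min}\big(L(x\Cdot 0)\big) \otimes C_\mathrm{min}\big(L(y\Cdot 0)\big)$ and observes that its degree-$(\ell(x)+\ell(y))$ term is $T(0) \otimes T(0) \cong T(0)$, its degree-$(\ell(x)+\ell(y)-1)$ term is negligible (using Proposition \ref{prop:minimalcomplexsimplemodule}(4) for the $\ell(x)-1$ and $\ell(y)-1$ terms), and all higher degrees vanish. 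Corollary 2.8 of \cite{GruberMinimalTilting} then pins down $C_\mathrm{min}\big(L(x\Cdot 0)\otimes L(y\Cdot 0)\big)_{\ell(x)+\ell(y)} \cong T(0)$, and since minimal tilting complexes decompose additively along Krull-Schmidt decompositions, there is exactly one indecomposable summand whose minimal complex is nonzero in that top degree; by \eqref{eq:gfdwfd} this is the unique indecomposable summand of good filtration dimension $\ell(x)+\ell(y)$, and it is regular since $T(0)$ is not negligible. The extension to $W_{\mathrm{ext}}^+$ is then a translation-functor bookkeeping step via Lemma \ref{lem:translationtensorfundamentalgroup}. Note the key technical difference from your plan: the paper never produces or passes through a map between the two tensor products; it reads uniqueness off the multiplicity of the highest-degree tilting term, which sidesteps the non-functoriality issue entirely. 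Your inequality $\gfd(L(z\Cdot 0)) \leq \ell(z)$ is correct (indeed $\gfd(L(z\Cdot 0)) = \ell(z)$ by Remark 4.2 of \cite{GruberLinkageTranslation}), and submultiplicativity gives the upper bound you want, but that is the only part of your argument that survives.
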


As before, the $\G$-module $T_0^\nu G(x,y)$ appears generically in Krull-Schmidt decompositions of tensor products of the form $L(x\Cdot\lambda) \otimes L(y\Cdot\mu)$, for $\lambda,\mu,\nu \in C_\mathrm{fund} \cap X$.
We call $G(x,y)$ the \emph{generic direct summand} of $L(x\Cdot0) \otimes L(y\Cdot0)$.

We believe that describing the structure of generic direct summands will be a key step towards understanding the monoidal structure of $\Rep(\G)$.
An important tool for finding such descriptions is an analogue of \emph{Steinberg's tensor product theorem} for generic direct summands of tensor products of simple $\G$-modules, which we explain below.
Let us write $X_1$ for the set of $\ell$-restricted weights in $X^+$ (see Section \ref{sec:SteinbergLusztigTPtheorem}), and note that every weight $\lambda \in X^+$ has a decomposition $\lambda = \lambda_0 + \ell \lambda_1$ with $\lambda_0 \in X_1$ and $\lambda_1 \in X^+$.
By Steinberg's tensor product theorem, the simple $\G$-module $L(\lambda)$ admits a tensor product decomposition
\[ L(\lambda) \cong L(\lambda_0) \otimes L(\lambda_1)^{[1]} , \]
where $M \mapsto M^{[1]}$ denotes the Frobenius twist functor on $\Rep(\G)$.

Now suppose that $\lambda = x \Cdot 0$ for some $x \in W_\mathrm{aff}$, and let $\mu = \mu_0 + \ell\mu_1 = y\Cdot0$ with $\mu_0 \in X_1$, $\mu_1 \in X^+$ and $y \in W_\mathrm{aff}$.
Then we can write $\lambda_0 = x_0 \Cdot 0$ and $\mu_0 = y_0 \Cdot 0$ for certain elements $x_0 , y_0 \in W_\mathrm{ext}$ of the \emph{extended affine Weyl group} $W_\mathrm{ext} = X \rtimes W_\mathrm{fin}$ of $\G$.
The theory of generic direct summands described above readily extends to tensor products of simple $\G$-modules with highest weights in the $W_\mathrm{ext}$-orbit of $0$ (rather than just the $W_\mathrm{aff}$-orbit), and this allows us to state the following analogue of the Steinberg tensor product theorem:

\begin{TheoremAlph} \label{thm:introSteinberg}
	Let $x,y \in W_\mathrm{ext}$ with $x\Cdot0 \in X^+$ and $y\Cdot0 \in X^+$, and let $x_0,y_0 \in W_\mathrm{ext}$ and $\lambda,\mu \in X^+$ such that
	\[ x\Cdot0 = x_0\Cdot0 + \ell\lambda , \quad x_0\Cdot0 \in X_1 , \qquad y\Cdot0 = y_0\Cdot0 + \ell\mu , \quad y_0\Cdot0 \in X_1 . \]
	Further denote by $M(\lambda,\mu)$ the unique indecomposable direct summand of $L(\lambda) \otimes L(\mu)$ with a non-zero $(\lambda+\mu)$-weight space.
	Then $G(x,y)$ is a direct summand of $G(x_0,y_0) \otimes M(\lambda,\mu)^{[1]}$.
	If $G(x_0,y_0)$ is indecomposable as a module over the first Frobenius kernel of $\G$ then
	\[ G(x,y) \cong G(x_0,y_0) \otimes M(\lambda,\mu)^{[1]} . \]
\end{TheoremAlph}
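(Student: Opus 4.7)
I would approach the theorem via the classical Steinberg tensor product theorem. In its $W_\mathrm{ext}$-equivariant form, it gives $L(x\Cdot 0) \cong L(x_0 \Cdot 0) \otimes L(\lambda)^{[1]}$ and $L(y\Cdot 0) \cong L(y_0\Cdot 0) \otimes L(\mu)^{[1]}$, so that, after rearranging tensor factors,
\[
 L(x\Cdot 0) \otimes L(y\Cdot 0) \;\cong\; \bigl(L(x_0\Cdot 0) \otimes L(y_0\Cdot 0)\bigr) \otimes \bigl(L(\lambda) \otimes L(\mu)\bigr)^{[1]} .
\]
By Theorem \ref{thm:introsimple}, $G(x_0, y_0)$ is a direct summand of the first factor, and by definition $M(\lambda,\mu)$ is a direct summand of $L(\lambda) \otimes L(\mu)$. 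Consequently $G(x_0,y_0) \otimes M(\lambda,\mu)^{[1]}$ is a direct summand of $L(x\Cdot 0) \otimes L(y\Cdot 0)$.

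To prove the first assertion---that $G(x,y)$ appears as a summand inside $G(x_0,y_0) \otimes M(\lambda,\mu)^{[1]}$---I would invoke the uniqueness clause in Theorem \ref{thm:introsimple}: it suffices to exhibit a single indecomposable summand that is regular and has good filtration dimension exactly $\ell(x) + \ell(y)$. This relies on two additivity facts: a length formula $\ell(x) = \ell(x_0) + \langle \lambda, 2\rho^\vee \rangle$ coming from the decomposition $x = t_\lambda x_0$ in $W_\mathrm{ext}$ (and similarly for $y$), together with a matching additivity of $\gfd$ under tensor products with Frobenius twists of indecomposable modules. Regularity is preserved because all weights of $M(\lambda,\mu)^{[1]}$ lie in $\ell X$, so the construction cannot move $G(x_0,y_0)$ out of a regular block.

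For the second assertion, assume $G(x_0,y_0)$ is indecomposable as a $\G_1$-module, so that $R := \End_{\G_1}(G(x_0,y_0))$ is a local $\kk$-algebra. Since $\G_1$ acts trivially on $M(\lambda,\mu)^{[1]}$, one has
\[
 \End_\G\bigl( G(x_0,y_0) \otimes M(\lambda,\mu)^{[1]} \bigr) \;=\; \bigl( R \otimes \End_\kk(M(\lambda,\mu)) \bigr)^\G ,
\]
with $\G$ acting on $R$ through $\G/\G_1 \cong \G$. The quotient $R \twoheadrightarrow R/\rad(R) = \kk$ induces a ring homomorphism from this endomorphism algebra into $\End_\G(M(\lambda,\mu))$ whose kernel consists of $\G$-invariants in $\rad(R) \otimes \End_\kk(M(\lambda,\mu))$ and is nilpotent (since $\rad(R)$ is). The target is local because $M(\lambda,\mu)$ is indecomposable, so the source is local too, forcing $G(x_0,y_0) \otimes M(\lambda,\mu)^{[1]}$ to be $\G$-indecomposable; combined with the first assertion, this yields $G(x_0,y_0) \otimes M(\lambda,\mu)^{[1]} \cong G(x,y)$.

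The main obstacle is the good filtration dimension statement in the second paragraph: one has to establish additivity of $\gfd$ under tensor products with Frobenius twists of indecomposable modules. I expect this to require the Lyndon--Hochschild--Serre spectral sequence for $\G_1 \lhd \G$ applied to $\mathrm{Ext}^\bullet_\G\bigl(-, \nabla(\nu)\bigr)$, together with a Frobenius-compatible good filtration of the relevant $\nabla$-modules.
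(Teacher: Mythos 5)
Your overall strategy matches the paper's: write $L(x\Cdot 0)\otimes L(y\Cdot 0)\cong \big(L(x_0\Cdot 0)\otimes L(y_0\Cdot 0)\big)\otimes\big(L(\lambda)\otimes L(\mu)\big)^{[1]}$, use the length formula $\ell(x)=\ell(x_0)+2(\lambda,\rho^\vee)$ from $x=t_\lambda x_0$ (this is Lemma~\ref{lem:lengthdominant}), show that $G(x_0,y_0)\otimes M(\lambda,\mu)^{[1]}$ contains a regular indecomposable summand of good filtration dimension $\ell(x)+\ell(y)$, and deduce indecomposability from the local-endomorphism-ring argument (which is in substance a re-derivation of Donkin's lemma, the paper's Lemma~\ref{lem:Donkintensorproductindecomposable}/Corollary~\ref{cor:twistedtensorproductindecomposable}; note $\rad(R)$ is $\G$-stable because the $\G/\G_1$-action on $R$ is by algebra automorphisms). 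That part of the argument is sound.

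However, the step you yourself flag as ``the main obstacle'' is the crux of the proof and is not filled in. Two things go wrong in your sketch. First, the claim ``regularity is preserved because all weights of $M(\lambda,\mu)^{[1]}$ lie in $\ell X$, so the construction cannot move $G(x_0,y_0)$ out of a regular block'' conflates \emph{regular} (in the paper's sense: some term of the minimal tilting complex is non-negligible) with \emph{lying in an $\ell$-regular linkage class}; the two are different, since e.g.\ any negligible tilting module in $\Rep_0(\G)$ is singular yet lies in the principal block, and the singular modules form a tensor ideal, so tensoring can in general destroy regularity. Second, additivity of $\gfd$ under tensoring with Frobenius twists is precisely what needs a real argument. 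The paper resolves both problems at once via the notion of a \emph{strongly regular} module (Remark~\ref{rem:genericdirectsummandstronglyregular} and \cite[Definition~4.1, Lemma~4.3]{GruberLinkageTranslation}): $G(x_0,y_0)$ is strongly regular, and Proposition~\ref{prop:Frobeniustwistregular} shows (by an induction on composition length through short exact sequences and minimal tilting complexes) that any Frobenius twist $N^{[1]}$ is strongly regular of the predicted $\gfd$; then \cite[Lemma~4.3]{GruberLinkageTranslation} gives strong regularity and the additive $\gfd$ for the tensor product. Your proposal to attack this through an LHS spectral sequence for $\G_1\lhd\G$ is a plausible alternative route, but it is only gestured at; without carrying it out, the central claim that there is a regular indecomposable summand of $\gfd$ exactly $\ell(x)+\ell(y)$ remains unproven.
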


The theorem is used in Section \ref{sec:smallrank} to give a complete description of the generic direct summands of tensor products of simple $\G$-modules for $\G$ of type $\mathrm{A}_1$ and $\mathrm{A}_2$.
It is important to point out that the condition for the isomorphism $G(x,y) \cong G(x_0,y_0) \otimes M(\lambda,\mu)^{[1]}$ is slightly different in the quantum analogue of Theorem \ref{thm:introSteinberg} (see Theorem \ref{thm:lusztigtensorproductgenericdirectsummand}).

To conclude the introduction, we give a an outline of the contents of this article.
Section \ref{sec:preliminaries} is a summary of some important results about the representation theory of $\G$, including the classical linkage principle and translation principle.
In Section \ref{sec:minimalcomplexessingularmodules} we recall some of the main results of \cite{GruberLinkageTranslation} about minimal tilting complexes, singular $\G$-modules and the linkage principle and translation principle for tensor products.
Generic direct summands of tensor products are introduced and studied in Section~\ref{sec:genericdirectsummands}; see Theorems \ref{thm:genericdirectsummandWeylmodule} and \ref{thm:genericdirectsummandsimplemodule} for the proofs of Theorems \ref{thm:introWeyl} and \ref{thm:introsimple}, respectively.
In Section \ref{sec:SteinbergLusztigTPtheorem}, we study the relation between Steinberg's tensor product theorem and generic direct summands of tensor products of simple $\G$-modules, and we prove Theorem \ref{thm:introSteinberg} in Theorem \ref{thm:steinbergtensorproductgenericdirectsummand} (and its quantum analogue in Theorem \ref{thm:lusztigtensorproductgenericdirectsummand}).
Finally, in Section \ref{sec:smallrank}, we provide examples of generic direct summands of tensor products of simple $\G$-modules and of dual Weyl modules for $\G$ of type $\mathrm{A}_1$ and $\mathrm{A}_2$.

\subsection*{Acknowledgements}

This article contains some of the main results of my PhD thesis, and I would like to express my deepest gratitude to my PhD advisor Donna Testerman.
I would also like to thank Stephen Donkin, Daniel Nakano and Thorge Jensen for helpful discussions and comments.
This work was supported by the Singapore MOE grant R-146-000-294-133 and by the Swiss National Science Foundation under the grants FNS 200020\_175571, FNS 200020\_207730 and P500PT\_206751.

\section{Preliminaries} \label{sec:preliminaries}

We follow the notation and conventions of \cite{GruberLinkageTranslation}, which we recall in this section.

\subsection{Roots and weights}

Let $\Phi$ be a simple root system in a euclidean space $X_\R$ with scalar product $( - \,, - )$. For $\alpha\in\Phi$, we denote by $\alpha^\vee = 2 \alpha / ( \alpha , \alpha )$ the \emph{coroot} of $\alpha$ and by $s_\alpha \in \GL(X_\R)$ the reflection corresponding to $\alpha$, so $s_\alpha(x) = x - (x , \alpha^\vee) \cdot \alpha$ for $x\in X_\R$.
The \emph{weight lattice} of $\Phi$ is
\[ X \coloneqq \{ \lambda \in X_\R \mid ( \lambda , \alpha^\vee ) \in \Z \text{ for all } \alpha \in \Phi \} , \]
and the \emph{Weyl group} of $\Phi$ is the (finite) subgroup $W_\mathrm{fin} = \langle s_\alpha \mid \alpha\in\Phi \rangle$ of $\GL(X_\R)$ generated by the reflections $s_\alpha$.
The index of the root lattice $\Z\Phi$ in the weight lattice $X$ is finite, and we call $X / \Z\Phi$ the \emph{fundamental group} of $\Phi$.
Now fix a positive system $\Phi^+ \subseteq \Phi$ corresponding to a base $\Pi$ of $\Phi$, and let
\[ X^+ \coloneqq \{ \lambda \in X \mid (\lambda , \alpha^\vee) \geq 0 \text{ for all } \alpha \in \Phi^+ \} \]
be the set of \emph{dominant weights} with respect to $\Phi^+$.
We consider the partial order on $X$ that is defined by $\lambda \geq  \mu$ if and only if $\lambda-\mu$ is a non-negative integer linear combination of positive roots.
Furthermore, we write $\tilde{\alpha}_\mathrm{h}$ for the highest root and $\alpha_\mathrm{h}$ for the highest short root in $\Phi^+$, with the convention that $\tilde{\alpha}_\mathrm{h}=\alpha_\mathrm{h}$ (and that all roots are short) if $\Phi$ is simply laced.
The \emph{height} of an element $\gamma= \sum_{\alpha\in\Pi} c_\alpha \cdot \alpha$ of the root lattice is $\mathrm{ht}(\gamma)= \sum_{\alpha\in\Pi} c_\alpha$.
We let $\rho = \frac12 \sum_{\alpha\in\Phi^+} \alpha$ be the half sum of all positive roots and write $h = ( \rho , \alpha_\mathrm{h}^\vee ) + 1$ for the \emph{Coxeter number} of $\Phi$.
The \emph{dot action} of $W_\mathrm{fin}$ on $X_\R$ is defined by
\[ w \Cdot x = w(x+\rho)-\rho \]
for $w\in W_\mathrm{fin}$ and $x\in X_\R$.
The set of \emph{simple reflections} $S_\mathrm{fin} = \{ s_\alpha \mid \alpha\in\Pi \}$ with respect to $\Pi$ is a minimal generating set of $W_\mathrm{fin}$, and $(W_\mathrm{fin},S_\mathrm{fin})$ is a Coxeter system.
As $W_\mathrm{fin}$ is finite, there exists a unique longest element $w_0 \in W_\mathrm{fin}$ with respect to $S_\mathrm{fin}$.
For every simple root $\alpha\in\Pi$, we write $\varpi_\alpha\in X$ for the \emph{fundamental dominant weight} with $(\varpi_\alpha , \alpha^\vee)=1$ and $(\varpi_\alpha , \beta^\vee)=0$ for $\alpha \neq \beta \in \Pi$.

\subsection{Algebraic groups and quantum groups} \label{sec:algebraicgroupsquantumgroups}

The categories of representations of simple algebraic groups over fields of positive characteristic and of quantum groups at roots of unity have many structural properties in common.
We will treat the two cases simultaneously, referring to the former as the \emph{modular case} and to the latter as the \emph{quantum case} when a distinction is necessary.

In the modular case, $\G$ denotes a simply-connected simple algebraic group scheme over an algebraically closed field $\kk$ of characteristic $\ell>0$.
We assume that $\G$ arises by extension of scalars from a split simply-connected simple algebraic group scheme $\G_\Z$ over $\Z$, with a split maximal torus such that the corresponding root system is isomorphic to $\Phi$.
We write $\Rep(\G)$ for the category of finite-dimensional $\G$-modules, in the sense of Section I.2.3 in \cite{Jantzen}.

In the quantum case, $\G = U_\zeta(\mathfrak{g})$ denotes the specialization at a complex $\ell$-th root of unity $\zeta \in \C$ of Lusztig's integral form (with divided powers) of the quantum group corresponding to the complex simple Lie algebra $\mathfrak{g}$ with root system $\Phi$, as in \cite[Section 1]{GruberLinkageTranslation} or \cite[Chapter II.H]{Jantzen}.
We write $\Rep(\G)$ for the category of finite-dimensional $\G$-modules of type $1$; see \cite[Section II.H.10]{Jantzen}.

In either case, $\Rep(\G)$ is a highest weight category with weight poset $(X^+,\leq)$, and we write
\[ L(\lambda) , \qquad \Delta(\lambda) , \qquad \nabla(\lambda) , \qquad T(\lambda) \]
for the simple $\G$-module, the Weyl module, the dual Weyl module and the indecomposable tilting module of highest weight $\lambda \in X^+$.
Furthermore, $\Rep(\G)$ is a rigid braided monoidal category, and the the tensor product of any two tilting $\G$-modules is a tilting $\G$-module \cite{DonkinGoodFiltration,MathieuGoodFiltration,ParadowskiGoodFiltration}.
The full subcategory of tilting $\G$-modules in $\Rep(\G)$ is denoted by $\Tilt(\G)$.
By \cite[Section II.2.12]{Jantzen} and \cite[Section 2.A]{AndersenStroppelTubbenhauer}, there is a character preserving duality functor $M \mapsto M^\tau$ on $\Rep(\G)$, and for $\lambda \in X^+$, we have $L(\lambda)^\tau \cong L(\lambda)$, $\Delta(\lambda)^\tau \cong \nabla(\lambda)$ and $T(\lambda)^\tau \cong T(\lambda)$.

\subsection{Linkage and translation} \label{subsec:linkagetranslation}

Let us write $W_\mathrm{aff} = \Z\Phi \rtimes W_\mathrm{fin}$ and $W_\mathrm{ext} = X \rtimes W_\mathrm{fin}$ for the affine Weyl group and the extended affine Weyl group of $\G$, respectively.
For $\gamma \in X$ and $w \in W_\mathrm{fin}$, the corresponding element of $W_\mathrm{ext}$ is denoted by $t_\gamma w$.
The $\ell$-dilated dot action of $W_\mathrm{ext}$ on $X_\R$ is defined by
\[ t_\gamma w \Cdot x = w(x+\rho)-\rho + \ell\gamma , \]
for $x \in X_\R$.
For $\beta \in \Phi^+$ and $r \in \Z$, the fixed points of the affine reflection $s_{\beta,r} = t_{r\beta} s_\beta$ with respect to the $\ell$-dilated dot action form an affine hyperplane $H_{\beta,r} = \{ x \in X_\R \mid (x+\rho,\beta^\vee) = \ell r \}$, and any connected component of $X_\R \setminus \big( \bigcup\nolimits_{\beta,r} H_{\beta,r} \big)$ is called an alcove.
The fundamental alcove is
\[ C_\mathrm{fund} \coloneqq \big\{ x \in X_\R \mathrel{\big|} 0 < (x+\rho,\beta^\vee) < \ell \text{ for all } \beta \in \Phi^+ \big\} . \]
Its closure $\overline{C}_\mathrm{fund}$ is a fundamental domain for the $\ell$-dilated dot action of $W_\mathrm{aff}$ on $X_\R$, and $\overline{C}_\mathrm{fund} \cap X$ is a fundamental domain for the action of $W_\mathrm{aff}$ on $X$.
A weight $\lambda \in X$ is called $\ell$-regular if it belongs to an alcove, and $\ell$-singular if it belongs to a reflection hyperplane.
Note that $\ell$-regular weights exist if and only if $\ell \geq (\rho,\alpha_\mathrm{h}^\vee) + 1 = h$ (the Coxeter number of $\G$), if and only if $0 \in C_\mathrm{fund}$.

The affine Weyl group $W_\mathrm{aff}$ is a Coxeter group with simple reflections $S = \{ s_\alpha \mid \alpha \in \Pi \} \sqcup \{ s_{\alpha_\mathrm{h},1} \}$.
For $w \in W_\mathrm{ext}$, we write $\ell(w)$ for the number of reflection hyperplanes separating $C_\mathrm{fund}$ and $w \Cdot C_\mathrm{fund}$.
Then the restriction to $W_\mathrm{aff}$ of $\ell \colon W_\mathrm{ext} \to \Z_{\geq 0}$ is the usual length function on $W_\mathrm{aff}$ with respect to $S$, and we have
\[ \Omega \coloneqq \{ w \in W_\mathrm{ext} \mid \ell(w) = 0 \} = \Stab_{W_\mathrm{ext}}(C_\mathrm{fund}) . \]
Since $W_\mathrm{aff}$ acts simply transitively on the set of alcoves, we also have a semidirect product decomposition $W_\mathrm{ext} = W_\mathrm{aff} \rtimes \Omega$, and $\Omega \cong W_\mathrm{ext} / W_\mathrm{aff} \cong X / \Z\Phi$.
For every element $w \in W_\mathrm{ext}$, the coset $W_\mathrm{fin} w$ has a unique element of minimal length, and we write
\[ W_\mathrm{ext}^+ \coloneqq \{ w \in W_\mathrm{ext} \mid w \text{ has minimal length in } W_\mathrm{fin} w \} \] and $W_\mathrm{aff}^+ = W_\mathrm{ext}^+ \cap W_\mathrm{aff}$.
If $\ell \geq h$ and $w \in W_\mathrm{ext}$ then $w \in W_\mathrm{ext}^+$ if and only if $w \Cdot 0 \in X^+$.

Suppose from now on that $\ell \geq h$.
For $\lambda \in \overline{C}_\mathrm{fund} \cap X$, the linkage class $\Rep_\lambda(\G)$ is the Serre subcategory of $\Rep(\G)$ generated by the simple $\G$-modules $L(\mu)$ with $\mu \in W_\mathrm{aff}\Cdot\lambda \cap X^+$.
The linkage principle (see Chapter II.6 in \cite{Jantzen}) asserts that there is a canonical decomposition
\[ \Rep(\G) = \bigoplus_{\lambda \in \overline{C}_\mathrm{fund} \cap X} \Rep_\lambda(\G) \]
with projection functors
\[ \pr_\lambda \colon \Rep(\G) \longrightarrow \Rep_\lambda(\G) . \]
The linkage class $\Rep_0(\G)$ is called the principal block of $\G$, and the extended principal block is
\[ \Rep_{\Omega\Cdot0}(\G) = \bigoplus_{\lambda \in \Omega\Cdot0} \Rep_\lambda(\G) . \]
Note that $\{ L(w \Cdot 0) \mid w \in W_\mathrm{aff}^+ \}$ is a set of representatives for the simple objects in $\Rep_0(\G)$. 
By the translation principle from Chapter II.7 in \cite{Jantzen}, there are translation functors
\[ T_\lambda^\mu = \pr_\mu\big( L(\nu) \otimes - \big)  \colon \Rep_\lambda(\G) \longrightarrow \Rep_\mu(\G) \]
for $\lambda,\mu \in \overline{C}_\mathrm{fund} \cap X$, where $\nu$ is the unique dominant weight in the $W_\mathrm{fin}$-orbit of $\mu-\lambda$.
If $\lambda$ and $\mu$ are $\ell$-regular then $T_\lambda^\mu$ is an equivalence with quasi-inverse $T_\mu^\lambda$, and for $w \in W_\mathrm{aff}^+$, we have
\[ T_\lambda^\mu L(w\Cdot\lambda) \cong L(w\Cdot\mu) , \quad T_\lambda^\mu \nabla(w\Cdot\lambda) \cong \nabla(w\Cdot\mu) , \quad T_\lambda^\mu \Delta(w\Cdot\lambda) \cong \Delta(w\Cdot\mu) , \quad T_\lambda^\mu T(w\Cdot\lambda) \cong T(w\Cdot\mu) . \]
For $\omega \in \Omega$, we define an auto-equivalence $T^\omega$ of $\Rep_{\Omega\Cdot0}(\G)$ by $T^\omega = \bigoplus\nolimits_{\lambda \in \Omega\Cdot0} T_{\lambda}^{\omega\Cdot\lambda}$.

\section{Minimal tilting complexes and singular \texorpdfstring{$\G$}{G}-modules} \label{sec:minimalcomplexessingularmodules}

The canonical functor $\mathfrak{T} \colon K^b\big( \Tilt(\G) \big) \longrightarrow D^b\big( \Rep(\G) \big)$ from the homotopy category of $\Tilt(\G)$ to the derived category of $\Rep(\G)$ is an equivalence of triangulated categories.
As explained in \cite{GruberMinimalTilting}, this allows us to define for every $\G$-module $M$ a \emph{minimal tilting complex} $C_\mathrm{min}(M)$.
Up to isomorphism of complexes, it is the unique minimal complex $C$ in $\Tilt(\G)$ with cohomology groups $H^0(C) \cong M$ and $H^i(C) = 0$ for $i \neq 0$.
The minimal tilting complex $C_\mathrm{min}(M)$ encodes a lot of important information about the $\G$-module $M$.
For instance, if $M$ is non-zero then two invariants called the \emph{good filtration dimension} and the \emph{Weyl filtration dimension} of $M$ can be determined as
\begin{equation} \label{eq:gfdwfd}
	\gfd(M) = \max\big\{ i \in \Z \mathrel{\big|} C_\mathrm{min}(M)_i \neq 0 \big\} , \qquad \qquad \wfd(M) = - \min\big\{ i \in \Z \mathrel{\big|} C_\mathrm{min}(M)_i \neq 0 \big\} ,
\end{equation}
respectively; see Lemma 2.15 in \cite{GruberMinimalTilting}.
The good filtration dimension of $L(x\Cdot\lambda)$ and of $\Delta(x\Cdot\lambda)$ is given by the length $\ell(x)$, for $x \in W_\mathrm{ext}^+$ and $\lambda \in C_\mathrm{fund} \cap X$, by \cite[Remark 4.2]{GruberLinkageTranslation} or \cite{ParkerGFD}.

A partial description of the minimal tilting complexes of Weyl modules and simple $\G$-modules of $\ell$-regular highest weight is given in Propositions 2.4 and 2.5 in \cite{GruberLinkageTranslation}; we (partially) recall these results here.
Recall that $W_\mathrm{ext} = W_\mathrm{aff} \rtimes \Omega$, and write $x \mapsto \omega_x$ for the canonical homomorphism $W_\mathrm{ext} \to \Omega$ with kernel $W_\mathrm{aff}$.
A tilting module $T \cong \bigoplus\nolimits_{\lambda \in X^+} T(\lambda)^{\oplus m_\lambda}$ is \emph{negligible} if $m_\lambda = 0$ for all $\lambda \in C_\mathrm{fund} \cap X$.

\begin{Proposition} \label{prop:minimalcomplexWeylmodule}
	Let $x\in W_\mathrm{ext}^+$ and $\lambda\in C_\mathrm{fund} \cap X$, and write $C_\mathrm{min}\big( \Delta(x\Cdot \lambda) \big)$ as
	\[ \cdots \xrightarrow{~d_{-2}~} T_{-1} \xrightarrow{~d_{-1}~} T_0 \xrightarrow{~\,d_0\,~} T_1 \xrightarrow{~\,d_1\,~} \cdots . \]
	Then
	\begin{enumerate}
		\item $T_i=0$ for all $i<0$ and all $i>\ell(x)$;
		\item $T_0 \cong T(x\Cdot \lambda)$ and $T_{\ell(x)} \cong T(\omega_x\Cdot\lambda)$;
		\item $T_i$ is negligible for all $i \neq \ell(x)$.
	\end{enumerate}
\end{Proposition}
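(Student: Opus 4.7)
The plan is to derive part (1) immediately from the dimension identities, identify $T_0$ via the embedding $\Delta(x\Cdot\lambda)\hookrightarrow T_0$ combined with minimality, and establish the remaining statements by induction on $\ell(x)$ using wall-crossing functors.

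Part (1) is immediate from \eqref{eq:gfdwfd}: we have $\wfd(\Delta(x\Cdot\lambda)) = 0$ because $\Delta(x\Cdot\lambda)$ is trivially Weyl-filtered, and $\gfd(\Delta(x\Cdot\lambda)) = \ell(x)$ by Parker's formula (recalled as Remark 4.2 of \cite{GruberLinkageTranslation}). For $T_0\cong T(x\Cdot\lambda)$ in part (2): since $T_{-1} = 0$, we obtain the embedding $\Delta(x\Cdot\lambda)= \ker(d_0)\hookrightarrow T_0$. The canonical Weyl filtration of the indecomposable tilting $T(x\Cdot\lambda)$ places $\Delta(x\Cdot\lambda)$ at the bottom with cokernel $T(x\Cdot\lambda)/\Delta(x\Cdot\lambda)$ Weyl-filtered by $\Delta(\nu)$ with $\nu<x\Cdot\lambda$; extending this cokernel tiltingly in higher degrees produces a tilting complex quasi-isomorphic to $\Delta(x\Cdot\lambda)$ whose degree-zero term is $T(x\Cdot\lambda)$. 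Uniqueness of the minimal complex then forces $T_0\cong T(x\Cdot\lambda)$, since any extra summand could be cancelled against a summand of $T_1$ (using the vanishing of $\mathrm{Ext}^1$ between tilting modules), contradicting minimality.

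The remaining assertions --- $T_{\ell(x)}\cong T(\omega_x\Cdot\lambda)$ and the negligibility of intermediate terms --- I would prove by induction on $\ell(x)$, using wall-crossing functors. The base case $\ell(x) = 0$ is immediate: then $x\in\Omega$, $x\Cdot\lambda = \omega_x\Cdot\lambda\in C_\mathrm{fund}\cap X$, and $\Delta(x\Cdot\lambda) = L(x\Cdot\lambda) = T(\omega_x\Cdot\lambda)$ is itself tilting. For $\ell(x)\geq 1$, pick a simple affine reflection $s$ with $\ell(sx) < \ell(x)$ and take the wall-crossing functor $\theta_s = T_\mu^\lambda T_\lambda^\mu$, where $\mu\in\overline{C}_\mathrm{fund}$ lies on the wall fixed by $s$. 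The translation principle produces a short exact sequence
\[ 0 \to \Delta(x\Cdot\lambda) \to \theta_s\Delta(sx\Cdot\lambda) \to \Delta(sx\Cdot\lambda) \to 0 , \]
equivalently a distinguished triangle in $D^b(\Rep(\G))$ presenting $\Delta(x\Cdot\lambda)$ (up to shift) as the cone of $\theta_s\Delta(sx\Cdot\lambda)\to\Delta(sx\Cdot\lambda)$. Since $\theta_s$ preserves tilting modules, applying $\theta_s$ to $C_\mathrm{min}(\Delta(sx\Cdot\lambda))$ and taking cones produces a tilting complex whose minimalization is $C_\mathrm{min}(\Delta(x\Cdot\lambda))$. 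The inductive hypothesis identifies the top term of $C_\mathrm{min}(\Delta(sx\Cdot\lambda))$ as $T(\omega_{sx}\Cdot\lambda) = T(\omega_x\Cdot\lambda)$ (the image in $\Omega\cong W_\mathrm{ext}/W_\mathrm{aff}$ is unchanged by multiplication by $s\in W_\mathrm{aff}$); explicit bookkeeping then shows that a single copy of $T(\omega_x\Cdot\lambda)$ survives in degree $\ell(x)$ after cancellation of contractible summands, while all intermediate terms remain negligible because $\theta_s$ sends negligible tiltings to negligible tiltings.

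The main technical obstacle is this bookkeeping in the inductive step: one must identify precisely which contractible summands arise in $\theta_s C_\mathrm{min}(\Delta(sx\Cdot\lambda))$ and in the cone construction, and verify that after cancelling them the resulting complex has exactly one copy of $T(\omega_x\Cdot\lambda)$ in the top degree and negligible intermediate terms. This depends on the decomposition of $\theta_s T(\mu)$ for indecomposable tiltings $T(\mu)$, which is governed by the alcove combinatorics underlying the representation theory of $\G$.
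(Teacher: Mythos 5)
The paper does not prove Proposition \ref{prop:minimalcomplexWeylmodule}: it is recalled from Propositions 2.4 and 2.5 of \cite{GruberLinkageTranslation}, so there is no internal proof to compare against. Your outline nevertheless follows the route one would expect, and that is presumably the one taken in \cite{GruberLinkageTranslation}: part (1) from $\wfd\big(\Delta(x\Cdot\lambda)\big)=0$ and $\gfd\big(\Delta(x\Cdot\lambda)\big)=\ell(x)$ via \eqref{eq:gfdwfd}, identification of $T_0$ with $T(x\Cdot\lambda)$, and a wall-crossing induction on $\ell(x)$ for the remaining two parts.

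However, as written the argument has real gaps. For $T_0\cong T(x\Cdot\lambda)$ the clean argument is to build \emph{some} tilting coresolution of $\Delta(x\Cdot\lambda)$ with $T(x\Cdot\lambda)$ in degree $0$, use that the minimal complex is a direct summand (as a complex) of any tilting complex quasi-isomorphic to $\Delta(x\Cdot\lambda)$, and conclude from $T_0\neq 0$ and indecomposability of $T(x\Cdot\lambda)$; your phrase about cancelling ``any extra summand against a summand of $T_1$'' gestures at this but is not itself a proof. More seriously, in the inductive step you acknowledge but leave undone the cone bookkeeping, and that is exactly where the content of parts (2) and (3) lies. To close it one must (a) show that the wall-crossing functor $\theta_s$ preserves negligibility (true, but it is a lemma, not a triviality), (b) observe that $\theta_s T(\omega_x\Cdot\lambda)$ is itself negligible since its highest weight lies one alcove above $C_\mathrm{fund}$, so the cone's degree-$(\ell(x){-}1)$ term is negligible, and (c) conclude from (b) that the copy of $T(\omega_x\Cdot\lambda)$ in degree $\ell(x)$ cannot be cancelled during minimalisation, since no isomorphic copy of it occurs in the adjacent degree. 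Finally, a convention issue: with the paper's identification $x\mapsto x\Cdot C_\mathrm{fund}$, alcoves sharing a wall are labelled by $x$ and $xs$ (\emph{right} multiplication by a simple reflection), whereas $sx\Cdot C_\mathrm{fund}=s\Cdot(x\Cdot C_\mathrm{fund})$ is in general not adjacent to $x\Cdot C_\mathrm{fund}$; the wall-crossing short exact sequence should therefore involve $xs$, not $sx$.
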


\begin{Proposition} \label{prop:minimalcomplexsimplemodule}
	Let $x\in W_\mathrm{ext}^+$ and $\lambda\in C_\mathrm{fund} \cap X$, and write $C_\mathrm{min}\big( L(x\Cdot \lambda) \big)$ as
	\[ \cdots \xrightarrow{~d_{-2}~} T_{-1} \xrightarrow{~d_{-1}~} T_0 \xrightarrow{~\,d_0\,~} T_1 \xrightarrow{~\,d_1\,~} \cdots . \]
	Then
	\begin{enumerate}
		\item $T_i \cong T_{-i}$ for all $i\in\Z$;
		\item $T_i=0$ for all $i\in\Z$ with $\abs{i}>\ell(x)$;
		\item $[T_0 : T(x\Cdot \lambda)]_\oplus = 1$ and $T_{\ell(x)} \cong T_{-\ell(x)} \cong T(\omega_x\Cdot\lambda)$;
		\item $T_{\ell(x)-1} \cong T_{1-\ell(x)}$ is negligible.
	\end{enumerate}
\end{Proposition}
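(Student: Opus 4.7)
My plan is to prove the four parts in sequence, using self-duality and the good filtration dimension to dispatch (1) and (2) quickly, then reducing (3) and (4) to computations involving the short exact sequences relating simple, Weyl, and dual Weyl modules, with Proposition \ref{prop:minimalcomplexWeylmodule} as the main input.

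For (1), I exploit the self-duality of $L(x \Cdot \lambda)$. The contravariant duality $\tau$ is an equivalence preserving $\Tilt(\G)$ (since $T(\mu)^\tau \cong T(\mu)$), so it sends minimal complexes to minimal complexes. Applying $\tau$ term-by-term to $C_\mathrm{min}\bigl( L(x \Cdot \lambda) \bigr)$ gives a minimal complex with reversed indexing whose degree-$0$ cohomology is $L(x \Cdot \lambda)^\tau \cong L(x \Cdot \lambda)$, and uniqueness of the minimal tilting complex yields $T_i \cong T_{-i}$. Part (2) is then immediate from \eqref{eq:gfdwfd}: the equality $\gfd L(x \Cdot \lambda) = \ell(x)$ recalled above forces $T_i = 0$ for $i > \ell(x)$, and part (1) extends this to $i < -\ell(x)$.

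For parts (3) and (4) I would compare $C_\mathrm{min}\bigl(L(x \Cdot \lambda)\bigr)$ with $C_\mathrm{min}\bigl(\Delta(x \Cdot \lambda)\bigr)$ via the short exact sequence $0 \to \rad \Delta(x \Cdot \lambda) \to \Delta(x \Cdot \lambda) \to L(x \Cdot \lambda) \to 0$, which becomes a distinguished triangle in $D^b\bigl(\Rep(\G)\bigr)$ and hence corresponds, under $\mathfrak{T}^{-1}$, to a triangle in $K^b\bigl(\Tilt(\G)\bigr)$. Since $\rad \Delta(x \Cdot \lambda)$ has composition factors only of the form $L(\mu)$ with $\mu < x \Cdot \lambda$, its minimal tilting complex involves only summands $T(\nu)$ with $\nu < x \Cdot \lambda$. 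Matching terms of maximal highest weight in the triangle therefore identifies the contribution of $T(x \Cdot \lambda)$ in $C_\mathrm{min}\bigl(L(x \Cdot \lambda)\bigr)$ with the degree-$0$ term of $C_\mathrm{min}\bigl(\Delta(x \Cdot \lambda)\bigr)$, giving $[T_0 : T(x \Cdot \lambda)]_\oplus = \dim \Hom\bigl(\Delta(x \Cdot \lambda), L(x \Cdot \lambda)\bigr) = 1$. Combining with the $\tau$-dual sequence $0 \to L(x \Cdot \lambda) \to \nabla(x \Cdot \lambda) \to \nabla(x \Cdot \lambda)/L(x \Cdot \lambda) \to 0$ and the top-degree term $T(\omega_x \Cdot \lambda)$ in Proposition \ref{prop:minimalcomplexWeylmodule} then yields $T_{\ell(x)} \cong T(\omega_x \Cdot \lambda)$.

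The real obstacle is (4), the negligibility of $T_{\ell(x)-1}$. My plan here is to apply a translation functor $T_0^\mu$ onto a wall of $\overline{C}_\mathrm{fund}$ chosen so that $\omega_x \Cdot 0$ lies on that wall but $x \Cdot 0$ does not; by the standard behaviour of translation on tilting modules, this annihilates the top term $T(\omega_x \Cdot \lambda)$ and truncates the complex. The good filtration dimension of the translated module drops by $1$, so via \eqref{eq:gfdwfd} the new top term in degree $\ell(x)-1$ must be non-zero, and this top term comes precisely from the non-negligible summands of the original $T_{\ell(x)-1}$. Comparing this with the explicit top term forced by the translated version of the argument used for (3) should pin down that the non-negligible part of $T_{\ell(x)-1}$ vanishes. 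Making the interaction between translation functors and minimal tilting complexes fully rigorous, in particular controlling the singular summands that translation kills, is where I expect the work to lie.
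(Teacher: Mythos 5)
Your arguments for parts (1)--(3) are sound: self-duality via $\tau$ gives the symmetry $T_i \cong T_{-i}$; together with $\gfd L(x\Cdot\lambda)=\ell(x)$ and \eqref{eq:gfdwfd} this gives (2); and the cone argument from the triangle $\rad\Delta(x\Cdot\lambda)\to\Delta(x\Cdot\lambda)\to L(x\Cdot\lambda)\to[1]$, combined with Proposition \ref{prop:minimalcomplexWeylmodule} and the fact that $C_\mathrm{min}\bigl(\rad\Delta(x\Cdot\lambda)\bigr)$ only involves $T(\nu)$ with $\nu<x\Cdot\lambda$, is enough to pin down both halves of (3).

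The proposed attack on (4) does not work, for a concrete geometric reason: $\omega_x\Cdot0$ lies in the \emph{open} alcove $C_\mathrm{fund}$ (since $\omega_x\in\Omega=\Stab_{W_\mathrm{ext}}(C_\mathrm{fund})$), so it is $\ell$-regular and is not contained in any wall of $\overline{C}_\mathrm{fund}$. Moreover, translation from an $\ell$-regular weight onto a wall $\mu\in\overline{C}_\mathrm{fund}$ does not annihilate a simple tilting module $T(\kappa)$ with $\kappa\in C_\mathrm{fund}$; one gets $T(\mu)\neq 0$. So the stated mechanism for "killing the top term" is not available, and the subsequent comparison cannot even be set up.

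The fix is that you already have the right tool in hand: the same triangle you use for (3) proves (4) directly. Writing $D$ for the cone of the induced chain map $C_\mathrm{min}\bigl(\rad\Delta(x\Cdot\lambda)\bigr)\to C_\mathrm{min}\bigl(\Delta(x\Cdot\lambda)\bigr)$, one has
\[
D_{\ell(x)-1} \;=\; C_\mathrm{min}\bigl(\Delta(x\Cdot\lambda)\bigr)_{\ell(x)-1} \,\oplus\, C_\mathrm{min}\bigl(\rad\Delta(x\Cdot\lambda)\bigr)_{\ell(x)} .
\]
The first summand is negligible by Proposition \ref{prop:minimalcomplexWeylmodule}(3). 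For the second: all composition factors of $\rad\Delta(x\Cdot\lambda)$ are of the form $L(y\Cdot\lambda)$ with $y\Cdot\lambda$ strictly below $x\Cdot\lambda$ in the strong linkage order, hence $\ell(y)<\ell(x)$; since $\gfd L(y\Cdot\lambda)=\ell(y)$, this forces $\gfd\bigl(\rad\Delta(x\Cdot\lambda)\bigr)\leq\ell(x)-1$, so $C_\mathrm{min}\bigl(\rad\Delta(x\Cdot\lambda)\bigr)_{\ell(x)}=0$. Thus $D_{\ell(x)-1}$ is negligible, and, negligible tilting modules being closed under retracts, so is $T_{\ell(x)-1}$ as a retract of $D_{\ell(x)-1}$.
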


The set of negligible tilting modules is stable under direct sums and retracts, and under tensor products with arbitrary tilting $\G$-modules (see \cite{GeorgievMathieuFusion,AndersenParadowskiFusionCategories,EtingofOstrikSemisimplification}), so it forms a \emph{thick tensor ideal} in $\Tilt(\G)$.
As in \cite{GruberLinkageTranslation}, we call a $\G$-module $M$ \emph{singular} if all terms of $C_\mathrm{min}(M)$ are negligible,
and \emph{regular} otherwise.
The singular $\G$-modules form a thick tensor ideal in $\Rep(\G)$ by Lemma 2.3 in \cite{GruberTensorIdeals}, that is, for $\G$-modules $M$ and $N$, we have
\begin{enumerate}
	\item $M \oplus N$ is singular if and only if $M$ and $N$ are both singular;
	\item if $M$ is singular then so is $M \otimes N$.
\end{enumerate}
Furthermore, a simple $\G$-module $L(\lambda)$ or Weyl module $\Delta(\lambda)$ of highest weight $\lambda \in X^+$ is regular if and only if $\lambda$ is $\ell$-regular; see Lemma 3.3 in \cite{GruberLinkageTranslation}.

Following Definition 3.10 in \cite{GruberLinkageTranslation}, every $\G$-module $M$ has a direct sum decomposition
\[ M \cong M_\mathrm{sing} \oplus M_\mathrm{reg} , \]
where $M_\mathrm{sing}$ is the direct sum of all singular indecomposable direct summands of $M$ and $M_\mathrm{reg}$ is the direct sum of all regular indecomposable direct summands of $M$, for a fixed Krull-Schmidt decomposition of $M$.
Note that $M_\mathrm{sing}$ and $M_\mathrm{reg}$ are determined uniquely up to isomorphism, but the direct sum decomposition is not canonical of functorial.
The following `linkage principle for tensor products' is Lemma 3.12 in \cite{GruberLinkageTranslation}.

\begin{Lemma} \label{lem:linkagetensor}
	Let $\lambda \in C_\mathrm{fund}$ and $\omega \in \Omega$, and let $M$ and $N$ be $\G$-modules in the linkage classes $\Rep_\lambda(\G)$ and $\Rep_{\omega\Cdot0}(\G)$, respectively.
	Then $(M \otimes N)_\mathrm{reg}$ belongs to $\Rep_{\omega\Cdot\lambda}(\G)$.
\end{Lemma}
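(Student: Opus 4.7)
The plan is to use the block decomposition $M\otimes N=\bigoplus_{\mu\in\overline{C}_\mathrm{fund}\cap X}\pr_\mu(M\otimes N)$ and show that $\pr_\mu(M\otimes N)$ is singular whenever $\mu\neq\omega\Cdot\lambda$, which forces $(M\otimes N)_\mathrm{reg}$ to sit inside $\Rep_{\omega\Cdot\lambda}(\G)$. The $\ell$-singular $\mu$ are immediate: for $\mu\in\overline{C}_\mathrm{fund}\setminus C_\mathrm{fund}$, every indecomposable tilting in $\Tilt_\mu(\G)$ is a $T(\sigma)$ with $\sigma\in W_\mathrm{aff}\Cdot\mu\cap X^+$ whose unique $\overline{C}_\mathrm{fund}$-representative is $\mu\notin C_\mathrm{fund}$, hence $T(\sigma)$ is negligible; so for any $X\in\Rep_\mu(\G)$ all terms of $C_\mathrm{min}(X)$ are negligible and $X$ is singular.

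For $\mu\in C_\mathrm{fund}\cap X$, a weight-theoretic argument restricts the $\Z\Phi$-coset. Weights of any composition factor $L(\sigma)$ of $M$ with $\sigma\in W_\mathrm{aff}\Cdot\lambda\cap X^+$ lie in $\sigma+\Z\Phi\subseteq\lambda+\Z\Phi$, since $W_\mathrm{aff}$ preserves $\Z\Phi$-cosets; so the weights of $M$ lie in $\lambda+\Z\Phi$, and analogously those of $N$ in $\omega\Cdot 0+\Z\Phi$. Writing $\omega=t_\gamma w$ with $w\in W_\mathrm{fin}$, one has $\omega\Cdot\lambda=w(\lambda)+\omega\Cdot 0$ and $w(\lambda)-\lambda\in\Z\Phi$, so the weights of $M\otimes N$ lie in $\omega\Cdot\lambda+\Z\Phi$. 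Since the weights of $\pr_\mu(M\otimes N)$ also lie in $\mu+\Z\Phi$, it follows that $\pr_\mu(M\otimes N)=0$ unless $\mu\equiv\omega\Cdot\lambda\pmod{\Z\Phi}$.

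The remaining case is $\mu\in C_\mathrm{fund}\cap X$ with $\mu\equiv\omega\Cdot\lambda\pmod{\Z\Phi}$ but $\mu\neq\omega\Cdot\lambda$; this is the technical heart, since $C_\mathrm{fund}\cap X$ can contain several weights in the same $\Z\Phi$-coset (e.g.\ $0$ and $2$ for $\G=\SL_2$, $\ell=5$). Here I would pass to the quasi-isomorphism $C_\mathrm{min}(M)\otimes C_\mathrm{min}(N)\simeq M\otimes N$ and reduce the problem to the fusion identity $[T(\nu)]\cdot[T(\omega\Cdot 0)]=[T(\omega\Cdot\nu)]$ for every $\nu\in C_\mathrm{fund}\cap X$, inside the Verlinde algebra $K^b(\Tilt(\G))$ modulo negligibles; this identity expresses that multiplication by $[T(\omega\Cdot 0)]$ realizes the $\Omega$-action on $C_\mathrm{fund}\cap X$. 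Granting it, every non-negligible tilting summand of a term $T(\sigma)\otimes T(\tau)$ appearing in $C_\mathrm{min}(M)\otimes C_\mathrm{min}(N)$ is forced to be a copy of $T(\omega\Cdot\lambda)$, and $\pr_\mu(M\otimes N)$ is singular for the offending $\mu$. I would prove the fusion identity by combining the translation equivalences $T_0^\nu\colon\Rep_0(\G)\xrightarrow{\sim}\Rep_\nu(\G)$ with the auto-equivalence $T^\omega$ of $\Rep_{\Omega\Cdot 0}(\G)$ recalled in Subsection~\ref{subsec:linkagetranslation}, which sends $T(\nu)$ to $T(\omega\Cdot\nu)$. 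The main obstacle is precisely this fusion identity: the weight-space argument is too coarse to distinguish linkage classes within a fixed $\Z\Phi$-coset, and the proof must exploit the tensor structure on $\Tilt(\G)$ together with the $\Omega$-equivariance of translation functors.
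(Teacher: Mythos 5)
Your block-by-block strategy is sound, and the first two steps hold up under scrutiny: if $\mu\in\overline{C}_\mathrm{fund}\cap X$ is $\ell$-singular, every indecomposable tilting module in $\Tilt_\mu(\G)$ has highest weight in $W_\mathrm{aff}\Cdot\mu$, which never meets $C_\mathrm{fund}$, so $C_\mathrm{min}$ of anything in $\Rep_\mu(\G)$ has only negligible terms; and the $\Z\Phi$-coset bookkeeping correctly shows that $\pr_\mu(M\otimes N)=0$ unless $\mu\equiv\omega\Cdot\lambda\pmod{\Z\Phi}$. You are also right that $C_\mathrm{fund}\cap X$ can contain distinct weights in the same $\Z\Phi$-coset (your $\SL_2$, $\ell=5$ example is correct), so that the weight argument is genuinely insufficient and something more is needed for $\ell$-regular $\mu$ with $\mu\equiv\omega\Cdot\lambda$.

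The gap is in your treatment of this third case. You reduce correctly to the fusion identity $[T(\lambda)]\cdot[T(\omega\Cdot 0)]=[T(\omega\Cdot\lambda)]$ in the Verlinde algebra, but the route you propose to establish it does not work. The auto-equivalence $T^\omega$ is by definition a translation functor, a direct summand of $\pr_{\omega\Cdot\lambda}\bigl(L(\nu_\lambda)\otimes -\bigr)$ where $\nu_\lambda$ is the dominant $W_\mathrm{fin}$-representative of $\omega\Cdot\lambda-\lambda$; knowing that $T^\omega T(\lambda)\cong T(\omega\Cdot\lambda)$ (which is the standard translation principle for tilting modules) is a statement about $L(\nu_\lambda)\otimes T(\lambda)$, not about $T(\omega\Cdot0)\otimes T(\lambda)$. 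These are different simple modules (already for $\lambda\neq 0$ one has $\nu_\lambda\neq\omega\Cdot0$), and there is no formal passage from the translation equivalence to multiplication by $[T(\omega\Cdot0)]$ in the fusion ring. In fact, the assertion that tensoring by $T(\omega\Cdot0)$ implements the $\Omega$-action on $C_\mathrm{fund}\cap X$ modulo negligibles \emph{is} the fusion identity, so appealing to ``$\Omega$-equivariance of translation functors'' to prove it is circular. Worse, inside the logical structure of the paper, the translation principle for tensor products (Theorem~\ref{thm:translationtensor}) and its corollary Lemma~\ref{lem:translationtensorfundamentalgroup} are themselves downstream of the lemma you are trying to prove, so you cannot invoke them either. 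The fusion identity is true — $\{T(\omega\Cdot0)\mid\omega\in\Omega\}$ are precisely the invertible objects of the Verlinde category, a fact available in \cite{GeorgievMathieuFusion} and \cite{AndersenParadowskiFusionCategories} — but it needs an independent proof or citation, not a deduction from the $\Omega$-action of translation functors. As it stands, the technical heart of your argument is missing.
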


As an immediate consequence of Lemma \ref{lem:linkagetensor}, we see that if $M$ and $N$ are $\G$-modules in $\Rep_0(\G)$ then so is $(M \otimes N)_\mathrm{reg}$, and similarly for $\Rep_{\Omega\Cdot0}(\G)$.
We also have a `translation principle for tensor products', see Theorem 3.14 in \cite{GruberLinkageTranslation}.

\begin{Theorem} \label{thm:translationtensor}
	Let $\lambda,\mu \in C_\mathrm{fund} \cap X$ and let $M$ and $N$ be $\G$-modules in $\Rep_0(\G)$.
	Then
	\[ ( T_0^\lambda M \otimes T_0^\mu N )_\mathrm{reg} \cong \bigoplus_{\nu \in C_\mathrm{fund} \cap X} T_0^\nu (M \otimes N)_\mathrm{reg}^{\oplus c_{\lambda,\mu}^\nu} , \]
	where $c_{\lambda,\mu}^\nu$ denotes the multiplicity of $T(\nu)$ in a Krull-Schmidt decomposition of $T(\lambda) \otimes T(\mu)$.%
	\footnote{The multiplicities $c_{\lambda,\mu}^\nu$ are the structure constants of the \emph{Verlinde algebra} of $\G$, i.e.\ the split Grothendieck group of the quotient category of $\Tilt(\G)$ by the tensor ideal of negligible tilting modules \cite{GeorgievMathieuFusion,AndersenParadowskiFusionCategories}.}
\end{Theorem}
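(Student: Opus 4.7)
My plan is to work modulo the tensor ideal of singular $\G$-modules and to pivot on the Krull--Schmidt decomposition of $L(\lambda) \otimes L(\mu)$ as a tilting module. Since $\lambda, \mu \in C_\mathrm{fund} \cap X$, we have $L(\lambda) = T(\lambda)$ and $L(\mu) = T(\mu)$, so their tensor product is a tilting module that decomposes as
\[ L(\lambda) \otimes L(\mu) \;\cong\; \bigoplus_{\nu} T(\nu)^{\oplus c_{\lambda,\mu}^\nu} . \]
Splitting off the summands with $\nu \in C_\mathrm{fund} \cap X$, for which $T(\nu) = L(\nu)$, from the remaining summands $T(\nu)$ with $\nu \notin C_\mathrm{fund}$ (which are negligible and hence singular), I obtain $L(\lambda) \otimes L(\mu) \cong \big( \bigoplus_{\nu \in C_\mathrm{fund} \cap X} L(\nu)^{\oplus c_{\lambda,\mu}^\nu} \big) \oplus S$ with $S$ singular.

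Next, I would show that $(T_0^\lambda M \otimes T_0^\mu N)_\mathrm{reg} \cong (L(\lambda) \otimes L(\mu) \otimes M \otimes N)_\mathrm{reg}$. To this end, decompose $L(\lambda) \otimes M = T_0^\lambda M \oplus X$ according to the linkage principle, where $X = \bigoplus_{\lambda' \neq \lambda} \pr_{\lambda'}(L(\lambda) \otimes M)$, and similarly $L(\mu) \otimes N = T_0^\mu N \oplus Y$. The ``off-diagonal'' summands involving $X$ or $Y$ should contribute only singular modules to the regular part of the full tensor product, since each block component of $X$ lies in $\Rep_{\lambda'}(\G)$ with $\lambda' \neq \lambda$ and the regular part of a tensor product with such a module is constrained in the appropriate block by Lemma \ref{lem:linkagetensor} (or a suitable iteration of it).

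Combining the previous two steps with the fact that singular modules form a tensor ideal, I arrive at
\[ (T_0^\lambda M \otimes T_0^\mu N)_\mathrm{reg} \;\cong\; \Big( \bigoplus_{\nu \in C_\mathrm{fund} \cap X} L(\nu)^{\oplus c_{\lambda,\mu}^\nu} \otimes (M \otimes N) \Big)_\mathrm{reg} . \]
For each $\nu \in C_\mathrm{fund} \cap X$, the module $L(\nu) \otimes (M \otimes N)$ projects onto $\Rep_\nu(\G)$ as $T_0^\nu(M \otimes N)$ by definition of the translation functor; contributions to other linkage classes are again singular by Lemma \ref{lem:linkagetensor}. Since $T_0^\nu \colon \Rep_0(\G) \to \Rep_\nu(\G)$ is an equivalence given by tensoring with a simple module followed by an exact block projection, it preserves regular and singular summands, so $T_0^\nu(M \otimes N)_\mathrm{reg} \cong T_0^\nu\big((M \otimes N)_\mathrm{reg}\big)$, and summing over $\nu$ produces the claimed isomorphism.

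The main obstacle I anticipate is the cross-term step. As stated, Lemma \ref{lem:linkagetensor} applies directly only when one tensor factor lies in a block $\Rep_{\omega \Cdot 0}(\G)$ for $\omega \in \Omega$, whereas the block summands of $X$ and $Y$ are indexed by arbitrary weights in $\overline{C}_\mathrm{fund} \cap X$. Overcoming this likely requires either a strengthened version of the linkage principle for tensor products, or a direct analysis of the minimal tilting complexes of the relevant cross-term tensor products verifying that they consist of negligible tilting modules. This is where the bulk of the technical effort should concentrate.
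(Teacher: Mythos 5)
The statement you are asked to prove is not actually proved in this paper: it is taken verbatim from Theorem~3.14 of \cite{GruberLinkageTranslation} and stated without proof. So there is no in-paper argument to compare against, and I will instead assess your blind attempt on its own terms.

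Your outline is essentially correct, and the obstacle you flag at the end is not a real one; it dissolves once Lemma~\ref{lem:linkagetensor} is applied to the \emph{right} pair of factors. You do not need to analyze the block summands of $X = \bigoplus_{\lambda' \ne \lambda} \pr_{\lambda'}\big(L(\lambda)\otimes M\big)$ one at a time. Apply Lemma~\ref{lem:linkagetensor} \emph{before} decomposing $L(\lambda)\otimes M$: take the factor in $\Rep_\lambda(\G)$ to be $L(\lambda)=T(\lambda)$ itself (which lies in $\Rep_\lambda(\G)$ precisely because $\lambda\in C_\mathrm{fund}\cap X$), and take the factor in $\Rep_{\omega\Cdot0}(\G)$ to be $M\in\Rep_0(\G)$ with $\omega=e$. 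The lemma then says directly that $\big(L(\lambda)\otimes M\big)_\mathrm{reg}$ belongs to $\Rep_\lambda(\G)$. Since $X\stackrel{\oplus}{\subseteq} L(\lambda)\otimes M$ is concentrated in blocks $\Rep_{\lambda'}(\G)$ with $\lambda'\ne\lambda$, this forces $X_\mathrm{reg}=0$, i.e.\ $X$ is singular. The same argument gives $Y$ singular, and singularity being a tensor ideal kills all cross-terms at once, giving your identity
$\big(T_0^\lambda M\otimes T_0^\mu N\big)_\mathrm{reg}\cong \big(L(\lambda)\otimes L(\mu)\otimes M\otimes N\big)_\mathrm{reg}$.
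For the final step, you again apply Lemma~\ref{lem:linkagetensor} with $L(\nu)\in\Rep_\nu(\G)$ and the $\Rep_0(\G)$-module $(M\otimes N)_\mathrm{reg}$ (using the consequence noted after the lemma that $(M\otimes N)_\mathrm{reg}\in\Rep_0(\G)$), so that $\big(L(\nu)\otimes (M\otimes N)_\mathrm{reg}\big)_\mathrm{reg}$ lands in $\Rep_\nu(\G)$ and hence coincides with $T_0^\nu(M\otimes N)_\mathrm{reg}$; here one also uses that a translation functor between two $\ell$-regular weights is an equivalence sending negligible tilting modules to negligible tilting modules and therefore preserves regularity. With these adjustments your proposal becomes a clean and essentially complete proof, and the ``strengthened linkage principle'' you thought you might need is just the lemma as already stated.
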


Recall the autoequivalences $T^\omega$ of $\Rep_{\Omega\Cdot0}(\G)$ introduced at the end of Section \ref{sec:preliminaries}.
As a special case of Theorem \ref{thm:translationtensor}, we get the following result (see Lemma 3.15 in \cite{GruberLinkageTranslation}):

\begin{Lemma} \label{lem:translationtensorfundamentalgroup}
	Let $M$ and $N$ be $\G$-modules in $\Rep_{\Omega\Cdot0}(\G)$ and let $\omega,\omega^\prime \in \Omega$.
	Then
	\[ \big( T^\omega M \otimes T^{\omega^\prime} N \big)_\mathrm{reg} \cong T^{\omega\omega^\prime} ( M \otimes N )_{\mathrm{reg}} . \]
\end{Lemma}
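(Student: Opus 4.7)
The strategy is to reduce the lemma to the translation principle for tensor products (Theorem~\ref{thm:translationtensor}) applied in the principal block, and then to identify the relevant Verlinde fusion coefficients. By additivity of tensor products, of the translation functors $T^\omega, T^{\omega'}$, and of the operation $(-)_\mathrm{reg}$, it suffices to treat the case $M \in \Rep_\lambda(\G)$ and $N \in \Rep_\mu(\G)$ with $\lambda = \omega_0 \Cdot 0$ and $\mu = \omega_1 \Cdot 0$ for some $\omega_0, \omega_1 \in \Omega$. Since $T_\lambda^0$ and $T_\mu^0$ are equivalences, I write $M \cong T_0^\lambda M_0$ and $N \cong T_0^\mu N_0$ with $M_0, N_0 \in \Rep_0(\G)$; composition of translation functors between $\ell$-regular blocks then gives $T^\omega M \cong T_0^{\omega\omega_0 \Cdot 0} M_0$ and $T^{\omega'} N \cong T_0^{\omega'\omega_1 \Cdot 0} N_0$.

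Next I apply Theorem~\ref{thm:translationtensor} to $M_0, N_0 \in \Rep_0(\G)$ to expand both sides of the claimed isomorphism as direct sums $\bigoplus_\nu T_0^\nu (M_0 \otimes N_0)_\mathrm{reg}^{\oplus c}$, where the exponents come from the fusion coefficients $c_{\omega\omega_0 \Cdot 0,\, \omega'\omega_1 \Cdot 0}^\nu$ on the left-hand side and $c_{\omega_0 \Cdot 0,\, \omega_1 \Cdot 0}^\nu$ on the right-hand side (the latter then being shifted by $T^{\omega\omega'}$, which sends $T_0^\nu$ to $T_0^{\omega\omega' \Cdot \nu}$). After reindexing, the two sides agree as soon as
\[ c_{\omega\omega_0 \Cdot 0,\, \omega'\omega_1 \Cdot 0}^{\nu} \;=\; c_{\omega_0 \Cdot 0,\, \omega_1 \Cdot 0}^{(\omega\omega')^{-1} \Cdot \nu} \]
for all $\omega, \omega', \omega_0, \omega_1 \in \Omega$ and $\nu \in C_\mathrm{fund} \cap X$. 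By commutativity of the Verlinde algebra, this $\Omega$-equivariance follows from the single structural identity
\[ T(\eta \Cdot 0) \otimes T(\xi \Cdot 0) \;\cong\; T((\eta\xi) \Cdot 0) \;\oplus\; (\text{negligible tilting module}) \qquad \text{for all } \eta, \xi \in \Omega . \]

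The main obstacle is proving this structural identity. Since $\eta \Cdot 0 \in C_\mathrm{fund} \cap X$, we have $T(\eta \Cdot 0) = L(\eta \Cdot 0)$, and Lemma~\ref{lem:linkagetensor} forces the regular part of $T(\eta \Cdot 0) \otimes T(\xi \Cdot 0)$ to lie in $\Rep_{(\eta\xi) \Cdot 0}(\G)$; since $T(\nu)$ with $\nu \in C_\mathrm{fund} \cap X$ lies in $\Rep_\nu(\G)$, this non-negligible part is of the form $T((\eta\xi) \Cdot 0)^{\oplus n_{\eta, \xi}}$ for some $n_{\eta, \xi} \geq 0$. I first handle the inverse-pair case $\xi = \eta^{-1}$: using the standard fact that $-w_0$ is a Dynkin diagram automorphism acting on $\Omega$ by inversion, one has $L(\eta \Cdot 0)^\tau \cong L(\eta^{-1} \Cdot 0)$, and the coevaluation $L(0) \hookrightarrow L(\eta \Cdot 0) \otimes L(\eta \Cdot 0)^\tau$ combined with $\dim \End_\G(L(\eta \Cdot 0)) = 1$ gives $n_{\eta, \eta^{-1}} = 1$. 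For general $(\eta, \xi)$, I compute the regular part of $T(\eta \Cdot 0) \otimes T(\xi \Cdot 0) \otimes T(\xi^{-1} \Cdot 0) \otimes T(\eta^{-1} \Cdot 0)$ by associating in two different ways: collapsing the middle pair $T(\xi \Cdot 0) \otimes T(\xi^{-1} \Cdot 0) = T(0) \oplus (\text{negligible})$ first (and then applying the inverse-pair identity to the outer factors) yields $T(0)$, whereas collapsing the left and right halves separately yields $T(0)^{n_{\eta, \xi} \cdot n_{\xi^{-1}, \eta^{-1}}}$ after a further application of the inverse-pair identity. Since the negligible tilting modules form a tensor ideal (so the two computations must agree on regular parts), we obtain $n_{\eta, \xi} \cdot n_{\xi^{-1}, \eta^{-1}} = 1$ and hence $n_{\eta, \xi} = 1$, completing the proof.
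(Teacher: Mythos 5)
The paper itself offers no proof for this lemma --- it records the statement as ``a special case of Theorem~\ref{thm:translationtensor}'' with a citation to Lemma~3.15 of \cite{GruberLinkageTranslation}, so I cannot compare your argument line-by-line against a printed proof. Your plan --- reduce to components $M\in\Rep_{\omega_0\Cdot0}(\G)$, $N\in\Rep_{\omega_1\Cdot0}(\G)$, pull $M,N$ back to $\Rep_0(\G)$ via the translation equivalences, expand both sides with Theorem~\ref{thm:translationtensor}, and reduce to the fusion rule
\[ T(\eta\Cdot0)\otimes T(\xi\Cdot0)\cong T\bigl((\eta\xi)\Cdot0\bigr)\oplus(\text{negligible}), \qquad \eta,\xi\in\Omega , \]
together with the resulting permutation $T(\omega\Cdot0)\otimes T(\nu)\equiv T(\omega\Cdot\nu)$ modulo negligibles --- is exactly the kind of unpacking one would expect behind that citation, and it is essentially sound.

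Two points need repair. First, a notational slip that actually matters here: you write $L(\eta\Cdot0)^\tau\cong L(\eta^{-1}\Cdot0)$, but in this paper $M\mapsto M^\tau$ is the \emph{character-preserving} duality, so $L(\lambda)^\tau\cong L(\lambda)$ for all $\lambda$. The duality you need is the rigid (vector-space) dual $M\mapsto M^*$, for which $L(\lambda)^*\cong L(-w_0\lambda)$, and then indeed $L(\eta\Cdot0)^*\cong L(\eta^{-1}\Cdot0)$ because $-w_0$ acts on $\Omega\cong X/\Z\Phi$ by inversion. Second, and more substantively, the inverse-pair case $n_{\eta,\eta^{-1}}=1$ is not yet complete as you have written it. The inequality $n_{\eta,\eta^{-1}}\leq1$ does follow from $\dim\Hom_\G\bigl(L(0),L(\eta\Cdot0)\otimes L(\eta\Cdot0)^*\bigr)=\dim\End_\G\bigl(L(\eta\Cdot0)\bigr)=1$, but the coevaluation being a monomorphism does not by itself show it splits, so $n_{\eta,\eta^{-1}}\geq1$ still needs an argument. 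One clean fix uses the fact that singular modules form a tensor ideal (the result quoted from \cite{GruberTensorIdeals}): since $L(\eta\Cdot0)$ is a direct summand of $L(\eta\Cdot0)\otimes L(\eta\Cdot0)^*\otimes L(\eta\Cdot0)$ (rigidity) and $L(\eta\Cdot0)$ is regular by \cite[Lemma~3.3]{GruberLinkageTranslation}, the module $L(\eta\Cdot0)\otimes L(\eta\Cdot0)^*$ cannot be singular; by your linkage argument its regular part is $T(0)^{\oplus n}$ with $n\geq1$, hence $n=1$. With these amendments and a short argument that tensoring with the invertible $T(\omega\Cdot0)$ sends $T(\nu)$ to $T(\omega\Cdot\nu)$ modulo negligibles (which follows by combining the inverse-pair identity with Lemma~\ref{lem:linkagetensor} via your four-fold associativity trick), the proof goes through.
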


\section{Generic direct summands} \label{sec:genericdirectsummands}

In this section, we study the regular parts of tensor products of Weyl modules and simple $\G$-modules.
Recall that we write $x \mapsto \omega_x$ for the canonical homomorphism $W_\mathrm{ext} \to \Omega$ with kernel $W_\mathrm{aff}$.

\begin{Theorem} \label{thm:genericdirectsummandWeylmodule}
	Let $x,y\in W_\mathrm{ext}^+$. Then the tensor product $\Delta(x\Cdot0) \otimes \Delta(y\Cdot0)$ has a unique regular indecomposable direct summand $G_\Delta(x,y)$.
\end{Theorem}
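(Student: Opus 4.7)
The plan is to analyse the minimal tilting complex of $M := \Delta(x\Cdot 0)\otimes\Delta(y\Cdot 0)$ via the tensor product
\[ \widetilde C \,:=\, C_\mathrm{min}(\Delta(x\Cdot 0))\otimes C_\mathrm{min}(\Delta(y\Cdot 0)) , \]
which is a bounded complex in $\Tilt(\G)$ representing $M$ in $D^b(\Rep(\G))$. First I would invoke Proposition \ref{prop:minimalcomplexWeylmodule} to identify the non-negligible tilting summands of each factor: $T(x\Cdot 0)$ at degree $0$ and $T(\omega_x\Cdot 0)$ at degree $\ell(x)$ for $\Delta(x\Cdot 0)$, and analogously $T(y\Cdot 0)$ and $T(\omega_y\Cdot 0)$ for $\Delta(y\Cdot 0)$. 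Since the negligible tilting modules form a tensor ideal, the non-negligible part of $\widetilde C$ is supported only at the four total degrees $0,\ell(x),\ell(y),\ell(x)+\ell(y)$.

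The crucial second step is to focus on the top non-negligible term, at degree $\ell(x)+\ell(y)$, which equals $T(\omega_x\Cdot 0)\otimes T(\omega_y\Cdot 0)\cong T^{\omega_x}\kk\otimes T^{\omega_y}\kk$. By Lemma \ref{lem:translationtensorfundamentalgroup}, its regular part is the single indecomposable tilting module $T(\omega_x\omega_y\Cdot 0)$. Since the degree $\ell(x)+\ell(y)-1$ part of $\widetilde C$ is entirely negligible, this non-negligible summand cannot be cancelled when stripping contractible direct summands to pass from $\widetilde C$ to $C_\mathrm{min}(M)$; consequently $T(\omega_x\omega_y\Cdot 0)$ appears at the top degree of $C_\mathrm{min}(M)$ with multiplicity one, and no other non-negligible summand appears there.

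Decomposing $M = \bigoplus_i N_i$ into indecomposable $\G$-modules gives $C_\mathrm{min}(M) = \bigoplus_i C_\mathrm{min}(N_i)$, and the unique copy of $T(\omega_x\omega_y\Cdot 0)$ at the top degree lies in exactly one $C_\mathrm{min}(N_i)$; denote the corresponding summand $G_\Delta(x,y)$. Since $C_\mathrm{min}(G_\Delta(x,y))$ contains a non-negligible term, $G_\Delta(x,y)$ is regular, which establishes existence.

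The main obstacle will be to prove uniqueness, ruling out any further regular $N_i$. My plan here is to argue by induction on $\ell(x)+\ell(y)$. The base case $\ell(x)=\ell(y)=0$ is immediate from Lemma \ref{lem:translationtensorfundamentalgroup}: then $\Delta(x\Cdot 0)\otimes\Delta(y\Cdot 0) = T(x\Cdot 0)\otimes T(y\Cdot 0)$ has regular part the single indecomposable $T(\omega_x\omega_y\Cdot 0)$. For the inductive step I would translate onto a suitably chosen wall of the alcove $x\Cdot C_\mathrm{fund}$, using Theorem \ref{thm:translationtensor} to transport the regular part across this reduction to a tensor product of smaller total length covered by the inductive hypothesis. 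The hardest point will be to verify that translation functors do not split the regular indecomposable summand produced by the inductive hypothesis into several pieces; this amounts to a careful analysis of the non-negligible contributions at each of the four relevant degrees in $C_\mathrm{min}(M)$ and of the way they are linked by the differentials of $C_\mathrm{min}(M)$.
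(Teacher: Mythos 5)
Your overall strategy — analyzing $\widetilde C = C_\mathrm{min}(\Delta(x\Cdot 0))\otimes C_\mathrm{min}(\Delta(y\Cdot 0))$ and passing to the minimal tilting complex — is exactly the paper's, and the existence part is sound. But you have misread Proposition~\ref{prop:minimalcomplexWeylmodule}, and this misreading is precisely what turns uniqueness into an apparent ``main obstacle'' when in fact it follows immediately.

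Part (3) of Proposition~\ref{prop:minimalcomplexWeylmodule} says that $T_i$ is \emph{negligible} for every $i\neq\ell(x)$; in particular $T_0\cong T(x\Cdot 0)$ \emph{is} negligible whenever $\ell(x)\geq 1$, because $x\Cdot 0\notin C_\mathrm{fund}$. You instead list $T(x\Cdot 0)$ at degree $0$ as a non-negligible summand, and consequently you claim that non-negligible summands of $\widetilde C$ can occur at the four total degrees $0,\ell(x),\ell(y),\ell(x)+\ell(y)$. That is wrong: since negligible tilting modules form a tensor ideal, the only total degree at which a non-negligible summand can occur is $\ell(x)+\ell(y)$, where both tensor factors contribute their unique non-negligible term. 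Once this is seen, uniqueness is automatic: in a Krull--Schmidt decomposition $\Delta(x\Cdot 0)\otimes\Delta(y\Cdot 0)\cong M_1\oplus\cdots\oplus M_r$, we have $C_\mathrm{min}(M_1)\oplus\cdots\oplus C_\mathrm{min}(M_r)$, whose terms in every degree $< \ell(x)+\ell(y)$ are negligible and which has a single non-negligible indecomposable summand at degree $\ell(x)+\ell(y)$. Thus exactly one $M_k$ has a non-negligible term in its minimal tilting complex; all other $M_i$ are singular by definition. Your proposed induction on $\ell(x)+\ell(y)$ via wall-crossing, which you yourself flag as unverified and delicate (whether translation might split the inductive summand), is therefore not needed — and as written it does not constitute a proof, since you do not resolve the indecomposability issue you identify. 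The fix is simply to use item (3) of the proposition as stated.
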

\begin{proof}
	First suppose that $x,y \in W_\mathrm{aff}^+$.
	By Lemma 1.3 in \cite{GruberTensorIdeals} and its proof, the minimail tilting complex $C_\mathrm{min}\big( \Delta(x\Cdot 0) \otimes \Delta(y\Cdot 0) \big)$ is the minimal complex of $C_\mathrm{min}\big( \Delta(x\Cdot 0) \big) \otimes C_\mathrm{min}\big( \Delta(y\Cdot 0) \big)$, and there are split embeddings
	\[ C_\mathrm{min}\big( \Delta(x\Cdot 0) \otimes \Delta(y\Cdot 0) \big)_i \stackrel{\oplus}{\subseteq} \bigoplus_{j+k=i} C_\mathrm{min}\big( \Delta(x\Cdot0) \big)_j \otimes C_\mathrm{min}\big( \Delta(y\Cdot0) \big)_k \]
	for all $i \in \Z$.
	Recall from Proposition \ref{prop:minimalcomplexWeylmodule} that the terms $C_\mathrm{min}\big( \Delta(x\Cdot 0) \big)_j$ of the minimal tilting complex of $\Delta(x\Cdot0)$ are negligible for $j<\ell(x)$ and zero for $j>\ell(x)$, and similarly $C_\mathrm{min}\big( \Delta(y\Cdot0) \big)_k$ is negligible for $k<\ell(y)$ and zero for $k>\ell(y)$.
	Furthermore, we have
	\[ C_\mathrm{min}\big( \Delta(x\Cdot 0) \big)_{\ell(x)} \cong T(0) \qquad \text{and} \qquad C_\mathrm{min}\big( \Delta(y\Cdot0) \big)_{\ell(y)} \cong T(0) . \]
	Combining the above observations, we see that the terms $C_\mathrm{min}\big( \Delta(x\Cdot 0) \otimes \Delta(y\Cdot 0) \big)_i$ of the minimal tilting complex of $\Delta(x\Cdot0) \otimes \Delta(y\Cdot0)$ are negligible for $i<\ell(x)+\ell(y)$ and zero for $i>\ell(x)+\ell(y)$.
	The term in degree $\ell(x)+\ell(y)$ is a direct summand of the tensor product
	\[ C_\mathrm{min}\big( \Delta(x\Cdot 0) \big)_{\ell(x)} \otimes C_\mathrm{min}\big( \Delta(y\Cdot 0) \big)_{\ell(y)} \cong T(0) \otimes T(0) \cong T(0) . \]
	As the terms of the tensor product complex $C_\mathrm{min}\big( \Delta(x\Cdot 0) \big) \otimes C_\mathrm{min}\big( \Delta(y\Cdot 0) \big)$ in degrees $\ell(x)+\ell(y)-1$ and $\ell(x)+\ell(y)+1$ are negligible or zero, respectively, Corollary 2.8 in \cite{GruberMinimalTilting} implies that
	\[ C_\mathrm{min}\big( \Delta(x\Cdot0) \otimes \Delta(y\Cdot0) \big)_{\ell(x)+\ell(y)} \cong T(0) . \]
	Now fix a Krull-Schmidt decomposition
	\[ \Delta(x\Cdot 0) \otimes \Delta(y\Cdot 0) \cong M_1 \oplus \cdots \oplus M_r \]
	and note that
	\[ C_\mathrm{min}\big( \Delta(x\Cdot 0) \otimes \Delta(y\Cdot 0) \big) \cong C_\mathrm{min}(M_1) \oplus \cdots \oplus C_\mathrm{min}(M_r) . \]
	It follows that there exists a unique $k \in \{ 1,\ldots,r \}$ with $C_\mathrm{min}(M_k)_{\ell(x)+\ell(y)} \cong T(0)$, and all of the terms $C_\mathrm{min}(M_i)_j$ of the minimal tilting complexes $C_\mathrm{min}(M_i)$ are negligible for $i \neq k$ (or $i=k$ and $j<\ell(x)+\ell(y)$) and zero for $j>\ell(x)+\ell(y)$ (or $j \geq \ell(x)+\ell(y)$ and $i \neq k$).
	In particular, $M_k$ is the unique regular indecomposable direct summand of $\Delta(x\Cdot0) \otimes \Delta(y\Cdot0)$,
	and $\gfd(M_k) = \ell(x)+\ell(y)$ by Equation \eqref{eq:gfdwfd}.
	
	For arbitrary $x,y \in W_\mathrm{ext}^+$, we can write $x = x_0 \omega_x$ and $y = y_0 \omega_y$ with $x_0,y_0 \in W_\mathrm{aff}^+$, and then
	\[ \Delta(x\Cdot0) \cong T^{\omega_x} \Delta(x_0\Cdot0) \qquad \text{and} \qquad \Delta(y\Cdot0) \cong T^{\omega_y} \Delta(y_0\Cdot0) . \]
	By Lemma \ref{lem:translationtensorfundamentalgroup}, we have
	\begin{multline*}
	\qquad \big( \Delta(x\Cdot0) \otimes \Delta(y\Cdot0) \big)_\mathrm{reg} \cong \big( T^{\omega_x} \Delta(x_0\Cdot0) \otimes T^{\omega_y} \Delta(y_0\Cdot0) \big)_\mathrm{reg} \\ \cong T^{\omega_{xy}} \big( \Delta(x_0\Cdot0) \otimes \Delta(y_0\Cdot0) \big)_\mathrm{reg} \cong T^{\omega_{xy}} G_\Delta(x_0,y_0) , \qquad
	\end{multline*}
	and the claim follows with $G_\Delta(x,y) = T^{\omega_{xy}} G_\Delta(x_0,y_0)$.
\end{proof}

\begin{Remark} \label{rem:genericdirectsummandWeylmodulelastterm}
	Observe that for $x,y \in W_\mathrm{aff}^+$, the proof of Theorem \ref{thm:genericdirectsummandWeylmodule} implies that
	\begin{equation} \label{eq:minimaltiltingcomplexgenericdirectsummandWeyllastterm}
	C_\mathrm{min}\big( G_\Delta(x,y) \big)_{\ell(x)+\ell(y)} \cong T(0) ,
	\end{equation}
	and that $G_\Delta(x,y)$ has good filtration dimension $\ell(x) + \ell(y)$ by \eqref{eq:gfdwfd}.
	In fact, $G_\Delta(x,y)$ is the unique indecomposable direct summand of $\Delta(x\Cdot0) \otimes \Delta(y\Cdot0)$ with good filtration dimension $\ell(x)+\ell(y)$.
	Furthermore, $G_\Delta(x,y)$ belongs to $\Rep_0(\G)$, either by \eqref{eq:minimaltiltingcomplexgenericdirectsummandWeyllastterm} or by Lemma \ref{lem:linkagetensor}.

	For $x,y\in W_\mathrm{ext}^+$, the proof of Theorem \ref{thm:genericdirectsummandWeylmodule} and the above observations imply that
	\[ C_\mathrm{min}\big( G_\Delta(x,y) \big)_{\ell(x)+\ell(y)} \cong T(\omega_{xy}\Cdot0) , \]
	and that $G_\Delta(x,y)$ has good filtration dimension $\ell(x)+\ell(y)$ and belongs to $\Rep_{\omega_{xy}\Cdot0}(\G)$.
\end{Remark}

\begin{Remark} \label{rem:genericdirectsummandinducedmodule}
	For $x,y\in W_\mathrm{ext}^+$, one can show as in Theorem \ref{thm:genericdirectsummandWeylmodule} that $\nabla(x\Cdot0) \otimes \nabla(y\Cdot0)$ has a unique regular indecomposable direct summand $G_\nabla(x,y)$.
	Furthermore, $G_\nabla(x,y)$ has Weyl filtration dimension $\ell(x)+\ell(y)$
	% satisfies
	%\[ \wfd\big( G_\nabla(x,y) \big) = \ell(x)+\ell(y) \]
	and belongs to the linkage class of $\omega_{xy}\Cdot0$.
	In the following, we will mostly restrict our attention to only one of the classes of modules $G_\Delta(x,y)$ or $G_\nabla(x,y)$, which is justified by the fact that
	$ G_\nabla(x,y) \cong G_\Delta(x,y)^\tau . $ Indeed, $G_\Delta(x,y)^\tau$ is a direct summand of
	\[ \big( \Delta(x\Cdot0) \otimes \Delta(y\Cdot0) \big)^\tau \cong \nabla(x\Cdot0) \otimes \nabla(y\Cdot0) , \]
	and $G_\Delta(x,y)^\tau$ is regular since $C_\mathrm{min}\big( G_\Delta(x,y)^\tau \big)_i \cong C_\mathrm{min}\big( G_\Delta(x,y) \big)_{-i}$ for all $i\in\Z$.
\end{Remark}

\begin{Definition}
	For $x,y\in W_\mathrm{ext}^+$, we call the $\G$-module $G_\Delta(x,y)$ from Theorem~\ref{thm:genericdirectsummandWeylmodule} the \emph{generic direct summand} of $\Delta(x\Cdot0) \otimes \Delta(y\Cdot0)$.
	Analogously, we call the $\G$-module $G_\nabla(x,y)$ from Remark \ref{rem:genericdirectsummandinducedmodule} the \emph{generic direct summand} of $\nabla(x\Cdot0) \otimes \nabla(y\Cdot0)$.
\end{Definition}

\begin{Remark} \label{rem:genericdirectsummand}
	The term \emph{generic direct summand} is justified by the fact that translates of $G_\Delta(x,y)$ appear \emph{generically} in Krull-Schmidt decompositions of tensor products of Weyl modules with highest weights in the alcoves $x\Cdot C_\mathrm{fund}$ and $y\Cdot C_\mathrm{fund}$.
	Indeed, for $\lambda,\mu \in C_\mathrm{fund}$ and $x,y\in W_\mathrm{aff}^+$, we have
	\begin{align*}
	\big( \Delta(x\Cdot\lambda) \otimes \Delta(y\Cdot\mu) \big)_\mathrm{reg} & \cong \big( T_0^\lambda \Delta(x\Cdot0) \otimes T_0^\mu \Delta(y\Cdot0) \big)_\mathrm{reg} \\
	& \cong \bigoplus_{\nu \in C_\mathrm{fund} \cap X} T_0^\nu \big( \Delta(x\Cdot0) \otimes \Delta(y\Cdot0) \big)_\mathrm{reg}^{\oplus c_{\lambda,\mu}^\nu} \\
	& \cong \bigoplus_{\nu \in C_\mathrm{fund} \cap X} T_0^\nu G_\Delta(x,y) ^{\oplus c_{\lambda,\mu}^\nu}
	\end{align*}
	by Theorems \ref{thm:translationtensor} and \ref{thm:genericdirectsummandWeylmodule}.
\end{Remark}

The following elementary lemma is an immediate consequence of Lemma \ref{lem:translationtensorfundamentalgroup}.

\begin{Lemma} \label{lem:genericdirectsummandWeylmodulefundamentalgroup}
	Let $x,y\in W_\mathrm{ext}^+$ and $\omega , \omega^\prime \in \Omega$. Then
	\[ G_\Delta(x\omega,y\omega^\prime) \cong T^{\omega \omega^\prime} G_\Delta(x,y) \qquad \text{and} \qquad G_\nabla(x\omega,y\omega^\prime) \cong T^{\omega \omega^\prime} G_\nabla(x,y) . \]
\end{Lemma}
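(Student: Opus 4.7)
The plan is to reduce everything to a direct application of Lemma \ref{lem:translationtensorfundamentalgroup}, using the identity $\Delta(x\omega\Cdot0) \cong T^\omega \Delta(x\Cdot0)$ that is already implicit in the proof of Theorem \ref{thm:genericdirectsummandWeylmodule}. The only thing to check carefully is that the two sides of the claimed isomorphism both arise as regular parts of tensor products that match up under the autoequivalence $T^{\omega\omega'}$.

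First I would write $x = x_0 \omega_x$ and $y = y_0 \omega_y$ with $x_0, y_0 \in W_\mathrm{aff}^+$, so that $x\omega = x_0(\omega_x \omega)$ and $y\omega' = y_0(\omega_y \omega')$ have the same $W_\mathrm{aff}^+$-parts as $x$ and $y$, respectively. As recorded in the proof of Theorem \ref{thm:genericdirectsummandWeylmodule}, this gives
\[ \Delta(x\omega\Cdot0) \cong T^{\omega_x\omega}\Delta(x_0\Cdot0) \cong T^\omega \Delta(x\Cdot0), \qquad \Delta(y\omega'\Cdot0) \cong T^{\omega'}\Delta(y\Cdot0). \]
Applying Lemma \ref{lem:translationtensorfundamentalgroup} to the tensor product of these two $\G$-modules in $\Rep_{\Omega\Cdot0}(\G)$ then yields
\[ \big(\Delta(x\omega\Cdot0) \otimes \Delta(y\omega'\Cdot0)\big)_\mathrm{reg} \cong T^{\omega\omega'}\big(\Delta(x\Cdot0) \otimes \Delta(y\Cdot0)\big)_\mathrm{reg} \cong T^{\omega\omega'} G_\Delta(x,y), \]
where the last isomorphism uses Theorem \ref{thm:genericdirectsummandWeylmodule}. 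Since $T^{\omega\omega'}$ is an autoequivalence of $\Rep_{\Omega\Cdot0}(\G)$, it preserves indecomposability, so the right-hand side is indecomposable; by the uniqueness part of Theorem \ref{thm:genericdirectsummandWeylmodule} applied to the pair $(x\omega, y\omega')$, it must equal $G_\Delta(x\omega, y\omega')$.

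For the statement about $G_\nabla$, I would simply invoke the duality identity $G_\nabla(x,y) \cong G_\Delta(x,y)^\tau$ from Remark \ref{rem:genericdirectsummandinducedmodule} together with the fact that the duality $M \mapsto M^\tau$ commutes with translation functors (hence with the $T^\omega$), or alternatively rerun the same three-line argument with $\nabla$ in place of $\Delta$. There is no real obstacle here; the only thing one has to be mindful of is not confusing the two roles of $\omega_x$ (fundamental-group component of $x$) versus an arbitrary $\omega \in \Omega$ multiplying $x$ on the right, which is why I would be explicit about the decomposition $x = x_0 \omega_x$ at the beginning.
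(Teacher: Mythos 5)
Your proof is correct and follows the same route as the paper: the key step is applying Lemma~\ref{lem:translationtensorfundamentalgroup} to the identity $\Delta(x\omega\Cdot0) \cong T^\omega\Delta(x\Cdot0)$ (and its $y$-analogue), then identifying the result with $G_\Delta(x\omega,y\omega')$ via Theorem~\ref{thm:genericdirectsummandWeylmodule}. The paper does not bother to spell out the intermediate factorization $x=x_0\omega_x$ at this stage, but that is a stylistic choice, not a substantive difference, and the closing remark about $G_\nabla$ (either by duality or by re-running the argument with $\nabla$) matches the paper's ``proven analogously.''
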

\begin{proof}
	First note that $\Delta(x\omega\Cdot0) \cong T^\omega \Delta(x\Cdot0)$ and $\Delta(y\omega^\prime\Cdot0) \cong T^{\omega^\prime} \Delta(y\Cdot0)$.
	By Theorem \ref{thm:genericdirectsummandWeylmodule} and Lemma \ref{lem:translationtensorfundamentalgroup}, we have
	\begin{align*}
		G_\Delta(x\omega,y\omega^\prime) & \cong \big( \Delta(x\omega\Cdot0) \otimes \Delta(y\omega^\prime\Cdot0) \big)_\mathrm{reg} \\
		& \cong \big( T^\omega \Delta(x\Cdot0) \otimes T^{\omega^\prime} \Delta(y\Cdot0) \big)_\mathrm{reg} \\
		& \cong T^{\omega\omega^\prime} \big( \Delta(x\Cdot0) \otimes \Delta(y\Cdot0) \big)_\mathrm{reg} \\
		& \cong T^{\omega\omega^\prime} G_\Delta(x,y) ,
	\end{align*}
	as claimed.
	The isomorphism $G_\nabla(x\omega,y\omega^\prime) \cong T^{\omega \omega^\prime} G_\nabla(x,y)$ can be proven analogously.
\end{proof}

The regular indecomposable direct summands of tensor products of simple $\G$-modules in the extended principal block are in general not unique.
To get uniqueness, we need to impose a condition on the good filtration dimension.

\begin{Theorem} \label{thm:genericdirectsummandsimplemodule}
	Let $x,y\in W_\mathrm{ext}^+$. Then the tensor product $L(x\Cdot0) \otimes L(y\Cdot0)$ has a unique regular indecomposable direct summand $G(x,y)$ with $\gfd\big( G(x,y) \big) = \ell(x)+\ell(y)$.
\end{Theorem}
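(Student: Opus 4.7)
The plan is to closely mirror the proof of Theorem \ref{thm:genericdirectsummandWeylmodule}, substituting Proposition \ref{prop:minimalcomplexsimplemodule} in place of Proposition \ref{prop:minimalcomplexWeylmodule}. I would first reduce to the case $x,y \in W_\mathrm{aff}^+$: writing $x = x_0 \omega_x$ and $y = y_0 \omega_y$ with $x_0,y_0 \in W_\mathrm{aff}^+$ and $\omega_x,\omega_y \in \Omega$ gives $\ell(x) = \ell(x_0)$ and $\ell(y) = \ell(y_0)$, and Lemma \ref{lem:translationtensorfundamentalgroup} yields the isomorphism $\big( L(x\Cdot0) \otimes L(y\Cdot0) \big)_\mathrm{reg} \cong T^{\omega_x\omega_y}\big( L(x_0\Cdot0) \otimes L(y_0\Cdot0) \big)_\mathrm{reg}$. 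Since $T^\omega$ is an equivalence sending tilting modules to tilting modules, it preserves minimal tilting complexes (up to a weight shift) and hence good filtration dimension, so a unique regular indecomposable summand of $L(x_0\Cdot0) \otimes L(y_0\Cdot0)$ with $\gfd = \ell(x_0)+\ell(y_0)$ transports to a unique such summand of $L(x\Cdot0) \otimes L(y\Cdot0)$, and one sets $G(x,y) \coloneqq T^{\omega_x\omega_y} G(x_0,y_0)$.

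Assuming from now on that $x,y \in W_\mathrm{aff}^+$, the heart of the argument is the computation of the top degree of $C_\mathrm{min}\big( L(x\Cdot0) \otimes L(y\Cdot0) \big)$. As in the proof of Theorem \ref{thm:genericdirectsummandWeylmodule}, Lemma 1.3 of \cite{GruberTensorIdeals} gives split embeddings
\[ C_\mathrm{min}\big( L(x\Cdot0) \otimes L(y\Cdot0) \big)_i \stackrel{\oplus}{\subseteq} \bigoplus_{j+k=i} C_\mathrm{min}\big( L(x\Cdot0) \big)_j \otimes C_\mathrm{min}\big( L(y\Cdot0) \big)_k . \]
By Proposition \ref{prop:minimalcomplexsimplemodule}, the terms of $C_\mathrm{min}\big( L(x\Cdot0) \big)$ vanish for $|j|>\ell(x)$, the term in degree $\ell(x)$ is $T(0)$, and the term in degree $\ell(x)-1$ is negligible; analogously for $y$. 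Consequently the tensor complex has zero terms in degrees $>\ell(x)+\ell(y)$; its degree $\ell(x)+\ell(y)$ term receives a unique contribution $T(0) \otimes T(0) \cong T(0)$ from $(j,k)=(\ell(x),\ell(y))$; and its degree $\ell(x)+\ell(y)-1$ term receives only the two contributions from $(\ell(x),\ell(y)-1)$ and $(\ell(x)-1,\ell(y))$, both of which are negligible as tensor products of a negligible tilting module with an arbitrary tilting module. Corollary 2.8 of \cite{GruberMinimalTilting} then forces $C_\mathrm{min}\big( L(x\Cdot0) \otimes L(y\Cdot0) \big)_{\ell(x)+\ell(y)} \cong T(0)$, with all higher degrees vanishing.

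To extract $G(x,y)$, I would fix a Krull-Schmidt decomposition $L(x\Cdot0) \otimes L(y\Cdot0) \cong M_1 \oplus \cdots \oplus M_r$ and use that $C_\mathrm{min}$ distributes over direct sums. Since $T(0)$ is indecomposable, exactly one summand $M_k$ satisfies $C_\mathrm{min}(M_k)_{\ell(x)+\ell(y)} \cong T(0)$, while the remaining $C_\mathrm{min}(M_i)_{\ell(x)+\ell(y)}$ vanish; this $M_k$ is regular (as $T(0)$ is not negligible) and $\gfd(M_k) = \ell(x)+\ell(y)$ by \eqref{eq:gfdwfd}, since no $C_\mathrm{min}(M_i)$ has a non-zero term in higher degree. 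Any other indecomposable summand $M_j$ with $\gfd(M_j) = \ell(x)+\ell(y)$ would have a non-zero degree $\ell(x)+\ell(y)$ term in its minimal tilting complex, which must then equal $T(0)$ by Krull-Schmidt, forcing $M_j = M_k$ and giving the required uniqueness. The one delicate point is verifying that the degree $\ell(x)+\ell(y)-1$ term of the tensor complex is negligible, which crucially requires part (4) of Proposition \ref{prop:minimalcomplexsimplemodule}; without this, the two-sided support of minimal tilting complexes of simple modules could in principle introduce non-negligible contributions in degrees adjacent to the top and spoil the identification of the top term.
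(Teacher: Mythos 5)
Your proposal is correct and follows the same approach as the paper's proof: reduce to $x,y \in W_\mathrm{aff}^+$ via Lemma \ref{lem:translationtensorfundamentalgroup}, analyze the degree $\ell(x)+\ell(y)-1$ and $\ell(x)+\ell(y)$ terms of $C_\mathrm{min}\big(L(x\Cdot 0)\big) \otimes C_\mathrm{min}\big(L(y\Cdot 0)\big)$ using Proposition \ref{prop:minimalcomplexsimplemodule} (in particular part (4), which, as you rightly emphasize, is the crucial input making the argument work for simple modules despite the two-sided support of their minimal tilting complexes), invoke Corollary 2.8 of \cite{GruberMinimalTilting} to identify the top term of $C_\mathrm{min}\big(L(x\Cdot0)\otimes L(y\Cdot0)\big)$ as $T(0)$, and extract the unique summand via the additivity of $C_\mathrm{min}$ over Krull-Schmidt decompositions. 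This matches the paper's proof step for step.
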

\begin{proof}
	First suppose that $x,y \in W_\mathrm{aff}^+$.
	Recall from Proposition \ref{prop:minimalcomplexsimplemodule} that the terms $C_\mathrm{min}\big( L(x\Cdot0) \big)_i$ of the minimal tilting complex of $L(x\Cdot0)$ are zero for $i>\ell(x)$ and negligible for $i=\ell(y)-1$,
	and similarly $C_\mathrm{min}\big( L(y\Cdot0) \big)_i$ is zero for $i>\ell(y)$ and negligible for $i=\ell(x)-1$.
	Furthermore, we have
	\[ C_\mathrm{min}\big( L(x\Cdot0) \big)_{\ell(x)} \cong T(0) \qquad \text{and} \qquad C_\mathrm{min}\big( L(y\Cdot0) \big)_{\ell(y)} \cong T(0) . \]
	Hence the degree $\ell(x)+\ell(y)-1$ term of the tensor product complex $C_\mathrm{min}\big( L(x\Cdot0) \big) \otimes C_\mathrm{min}\big( L(y\Cdot0) \big)$ is the negligible tilting module
	\begin{align*}
	& \Big( C_\mathrm{min}\big( L(x\Cdot0) \big) \otimes C_\mathrm{min}\big( L(y\Cdot0) \big) \Big)_{\ell(x)+\ell(y)-1} \\
	& \qquad = \Big( C_\mathrm{min}\big( L(x\Cdot0) \big)_{\ell(x)-1} \otimes C_\mathrm{min}\big( L(y\Cdot0) \big)_{\ell(y)} \Big) \oplus \Big( C_\mathrm{min}\big( L(x\Cdot0) \big)_{\ell(x)} \otimes C_\mathrm{min}\big( L(y\Cdot0) \big)_{\ell(y)-1} \Big) ,
	\end{align*}
	the degree $\ell(x)+\ell(y)$ term of $C_\mathrm{min}\big( L(x\Cdot0) \big) \otimes C_\mathrm{min}\big( L(y\Cdot0) \big)$ is
	\[ C_\mathrm{min}\big( L(x\Cdot0) \big)_{\ell(x)} \otimes C_\mathrm{min}\big( L(y\Cdot0) \big)_{\ell(y)} \cong T(0) \otimes T(0) \cong T(0) , \]
	and the terms of $C_\mathrm{min}\big( L(x\Cdot0) \big) \otimes C_\mathrm{min}\big( L(y\Cdot0) \big)$ in degree $i>\ell(x)+\ell(y)$ are all zero.
	Arguing as in the proof of Theorem \ref{thm:genericdirectsummandWeylmodule}, we see that
	\[ C_\mathrm{min}\big( L(x\Cdot0) \otimes L(y\Cdot0) \big)_{\ell(x)+\ell(y)} \cong T(0) , \]
	% whereas $C_\mathrm{min}\big( L(x\Cdot0) \otimes L(y\Cdot0) \big)_{\ell(x)+\ell(y)-1}$ is negligible and
	whereas $C_\mathrm{min}\big( L(x\Cdot0) \otimes L(y\Cdot0) \big)_i = 0$ for $i>\ell(x)+\ell(y)$.
	This implies that there is a unique indecomposable direct summand $G(x,y)$ of $L(x\Cdot0) \otimes L(y\Cdot0)$ whose minimal tilting complex has a non-zero term in degree $\ell(x)+\ell(y)$, and the latter satisfies
	\[ C_\mathrm{min}\big( G(x,y) \big)_{\ell(x)+\ell(y)} \cong T(0) . \]
	%\[ \big[ C_\mathrm{min}\big( G(x,y) \big)_{\ell(x)+\ell(y)} : T(\omega_{xy}\Cdot0) \big]_\oplus = 1 , \]
	In particular, $G(x,y)$ is the unique indecomposable direct summand of $L(x\Cdot0) \otimes L(y\Cdot0)$ with good filtration dimension $\ell(x)+\ell(y)$ by \eqref{eq:gfdwfd}, and $G(x,y)$ is regular.
	
For arbitrary $x,y \in W_\mathrm{ext}^+$, we can write $x = x_0 \omega_x$ and $y = y_0 \omega_y$ with $x_0,y_0 \in W_\mathrm{aff}^+$, and the claim follows with $G(x,y) \cong T^{\omega_{xy}} G(x_0,y_0)$ by Lemma \ref{lem:translationtensorfundamentalgroup} (as in the proof of Theorem \ref{thm:genericdirectsummandWeylmodule}).
\end{proof}

\begin{Remark} \label{rem:genericdirectsummandsimplemodulelastterm}
	Observe that the proof of Theorem \ref{thm:genericdirectsummandsimplemodule} implies that
	\[ C_\mathrm{min}\big( G(x,y) \big)_{\ell(x)+\ell(y)} \cong T(\omega_{xy}\Cdot0) \]
	for all $x,y\in W_\mathrm{ext}^+$, and that $G(x,y)$ belongs to the linkage class of $\omega_{xy}\Cdot0$.
	Furthermore, if $x,y \in W_\mathrm{aff}^+$ then $G(x,y)$ is the unique indecomposable direct summand of the tensor product $L(x\Cdot0) \otimes L(y\Cdot0)$ with good filtration dimension $\ell(x)+\ell(y)$.
\end{Remark}

\begin{Remark}
	For $x,y\in W_\mathrm{ext}^+$, one can show as in Theorem \ref{thm:genericdirectsummandsimplemodule} that $L(x\Cdot0) \otimes L(y\Cdot0)$ has a unique regular indecomposable direct summand $G^\prime(x,y)$ with $\wfd\big( G^\prime(x,y) \big) = \ell(x)+\ell(y)$. In the following, we will only study the modules $G(x,y)$, which is justified by the fact that
	$ G^\prime(x,y) \cong G(x,y)^\tau . $ Indeed, $G(x,y)^\tau$ is a direct summand of $\big( L(x\Cdot0) \otimes L(y\Cdot0) \big)^\tau \cong L(x\Cdot0) \otimes L(y\Cdot0)$ with
	\[ \wfd\big( G(x,y)^\tau \big) = \gfd\big( G(x,y) \big) = \ell(x)+\ell(y) , \]
	and $G(x,y)^\tau$ is regular since $C_\mathrm{min}\big( G(x,y)^\tau \big)_i \cong C_\mathrm{min}\big( G(x,y) \big)_{-i}$ for all $i\in\Z$.
\end{Remark}

\begin{Definition}
	For $x,y\in W_\mathrm{ext}^+$, we call the indecomposable $\G$-module $G(x,y)$ from Theorem~\ref{thm:genericdirectsummandsimplemodule} the \emph{generic direct summand} of $L(x\Cdot0) \otimes L(y\Cdot0)$.
\end{Definition}

Lemma \ref{lem:genericdirectsummandWeylmodulefundamentalgroup} has an obvious analogue for generic direct summands of tensor products of simple $\G$-modules.

\begin{Lemma} \label{lem:genericdirectsummandsimplemodulefundamentalgroup}
	Let $x,y\in W_\mathrm{ext}^+$ and $\omega , \omega^\prime \in \Omega$. Then
	\[ G(x\omega,y\omega^\prime) \cong T^{\omega \omega^\prime} G(x,y) . \]
\end{Lemma}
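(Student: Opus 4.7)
The plan is to mimic the proof of Lemma \ref{lem:genericdirectsummandWeylmodulefundamentalgroup}, inserting one additional verification to accommodate the fact that the uniqueness statement in Theorem \ref{thm:genericdirectsummandsimplemodule} is conditional on the good filtration dimension rather than being absolute. First I would note that $x\omega, y\omega^\prime \in W_\mathrm{ext}^+$ with $\ell(x\omega) = \ell(x)$ and $\ell(y\omega^\prime) = \ell(y)$, since $\Omega$ stabilises $C_\mathrm{fund}$ and therefore preserves the length function; in particular $G(x\omega, y\omega^\prime)$ is defined and characterised inside $L(x\omega\Cdot0) \otimes L(y\omega^\prime\Cdot0)$ as the unique regular indecomposable summand of good filtration dimension $\ell(x)+\ell(y)$.

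Next, using that $T^\omega$ is a direct sum of translation equivalences between $\ell$-regular linkage classes, I would observe that $L(x\omega\Cdot0) \cong T^\omega L(x\Cdot0)$ and $L(y\omega^\prime\Cdot0) \cong T^{\omega^\prime} L(y\Cdot0)$, so Lemma \ref{lem:translationtensorfundamentalgroup} yields
\[
\bigl( L(x\omega\Cdot0) \otimes L(y\omega^\prime\Cdot0) \bigr)_\mathrm{reg} \cong T^{\omega\omega^\prime} \bigl( L(x\Cdot0) \otimes L(y\Cdot0) \bigr)_\mathrm{reg}.
\]
Since $T^{\omega\omega^\prime}$ is an equivalence, it preserves indecomposability and regularity, and therefore $T^{\omega\omega^\prime} G(x,y)$ occurs as a regular indecomposable direct summand of $L(x\omega\Cdot0) \otimes L(y\omega^\prime\Cdot0)$. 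The remaining point is to check that $T^{\omega\omega^\prime}$ preserves good filtration dimension, so that $\gfd\bigl( T^{\omega\omega^\prime} G(x,y) \bigr) = \ell(x)+\ell(y) = \ell(x\omega)+\ell(y\omega^\prime)$; the uniqueness clause of Theorem \ref{thm:genericdirectsummandsimplemodule} will then identify $T^{\omega\omega^\prime} G(x,y)$ with $G(x\omega, y\omega^\prime)$, giving the desired isomorphism.

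This preservation holds because each constituent $T_\lambda^{\omega\omega^\prime\Cdot\lambda}$ is an exact equivalence of highest weight categories sending indecomposable tilting modules to indecomposable tilting modules (by the formulas recalled in Section \ref{subsec:linkagetranslation}); hence $T^{\omega\omega^\prime}$ commutes up to isomorphism with the formation of minimal tilting complexes and preserves their degree range, whence Equation \eqref{eq:gfdwfd} gives the claim. This good-filtration-dimension bookkeeping is the main obstacle relative to the Weyl-module argument: in the Weyl-module case any regular indecomposable summand of the tensor product works, whereas here one must confirm that the translation autoequivalences respect the homological invariants encoded by the minimal tilting complex. Once this is in hand, the rest of the argument is entirely parallel to Lemma \ref{lem:genericdirectsummandWeylmodulefundamentalgroup}.
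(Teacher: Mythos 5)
Your proposal is correct and follows essentially the same line as the paper's own proof, which simply adapts the Weyl-module argument of Lemma~\ref{lem:genericdirectsummandWeylmodulefundamentalgroup} and notes that $T^{\omega\omega'}$ preserves good filtration dimension. The one extra verification you highlight -- that the translation autoequivalence respects $\gfd$ via its compatibility with minimal tilting complexes -- is exactly the ``only additional fact'' the paper invokes.
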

\begin{proof}
	We can essentially copy the proof of Lemma \ref{lem:genericdirectsummandWeylmodulefundamentalgroup}, replacing Weyl modules by simple modules.
	The only additional fact that one needs to use is that the translation functor $T^{\omega\omega^\prime}$ preserves the good filtration dimension.
\end{proof}

\begin{Remark} \label{rem:genericdirectsummandstronglyregular}
	A non-zero $\G$-module $M$ with $\gfd(M) = d$ is called \emph{strongly regular} if $C_\mathrm{min}(M)_{d-1}$ is negligible and $C_\mathrm{min}(M)_d$ is not \cite[Definition 4.1]{GruberLinkageTranslation}.
	By the proofs of Theorems \ref{thm:genericdirectsummandWeylmodule} and \ref{thm:genericdirectsummandsimplemodule} and by Remarks \ref{rem:genericdirectsummandWeylmodulelastterm} and \ref{rem:genericdirectsummandsimplemodulelastterm}, the $\G$-modules $G(x,y)$ and $G_\Delta(x,y)$ are strongly regular for all $x,y\in W_\mathrm{ext}^+$.
\end{Remark}

\begin{Remark} \label{rem:genericdirectsummanddifferentmodules}
	In principle, there is no reason why one would need to restrict one's attention to the study of regular indecomposable direct summands of tensor products of $\G$-modules that belong to the same class of modules (such as Weyl modules or simple modules).
	For $x,y\in W_\mathrm{ext}^+$, the proof of Theorem \ref{thm:genericdirectsummandWeylmodule} can easily be adapted to show that a tensor product of the form $\Delta(x\Cdot0) \otimes \nabla(y\Cdot0)$ has a unique regular indecomposable direct summand $G_{\Delta,\nabla}(x,y)$, and that $T(\omega_{xy}\Cdot0)$ appears with multiplicity one in a Krull-Schmidt decomposition of $C_\mathrm{min}\big( G_{\Delta,\nabla}(x,y) \big)_{\ell(x)-\ell(y)}$.
	
	Similarly, we can adapt the proof of Theorem \ref{thm:genericdirectsummandsimplemodule} to show that $L(x\Cdot0) \otimes \Delta(y\Cdot0)$ has a unique regular indecomposable direct summand with good filtration dimension $\ell(x)+\ell(y)$, and that $L(x\Cdot0) \otimes \nabla(y\Cdot0)$ has a unique regular indecomposable direct summand with Weyl filtration dimension $\ell(x)+\ell(y)$.
\end{Remark}

\begin{Remark} \label{rem:genericdirectsummanditerated}
	For the most part of this manuscript, we have been (and will be) restricting our attention to tensor products of two $\G$-modules, but one may also ask about regular indecomposable direct summands of iterated tensor products with more than two constituents.
	For $x_1,\ldots,x_n \in W_\mathrm{ext}^+$,
	one can use the techniques from the proofs of Theorems \ref{thm:genericdirectsummandWeylmodule} and \ref{thm:genericdirectsummandsimplemodule} to show that the iterated tensor product
	\[ \Delta(x_1\Cdot0) \otimes \cdots \otimes \Delta(x_n\Cdot0) \]
	has a unique regular indecomposable direct summand $G_\Delta(x_1,\ldots,x_n)$ (which has good filtration dimension $\ell(x_1) + \cdots + \ell(x_n)$) and that the iterated tensor product
	\[ L(x_1\Cdot0) \otimes \cdots \otimes L(x_n\Cdot0) \] 
	has a unique regular indecomposable direct summand $G(x_1,\ldots,x_n)$ that satisfies
	\[ \gfd\big( G(x_1,\ldots,x_n) \big) = \ell(x_1)+\cdots+\ell(x_n) . \]
\end{Remark}

\section{The Steinberg-Lusztig tensor product theorem} \label{sec:SteinbergLusztigTPtheorem}

Let us write $X_1 = \{ \lambda \in X^+ \mid (\lambda,\alpha^\vee) < \ell \text{ for all } \alpha \in \Pi \}$ for the set of $\ell$-restricted weights.
Then for all $\lambda \in X^+$, there are uniquely determined weights $\lambda_0 \in X_1$ and $\lambda_1 \in X^+$ such that $\lambda = \lambda_0 + \ell \lambda_1$, and by the Steinberg-Lusztig tensor product theorem (see Sections II.3.16 and II.H.10 in \cite{Jantzen}), the simple $\G$-module $L(\lambda)$ admits a tensor product decomposition $L(\lambda) \cong L(\lambda_0) \otimes L(\ell\lambda_1)$.
Furthermore, the simple $\G$-module $L(\ell\lambda_1)$ can be described as a Frobenius twist of a simple module of highest weight $\lambda_1$ for $\G$ (in the modular case) or for $\mathfrak{g}$ (in the quantum case).
Our aim in this section is to establish an analogue of the Steinberg-Lusztig tensor product theorem for generic direct summands of tensor products of simple $\G$-modules.
We start by recalling some results about Frobenius kernels, small quantum groups and Frobenius morphisms, for which it will be necessary to distinguish the modular case and the quantum case.

\subsection*{The modular case}

The group scheme $\G$ admits a \emph{Frobenius endomorphism} $\mathrm{Fr}\colon \G \to \G$ that fixes the maximal torus and Borel subgroup corresponding to $\Phi^+$; see Section II.3.1 in \cite{Jantzen}.
The \emph{Frobenius kernels} $\G_r \coloneqq \ker(\mathrm{Fr}^r)$ for $r>0$ are infinitesimal subgroup schemes of $\G$ (in the sense of Section I.8.1 in \cite{Jantzen}).
% and play an important role in the representation theory of $\G$.
The Frobenius twist $M^{[r]}$ of a $\G$-module $M$ is defined by composing the action of $\G$ on $M$ with the $r$-th power $\mathrm{Fr}^r$ of the Frobenius endomorphism.
Note that the restriction to $\G_r$ of the Frobenius twist $M^{[r]}$ is a direct sum of copies of the trivial one-dimensional $\G_r$-module.
Conversely, if $N$ is a $\G$-module whose restriction to $\G_r$ is a direct sum of copies of the trivial one-dimensional $\G_r$-module then there exists a $\G$-module $M$, uniquely determined by $N$, with $N = M^{[r]}$, and we write $M = N^{[-r]}$.
Steinberg's tensor product theorem asserts that for $\mu \in X_1$ and $\lambda \in X$, we have $L(\mu + \ell\lambda) \cong L(\mu) \otimes L(\lambda)^{[1]}$.
For our applications to generic direct summands, we will need an indecomposability criterion for twisted tensor products of the form $M \otimes N^{[1]}$, where $M$ and $N$ are indecomposable $\G$-modules.
The following result of S.\ Donkin will be very useful; see the lemma in Section 2 of \cite{DonkinQuestionVerma}.

\begin{Lemma} \label{lem:Donkintensorproductindecomposable}
	Let $V$ and $W$ be $\G$-modules such that $V$ is indecomposable as a $\G_1$-module, $W$ is indecomposable as a $\G_r$-module for some $r > 0$ and the restriction to $\G_1$ of $W$ is a direct sum of copies of the trivial one-dimensional $\G_1$-module.
	Then $V \otimes W$ is indecomposable as a $\G_r$-module.
\end{Lemma}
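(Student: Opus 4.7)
The plan is to show that $\End_{\G_r}(V \otimes W)$ is a local $\kk$-algebra, which will give indecomposability of $V \otimes W$ as a $\G_r$-module since $\kk$ is algebraically closed. The key mechanism is to use that $\G_1$ is normal in $\G_r$, together with the triviality of $W|_{\G_1}$, to produce a surjective algebra homomorphism from $\End_{\G_r}(V \otimes W)$ onto the local ring $\End_{\G_r}(W)$ with nilpotent kernel.

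First I would establish the following structural identification. Since $W|_{\G_1}$ is a direct sum of copies of the trivial module, $V \otimes W|_{\G_1} \cong V^{\oplus \dim W}$ as $\G_1$-modules, which yields a canonical isomorphism of $\kk$-algebras $\End_{\G_1}(V \otimes W) \cong \End_{\G_1}(V) \otimes_\kk \End_\kk(W).$ Because $\G_1$ is normal in $\G_r$, the conjugation action of $\G_r$ on $\End_\kk(V \otimes W)$ preserves $\End_{\G_1}(V \otimes W)$ and descends to $\G_r/\G_1$; taking $\G_r$-invariants of $\End_\kk(V \otimes W)$ in two steps (first $\G_1$-, then $\G_r/\G_1$-invariants) gives
\[ \End_{\G_r}(V \otimes W) = \bigl( \End_{\G_1}(V) \otimes \End_\kk(W) \bigr)^{\G_r/\G_1}. \]

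Next I would exploit the locality of $\End_{\G_1}(V)$. Since $V$ is indecomposable as a $\G_1$-module over the algebraically closed field $\kk$, this is a finite-dimensional local algebra with residue field $\kk$; let $J = \rad \End_{\G_1}(V)$, so that $\End_{\G_1}(V) = \kk \cdot 1_V \oplus J$ as vector spaces. This is a $\G_r/\G_1$-stable decomposition: the identity is fixed, and the Jacobson radical (as a characteristic ideal of the algebra) is preserved by the algebra automorphisms induced by the $\G_r/\G_1$-action. Tensoring with $\End_\kk(W)$ and taking $\G_r/\G_1$-invariants therefore yields a vector-space direct sum
\[ \End_{\G_r}(V \otimes W) = \End_\kk(W)^{\G_r/\G_1} \oplus \bigl( J \otimes \End_\kk(W) \bigr)^{\G_r/\G_1} . \]
The first summand equals $\End_{\G_r}(W)$ (since $\G_1$ acts trivially on $W$) and is realised as a unital subalgebra via $\psi \mapsto 1_V \otimes \psi$, which one checks directly to be $\G_r$-equivariant on $V \otimes W$ using the coproduct formula. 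The second summand $N$ is a two-sided ideal of $\End_{\G_r}(V \otimes W)$ --- it arises by intersection with the two-sided ideal $J \otimes \End_\kk(W)$ of $\End_{\G_1}(V) \otimes \End_\kk(W)$ --- and is nilpotent because $J$ is.

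The final step is formal: I obtain a surjective algebra homomorphism $\End_{\G_r}(V \otimes W) \twoheadrightarrow \End_{\G_r}(W)$ with nilpotent kernel $N$. Since $W$ is indecomposable as a $\G_r$-module, $\End_{\G_r}(W)$ is local, and locality lifts across nilpotent kernels, so $\End_{\G_r}(V \otimes W)$ is local and $V \otimes W$ is indecomposable as a $\G_r$-module. The main technical obstacle is the verification that the identification and decomposition in the middle paragraphs are genuinely $\G_r/\G_1$-equivariant --- i.e.\ that conjugation by $\G_r$ really does intertwine with the tensor factorisation $\End_{\G_1}(V) \otimes \End_\kk(W)$ and preserves the splitting off of $\kk \cdot 1_V$; once these formal compatibilities are in place, the locality argument is routine.
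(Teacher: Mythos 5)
The paper cites this lemma to Donkin without reproducing the proof, so I can only assess your argument on its own terms. Your overall mechanism --- identifying $\End_{\G_1}(V\otimes W)\cong\End_{\G_1}(V)\otimes_\kk\End_\kk(W)$, taking $\G_r/\G_1$-invariants, splitting off the radical to get a nilpotent ideal $N$ with $\End_{\G_r}(V\otimes W)/N\cong\End_{\G_r}(W)$, and invoking locality lifting across nilpotent kernels --- is sound and is almost certainly the right shape of argument. But there is a genuine gap at the step you yourself flag as ``the main technical obstacle,'' and the justification you supply for it does not work for group scheme actions.

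Specifically, you assert that $J=\rad\End_{\G_1}(V)$ is $\G_r/\G_1$-stable because ``the Jacobson radical (as a characteristic ideal) is preserved by the algebra automorphisms induced by the $\G_r/\G_1$-action.'' This reasoning is valid for an abstract group acting by automorphisms, but $\G_r/\G_1$ is a non-reduced infinitesimal group scheme with only the trivial $\kk$-point, so ``preserved by automorphisms'' must be read as ``is a $\kk[\G_r/\G_1]$-subcomodule,'' and that is not automatic: for an $R$-point $g$ with $R$ non-reduced, conjugation gives an $R$-algebra automorphism of $\End_{\G_1}(V)\otimes R$ that need not carry $J\otimes R$ to itself (the image is merely some nilpotent codimension-one ideal, which over a non-reduced $R$ can genuinely differ from $J\otimes R$). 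A concrete obstruction: the unique algebra map $\End_{\G_1}(V)\to\kk$ agrees with $\tfrac{1}{\dim V}\cdot\mathrm{tr}|_{\End_{\G_1}(V)}$ when $\ell\nmid\dim V$, and trace is conjugation-invariant, so in that case $J=\ker(\mathrm{tr})$ is visibly a subcomodule; but when $\ell\mid\dim V$ the trace vanishes identically on $\End_{\G_1}(V)$ and this route closes. The standard way to rescue the step is to use that $V$ is a module for the full \emph{reduced} group $\G$: every $\kk$-point of $\G$ acts on $\End_{\G_1}(V)$ by an honest algebra automorphism and hence preserves $J$; since $\G$ is reduced over an algebraically closed field, $\G(\kk)$ is schematically dense, so a $\G(\kk)$-stable subspace of a $\G$-module is automatically a $\G$-subcomodule; and one then restricts to $\G_r\subseteq\G$. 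Without this appeal to the ambient reduced group (or some other mechanism), your claimed $\G_r/\G_1$-equivariant decomposition $\End_{\G_1}(V)=\kk\cdot 1_V\oplus J$ is unjustified, and the subsequent locality argument does not get off the ground. Once that one step is patched in this way, the rest of your argument goes through.
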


Note that for every indecomposable $\G$-module $M$, we can choose $r>0$ such that $M$ is also indecomposable as a $\G_r$-module, because $\End_{\G}(M) = \End_{\G_r}(M)$ when $r$ is large enough by point (6) in Section I.9.8 in \cite{Jantzen}.
Conversely, a $\G$-module which is indecomposable as a $\G_r$-module for some $r>0$ is also indecomposable as a $\G$-module.
By applying Lemma \ref{lem:Donkintensorproductindecomposable} to the indecomposable $\G$-module $M^{[1]}$, we obtain the following results:

\begin{Corollary} \label{cor:twistedtensorproductindecomposable}
	Let $V$ and $M$ be $\G$-modules such that $V$ is indecomposable as a $\G_1$-module and $M$ is indecomposable as a $\G$-module. Then $V \otimes M^{[1]}$ is indecomposable as a $\G$-module.
\end{Corollary}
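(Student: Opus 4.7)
The plan is to apply Donkin's Lemma \ref{lem:Donkintensorproductindecomposable} directly, taking the $W$ in that lemma to be $M^{[1]}$. The three hypotheses of the lemma that need to be verified are: $V$ is indecomposable as a $\G_1$-module (given), $M^{[1]}$ is indecomposable as a $\G_r$-module for some $r > 0$, and the restriction of $M^{[1]}$ to $\G_1$ is a direct sum of copies of the trivial one-dimensional $\G_1$-module. The third condition is immediate from the definition of the Frobenius twist recalled in the text.

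For the second condition, I would first observe that $M^{[1]}$ is indecomposable as a $\G$-module, since the functor $(-)^{[1]}$ is fully faithful and the assignment $N \mapsto N^{[1]}$ sends a direct sum decomposition of $M$ to one of $M^{[1]}$ (equivalently, $\End_\G(M^{[1]}) \cong \End_\G(M)$ has no nontrivial idempotents). Then I would invoke the remark made just before the statement of the corollary, which says that any indecomposable $\G$-module is indecomposable as a $\G_r$-module for $r$ large enough, because $\End_\G(M^{[1]}) = \End_{\G_r}(M^{[1]})$ for $r \gg 0$ by \cite[I.9.8(6)]{Jantzen}. Choosing such an $r$ gives the required indecomposability of $M^{[1]}$ over $\G_r$.

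With the three hypotheses in place, Lemma \ref{lem:Donkintensorproductindecomposable} yields that $V \otimes M^{[1]}$ is indecomposable as a $\G_r$-module. To pass from indecomposability over $\G_r$ back to indecomposability over $\G$, I would use the converse statement noted in the text: a $\G$-module which is indecomposable over some $\G_r$ is automatically indecomposable over $\G$, since any $\G$-module decomposition restricts to a $\G_r$-module decomposition. This completes the proof.

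I do not anticipate a real obstacle here; the corollary is an essentially formal consequence of Donkin's lemma together with the passage between $\G$-indecomposability and $\G_r$-indecomposability. The only point requiring slight care is making sure that the same $r$ works for all three hypotheses, but since the conditions on $V$ and on the trivial $\G_1$-action on $M^{[1]}$ do not depend on $r$, we only need $r$ large enough to witness the indecomposability of $M^{[1]}$ as a $\G_r$-module, and any such $r$ suffices.
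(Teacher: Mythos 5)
Your proposal matches the paper's own (implicit) argument exactly: the paper precedes Corollary \ref{cor:twistedtensorproductindecomposable} with the observations that an indecomposable $\G$-module is $\G_r$-indecomposable for $r \gg 0$ and that $\G_r$-indecomposability implies $\G$-indecomposability, and then says "By applying Lemma \ref{lem:Donkintensorproductindecomposable} to the indecomposable $\G$-module $M^{[1]}$, we obtain the following results." You have simply spelled out the verification of the three hypotheses of Donkin's lemma with $W = M^{[1]}$ and the passage back from $\G_r$-indecomposability to $\G$-indecomposability, which is the intended reading.
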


\begin{Corollary} \label{cor:twistedtensorproductindecomposableiterated}
	Let $M_0,\ldots,M_r$ be $\G$-modules such that $M_0,\ldots,M_{r-1}$ are indecomposable as $\G_1$-modules and $M_r$ is indecomposable as a $\G$-module.
	Then the tensor product $M_0 \otimes M_1^{[1]} \otimes \cdots \otimes M_r^{[r]}$ is indecomposable as a $\G$-module.
\end{Corollary}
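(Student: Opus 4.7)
The natural approach is induction on $r$, with Corollary \ref{cor:twistedtensorproductindecomposable} as the inductive step.

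The base case $r = 0$ is the hypothesis itself: $M_0 = M_r$ is indecomposable as a $\G$-module, and the condition on $M_0,\ldots,M_{r-1}$ being indecomposable as $\G_1$-modules is vacuous.

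For the inductive step, suppose the statement is established for $r-1$. Given $M_0,\ldots,M_r$ satisfying the hypotheses, I would rewrite
\[ M_0 \otimes M_1^{[1]} \otimes M_2^{[2]} \otimes \cdots \otimes M_r^{[r]} \cong M_0 \otimes \bigl( M_1 \otimes M_2^{[1]} \otimes \cdots \otimes M_r^{[r-1]} \bigr)^{[1]} , \]
using the fact that the Frobenius twist is a tensor functor and that $(-)^{[i]} \cong ((-)^{[i-1]})^{[1]}$. Set $N \coloneqq M_1 \otimes M_2^{[1]} \otimes \cdots \otimes M_r^{[r-1]}$. The modules $M_1,\ldots,M_{r-1}$ are indecomposable as $\G_1$-modules and $M_r$ is indecomposable as a $\G$-module, so the inductive hypothesis (applied with the shifted sequence $M_1,\ldots,M_r$ of length $r$, i.e.\ parameter $r-1$) yields that $N$ is indecomposable as a $\G$-module. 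Then Corollary \ref{cor:twistedtensorproductindecomposable}, applied to $V = M_0$ (indecomposable as a $\G_1$-module) and $M = N$ (indecomposable as a $\G$-module), gives that $M_0 \otimes N^{[1]}$ is indecomposable as a $\G$-module, completing the induction.

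There is essentially no obstacle here; the only thing to verify is the compatibility of iterated Frobenius twists with tensor products, which is a standard property of the Frobenius endomorphism. The content of the statement is entirely extracted from Corollary \ref{cor:twistedtensorproductindecomposable} by straightforward bootstrapping, so the proof is very short.
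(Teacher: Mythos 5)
Your proof is correct and takes essentially the same route as the paper: induction on $r$, regrouping the tensor product as $M_0 \otimes \bigl( M_1 \otimes \cdots \otimes M_r^{[r-1]} \bigr)^{[1]}$, and invoking Corollary \ref{cor:twistedtensorproductindecomposable} for the inductive step.
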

\begin{proof}
	Note that $M_0 \otimes M_1^{[1]} \otimes \cdots \otimes M^{[r]} \cong M_0 \otimes \big( M_1 \otimes \cdots \otimes M_r^{[r-1]} \big)^{[1]}$.
	Using this observation, the claim follows from Corollary \ref{cor:twistedtensorproductindecomposable}, by induction on $r$.
\end{proof}

\subsection*{The quantum case}

The quantum Frobenius morphism, constructed by G.~Lusztig in \cite{LusztigTensorProduct}, is a surjective Hopf algebra homomorphism $\mathrm{Fr} \colon \G = U_\zeta(\mathfrak{g}) \to U(\mathfrak{g})$, where $U(\mathfrak{g})$ denotes the universal enveloping algebra of $\mathfrak{g}$.
It gives rise to an exact and monoidal Frobenius twist functor $M \mapsto M^{[1]}$ from $\Rep(\mathfrak{g})$ to $\Rep(\G)$.
Let us write $L_\C(\lambda)$ for the simple $\mathfrak{g}$-module of highest weight $\lambda \in X^+$.
Then, for $\mu \in X_1$ and $\lambda \in X^+$, we have $L(\mu+\ell\lambda) \cong L(\mu) \otimes L_\C(\lambda)^{[1]}$ by Lusztig's tensor product theorem; see Section II.H.10 in \cite{Jantzen}.

Let us write $\G_1 = u_\zeta(\mathfrak{g})$ for the \emph{small quantum group}, defined as in Section II.H.7 in \cite{Jantzen}.
It is a finite-dimensional normal Hopf-subalgebra of $\G = U_\zeta(\mathfrak{g})$ and plays a role analogous to that of the first Frobenius kernel in the modular case.
The restriction to $\G_1$ of a simple $\G$-module $L(\lambda)$ with $\lambda \in X_1$ is also simple as a $\G_1$-module, and this provides a complete set of representatives for the isomorphism classes of simple $\G_1$-modules \cite[Section II.H.13]{Jantzen}.
The restriction to $\G_1$ of the Frobenius twist $M^{[1]}$ of a $\G$-module $M$ is a direct sum of copies of the trivial one-dimensional $\G_1$-module.

Next we establish a quantum analogue of the indecomposability criterion for twisted tensor products from Corollary \ref{cor:twistedtensorproductindecomposable}.
Note that the hypotheses in the following lemma are stronger than those that we imposed in the modular case.
We do not know if a direct analogue of Corollary \ref{cor:twistedtensorproductindecomposable} holds in the quantum case.

\begin{Lemma} \label{lem:twistedtensorproductindecomposablequantum}
	Let $V$ be a $\G$-module that has simple socle as a $\G_1$-module, and let $L$ be a simple $\mathfrak{g}$-module.
	Then $V \otimes L^{[1]}$ is an indecomposable $\G$-module.
\end{Lemma}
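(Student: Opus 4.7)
The plan is to apply the functor $F = \Hom_{\G_1}(S, -)$, where $S \coloneqq \soc_{\G_1}(V)$, and to use the simplicity of $L$ as a $\mathfrak{g}$-module to preclude a non-trivial direct sum decomposition of $V \otimes L^{[1]}$. Since $\G_1$ is normal in $\G$, the $\G_1$-socle $S$ is automatically a $\G$-submodule of $V$, and for any $\G$-module $M$ the space $\Hom_{\G_1}(S, M)$ inherits a $\G$-action on which $\G_1$ acts trivially by $\G_1$-equivariance of $\G_1$-homomorphisms. Such $\G$-modules are Frobenius twists of $\mathfrak{g}$-modules, so I regard $F$ as a functor $\Rep(\G) \to \Rep(\mathfrak{g})$.

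First I compute $F(V)$: since $S$ is the simple $\G_1$-socle of $V$, Schur's lemma gives $F(V) = \Hom_{\G_1}(S, S) \cong \C$, and because $\mathfrak{g}$ is a simple Lie algebra its only one-dimensional representation is the trivial one; hence $F(V)$ is the trivial $\mathfrak{g}$-module. Next, $L^{[1]}|_{\G_1}$ is a trivial $\G_1$-module of dimension $\dim L$, so the evident vector space identification yields a $\G$-module isomorphism
\[ F(V \otimes L^{[1]}) \cong F(V) \otimes L^{[1]} \cong L^{[1]}, \]
which after applying the Frobenius morphism identifies $F(V \otimes L^{[1]})$ with the simple $\mathfrak{g}$-module $L$.

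Now suppose $V \otimes L^{[1]} \cong A \oplus B$ as $\G$-modules. Applying $F$ yields a $\mathfrak{g}$-module decomposition $L \cong F(A) \oplus F(B)$, and simplicity of $L$ forces one summand to vanish, say $F(B) = \Hom_{\G_1}(S, B) = 0$. On the other hand, $(V \otimes L^{[1]})|_{\G_1} \cong V^{\oplus \dim L}$ has $\G_1$-socle $S^{\oplus \dim L}$, so every simple $\G_1$-submodule of $V \otimes L^{[1]}$ is isomorphic to $S$. Consequently $\soc_{\G_1}(B) = 0$, which forces $B = 0$, and we conclude that $V \otimes L^{[1]}$ is indecomposable.

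The only point requiring genuine care is the $\G$-equivariance of the identification $F(V \otimes L^{[1]}) \cong F(V) \otimes L^{[1]}$, which one verifies by unwinding the two $\G$-actions. Beyond that, the argument rests on two semisimplicity facts special to the quantum setting: that $\mathfrak{g}$ has no non-trivial one-dimensional representations (so $F(V)$ must be trivial) and that the simple $\mathfrak{g}$-module $L$ admits no non-trivial decomposition. This is precisely why the stronger hypothesis of a simple $\G_1$-socle on $V$, rather than mere $\G_1$-indecomposability as in Corollary \ref{cor:twistedtensorproductindecomposable}, is the right hypothesis here, and why a naive attempt to imitate Donkin's lemma in the quantum case would run into trouble.
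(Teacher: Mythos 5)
Your proof is correct and takes essentially the same approach as the paper: apply $\Hom_{\G_1}(S,-)$ with $S = \soc_{\G_1}(V)$, observe that this sends $V \otimes L^{[1]}$ to $L^{[1]}$, and use simplicity of $L$ to rule out a non-trivial decomposition. The only minor variation is in the final step: the paper invokes Krull--Schmidt over $\G_1$ to see that both summands of $V \otimes L^{[1]}$ are non-empty direct sums of copies of $V$ (hence both have non-zero image under $\Hom_{\G_1}(S,-)$), whereas you deduce $B=0$ from the vanishing of its $\G_1$-socle once $\Hom_{\G_1}(S,B)=0$; both routes are equally short.
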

\begin{proof}
	By the assumption, there exists a weight $\lambda \in X_1$ such that $\dim \Hom_{\G_1}\big( L(\lambda) , V \big) = 1$, and as $\G_1$ acts trivially on $L^{[1]}$, there are isomorphisms of $\G$-modules
	\[ \Hom_{\G_1}\big( L(\lambda) , V \otimes L^{[1]} \big) \cong \Hom_{\G_1}\big( L(\lambda) , V \big) \otimes L^{[1]} \cong L^{[1]} . \]
	Suppose for a contradiction that there is a non-trivial direct sum decomposition $V \otimes L^{[1]} \cong M_1 \oplus M_2$.
	As $\G_1$-modules, both $M_1$ and $M_2$ are isomorphic to (non-empty) direct sums of copies of $V$, so we obtain a non-trivial direct sum decomposition (as $\G$-modules)
	\[ L^{[1]} \cong \Hom_{\G_1}\big( L(\lambda) , V \otimes L^{[1]} \big) \cong \Hom_{\G_1}\big( L(\lambda) , M_1 \big) \oplus \Hom_{\G_1}\big( L(\lambda) , M_2 \big) , \]
	contradicting the simplicity of $L^{[1]}$.
\end{proof}

\begin{Remark}
	As explained in Section 3.4 in \cite{AndersenPoloWenInjective}, the $\G_1$-socle of a $\G$-module coincides with its $\G$-socle.
	Therefore, the condition that $V$ is a $\G$-module with simple socle as a $\G_1$-module can equivalently be stated as $V$ having simple socle with $\ell$-restricted highest weight as a $\G$-module.
\end{Remark}
\bigskip

Now let us return to the general case.
Before we discuss generic direct summands, we need to establish some properties of the length function on $W_\mathrm{ext}$.
Recall that $\ell(x)$ is defined as the number of reflection hyperplanes $H_{\beta,r}$ that separate $C_\mathrm{fund}$ and $x \Cdot C_\mathrm{fund}$, for $x \in W_\mathrm{ext}$, $\beta \in \Phi^+$ and $r \in \Z$.
We define $\rho^\vee$ to be the half-sum of all positive coroots, that is
\[ \rho^\vee \coloneqq \frac12 \cdot \sum_{ \beta \in \Phi^+ } \beta^\vee . \]

\begin{Lemma} \label{lem:lengthdominant}
	Let $x \in W_\mathrm{ext}^+$ and $\lambda \in X^+$.
	Then $\ell(t_\lambda x) = \ell(t_\lambda) + \ell(x)$ and $\ell(t_\lambda) = 2 \cdot (\lambda,\rho^\vee)$.
\end{Lemma}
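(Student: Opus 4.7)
The plan is to decompose the length function along the root directions. For $y \in W_\mathrm{ext}$ and $\beta \in \Phi^+$, let $a_\beta(y)$ denote the number of hyperplanes $H_{\beta,r}$ (with $r$ varying in $\Z$) that separate $C_\mathrm{fund}$ from $y \Cdot C_\mathrm{fund}$, so that by definition $\ell(y) = \sum_{\beta \in \Phi^+} a_\beta(y)$. The whole argument consists of computing $a_\beta(t_\lambda x)$ and $a_\beta(t_\lambda)$ for each $\beta$ individually.

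First I would use the hypothesis $x \in W_\mathrm{ext}^+$ (together with $\ell \geq h$, which ensures the equivalence $x \in W_\mathrm{ext}^+ \iff x\Cdot0 \in X^+$ recalled in Subsection~\ref{subsec:linkagetranslation}) to conclude that the alcove $x \Cdot C_\mathrm{fund}$ lies entirely in the dot-action dominant chamber $\{z \in X_\R \mid (z+\rho,\beta^\vee) > 0 \text{ for all } \beta \in \Phi^+\}$; this is the geometric content of $W_\mathrm{ext}^+$ being the set of minimal-length representatives of the cosets $W_\mathrm{fin} \backslash W_\mathrm{ext}$, and it amounts to checking that no hyperplane $H_{\beta,0}$ crosses $x\Cdot C_\mathrm{fund}$. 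Once this is in hand, for each $\beta \in \Phi^+$ there is a unique integer $k_\beta(x) \geq 0$ with $k_\beta(x)\ell < (z+\rho,\beta^\vee) < (k_\beta(x)+1)\ell$ for all $z \in x\Cdot C_\mathrm{fund}$, and then $a_\beta(x) = k_\beta(x)$ because the separating hyperplanes in the $\beta$-family are precisely $H_{\beta,1}, \ldots, H_{\beta,k_\beta(x)}$.

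Next, since $t_\lambda$ acts by translation by $\ell\lambda$ under the $\ell$-dilated dot action, we have $t_\lambda x \Cdot C_\mathrm{fund} = \ell\lambda + x \Cdot C_\mathrm{fund}$. For $z$ in this alcove the linear form $(z+\rho,\beta^\vee)$ is shifted by $\ell(\lambda,\beta^\vee) \geq 0$, where non-negativity uses $\lambda \in X^+$, so $a_\beta(t_\lambda x) = a_\beta(x) + (\lambda,\beta^\vee)$. Summing over $\beta \in \Phi^+$ and invoking the identity $\sum_{\beta \in \Phi^+} \beta^\vee = 2\rho^\vee$ yields $\ell(t_\lambda x) = \ell(x) + 2(\lambda,\rho^\vee)$. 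Specializing to $x = e \in W_\mathrm{ext}^+$ recovers the second formula $\ell(t_\lambda) = 2(\lambda,\rho^\vee)$, and substituting this back into the identity yields the first formula $\ell(t_\lambda x) = \ell(t_\lambda) + \ell(x)$.

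The main obstacle is the dominance assertion in the first step, namely that $x \Cdot C_\mathrm{fund}$ lies strictly on the positive side of every hyperplane $H_{\beta,0}$; once this is verified, the rest is a routine hyperplane count in each parallel family and the non-negativity of $(\lambda,\beta^\vee)$ for $\lambda \in X^+$ ensures that all these counts add (rather than cancel).
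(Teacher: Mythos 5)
Your proposal is correct and follows essentially the same approach as the paper: both count separating hyperplanes one root direction $\beta$ at a time, use $x \in W_\mathrm{ext}^+$ (equivalently, dominance of $x\Cdot 0$) to pin down the count in each $\beta$-family as a clean nonnegative integer, observe that translation by $\ell\lambda$ adds $(\lambda,\beta^\vee)\geq 0$ to that count, and then sum via $\sum_{\beta\in\Phi^+}\beta^\vee = 2\rho^\vee$. The only cosmetic difference is that the paper phrases the hyperplane count using the single representative point $x\Cdot 0$ rather than a uniform statement over the whole alcove $x\Cdot C_\mathrm{fund}$.
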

\begin{proof}
	For $\beta \in \Phi^+$ and $r \in \Z$, the reflection hyperplane $H_{\beta,r}$ separates $C_\mathrm{fund}$ and $x\Cdot C_\mathrm{fund}$ if and only if $0 < \ell r < ( x\Cdot0 + \rho , \beta^\vee )$, because $x\Cdot0$ is dominant.
	Analogously, $H_{\beta,r}$ separates $C_\mathrm{fund}$ and $t_\lambda x\Cdot C_\mathrm{fund}$ if and only if $0 < \ell r < ( t_\lambda x\Cdot0 + \rho , \beta^\vee ) = \ell \cdot (\lambda,\beta^\vee) + ( x\Cdot0 + \rho , \beta^\vee )$, and $H_{\beta,r}$ separates $C_\mathrm{fund}$ and $t_\lambda \Cdot C_\mathrm{fund}$ if and only if $0 < \ell r < ( t_\lambda \Cdot0 + \rho , \beta^\vee ) = \ell \cdot (\lambda,\beta^\vee) + (\rho,\beta^\vee)$.
	As $0 < (\rho,\beta^\vee) < \ell$ for all $\beta \in \Phi^+$ (because $\ell \geq h$), it follows that
	\[ \ell(t_\lambda x) - \ell(x) = \sum_{\beta\in\Phi^+} (\lambda,\beta^\vee) = \ell(t_\lambda) , \]
	and $\ell(t_\lambda) = 2 \cdot (\lambda,\rho^\vee)$, as claimed.
\end{proof}

Now we are ready to prove our results relating generic direct summands with the tensor product theorems of R.~Steinberg and G.~Lusztig.
We fix two elements $x,y\in W_\mathrm{ext}^+$ and write
\[ x\Cdot 0 = \lambda^\prime + \ell \lambda \qquad \text{and} \qquad y \Cdot0 = \mu^\prime + \ell \mu \]
with $\lambda^\prime,\mu^\prime\in X_1$ and $\lambda,\mu \in X^+$.
Furthermore, we set 
\[ x_0 \coloneqq t_{-\lambda} x \qquad \text{and} \qquad y_0 = t_{-\mu} y . \]
Observe that $x_0\Cdot0 = x\Cdot0 - \ell\lambda = \lambda^\prime$, whence $x_0\in W_\mathrm{ext}^+$ and
\[ \ell(x) = \ell(t_\lambda x_0) = \ell(t_\lambda) + \ell(x_0) \]
by Lemma \ref{lem:lengthdominant}, and similarly $\ell(y) = \ell(t_\mu) + \ell(y_0)$.

\begin{Theorem} \label{thm:lusztigtensorproductgenericdirectsummand}
	Suppose that we are in the quantum case.
	Then $G(x,y)$ is a direct summand of the tensor product $G(x_0,y_0) \otimes L_\C(\lambda+\mu)^{[1]}$.
	If $G(x_0,y_0)$ has simple socle as a $\G_1$-module then
	\[ G(x,y) \cong G(x_0,y_0) \otimes L_\C(\lambda+\mu)^{[1]} . \]
\end{Theorem}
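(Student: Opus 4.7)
The plan is to combine Lusztig's tensor product theorem with the minimal tilting complex analysis behind Theorem~\ref{thm:genericdirectsummandsimplemodule}, using the length formula of Lemma~\ref{lem:lengthdominant} to match up good filtration dimensions. First I would apply Lusztig's tensor product theorem to rewrite
\[
L(x \Cdot 0) \otimes L(y \Cdot 0) \cong L(x_0 \Cdot 0) \otimes L(y_0 \Cdot 0) \otimes \bigl(L_\C(\lambda) \otimes L_\C(\mu)\bigr)^{[1]} ,
\]
and then decompose the two tensor factors on the right: since $\mathfrak{g}$-modules are semisimple and $\lambda+\mu$ is the unique maximal weight of $L_\C(\lambda)\otimes L_\C(\mu)$, we may write $L_\C(\lambda) \otimes L_\C(\mu) \cong L_\C(\lambda+\mu) \oplus S$ where every simple summand of $S$ has highest weight $\nu < \lambda+\mu$, and we may pull off $L(x_0\Cdot 0) \otimes L(y_0\Cdot 0) \cong G(x_0, y_0) \oplus R$. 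Expanding the product then writes $L(x\Cdot 0) \otimes L(y\Cdot 0)$ as a direct sum of four pieces.

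Next I would exploit Lemma~\ref{lem:lengthdominant} to compute
\[
\ell(x) + \ell(y) = \ell(x_0) + \ell(y_0) + 2(\lambda + \mu, \rho^\vee) ,
\]
and to observe that $L_\C(\nu)^{[1]} \cong L(\ell\nu) = L(t_\nu \Cdot 0)$ has good filtration dimension $\ell(t_\nu) = 2(\nu, \rho^\vee)$ for every $\nu \in X^+$. For $\nu < \lambda+\mu$ this is strictly less than $2(\lambda+\mu,\rho^\vee)$. On the other side, the minimal tilting complex analysis in the proof of Theorem~\ref{thm:genericdirectsummandsimplemodule} shows that $C_\mathrm{min}\bigl(L(x_0\Cdot 0) \otimes L(y_0\Cdot 0)\bigr)$ is concentrated in degrees at most $\ell(x_0)+\ell(y_0)$ with top-degree term a single copy of $T(\omega_{x_0 y_0}\Cdot 0)$; consequently every indecomposable summand of $R$ has good filtration dimension strictly less than $\ell(x_0)+\ell(y_0)$.

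Combining these bounds with the subadditivity $\gfd(M\otimes N) \leq \gfd(M)+\gfd(N)$, which follows from the split-embedding argument on minimal tilting complexes already used in the proof of Theorem~\ref{thm:genericdirectsummandWeylmodule}, the three pieces $R \otimes L_\C(\lambda+\mu)^{[1]}$, $G(x_0,y_0) \otimes S^{[1]}$ and $R \otimes S^{[1]}$ all have good filtration dimension strictly less than $\ell(x)+\ell(y)$, while $G(x_0,y_0) \otimes L_\C(\lambda+\mu)^{[1]}$ has good filtration dimension at most $\ell(x)+\ell(y)$. Since $G(x,y)$ is an indecomposable summand of $L(x\Cdot 0) \otimes L(y\Cdot 0)$ of good filtration dimension exactly $\ell(x)+\ell(y)$, it must appear as a summand of $G(x_0,y_0) \otimes L_\C(\lambda+\mu)^{[1]}$, proving the first assertion. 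For the final isomorphism I would invoke Lemma~\ref{lem:twistedtensorproductindecomposablequantum} with $V = G(x_0,y_0)$ and $L = L_\C(\lambda+\mu)$: the hypothesis that $G(x_0,y_0)$ has simple $\G_1$-socle yields that $G(x_0,y_0) \otimes L_\C(\lambda+\mu)^{[1]}$ is already indecomposable, forcing $G(x,y) \cong G(x_0,y_0) \otimes L_\C(\lambda+\mu)^{[1]}$.

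The main obstacle I anticipate is securing the uniformity of the gfd bound on summands of $R$: one needs that \emph{every} indecomposable summand, singular as well as regular, has good filtration dimension strictly less than $\ell(x_0)+\ell(y_0)$. This rests on the top-degree calculation of $C_\mathrm{min}\bigl(L(x_0\Cdot 0)\otimes L(y_0\Cdot 0)\bigr)$ together with the fact that $G(x_0,y_0)$ is the unique indecomposable summand whose minimal tilting complex reaches that top degree — a point that has to be extracted cleanly from Theorem~\ref{thm:genericdirectsummandsimplemodule} and its proof (via Lemma~\ref{lem:genericdirectsummandsimplemodulefundamentalgroup} when $x_0$ or $y_0$ lies only in $W_\mathrm{ext}^+$ rather than in $W_\mathrm{aff}^+$).
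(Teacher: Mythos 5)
Your proposal takes a genuinely different route from the paper's. The paper does not decompose the tensor product into four pieces and bound each one; instead it observes that $G(x_0,y_0)$ and $L_\C(\lambda+\mu)^{[1]} \cong L(t_{\lambda+\mu}\Cdot 0) \cong G(t_{\lambda+\mu},e)$ are both \emph{strongly regular} (Remark~\ref{rem:genericdirectsummandstronglyregular}), and then invokes \cite[Lemma 4.3]{GruberLinkageTranslation}, which says that the tensor product of two strongly regular modules is strongly regular with good filtration dimension equal to the \emph{sum} of the two. By Lemma~\ref{lem:lengthdominant} this sum is $\ell(x)+\ell(y)$, so $G(x_0,y_0)\otimes L_\C(\lambda+\mu)^{[1]}$ automatically has a regular indecomposable direct summand of good filtration dimension $\ell(x)+\ell(y)$; being a direct summand of $L(x\Cdot 0)\otimes L(y\Cdot 0)$, that summand must be $G(x,y)$ by uniqueness. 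The indecomposability step via Lemma~\ref{lem:twistedtensorproductindecomposablequantum} is the same as yours.

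There is, however, a genuine gap in the step you yourself flagged as the main obstacle, and your proposed resolution does not repair it. You need every indecomposable direct summand of $R$ (the complement of $G(x_0,y_0)$ in $L(x_0\Cdot 0)\otimes L(y_0\Cdot 0)$) to have good filtration dimension strictly less than $\ell(x_0)+\ell(y_0)$. This is established in Remark~\ref{rem:genericdirectsummandsimplemodulelastterm} only for $x_0,y_0\in W_\mathrm{aff}^+$; for $x_0,y_0\in W_\mathrm{ext}^+\setminus W_\mathrm{aff}^+$ (which occurs precisely when the restricted weights lie outside the $\Z\Phi$-coset of $0$, the generic situation here) it is simply false. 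The top-degree term of the tensor-product complex is $T(\omega_{x_0}\Cdot 0)\otimes T(\omega_{y_0}\Cdot 0)$, and when $\omega_{x_0},\omega_{y_0}\neq e$ this tilting module has negligible direct summands besides $T(\omega_{x_0}\omega_{y_0}\Cdot 0)$; these can survive into $C_\mathrm{min}\bigl(L(x_0\Cdot 0)\otimes L(y_0\Cdot 0)\bigr)_{\ell(x_0)+\ell(y_0)}$, giving \emph{singular} indecomposable summands of $R$ with good filtration dimension exactly $\ell(x_0)+\ell(y_0)$. (Concretely, in type $\mathrm{A}_1$ with $\ell=5$ and $x_0\Cdot 0=y_0\Cdot 0=3$, one has $L(3)\otimes L(3)\cong T(0)\oplus T(4)\oplus T(6)$ with $\ell(x_0)+\ell(y_0)=0$; the summand $R=T(4)\oplus T(6)$ has $\gfd(R)=0$, not $<0$.) Invoking Lemma~\ref{lem:genericdirectsummandsimplemodulefundamentalgroup} does not help, since $T^\omega$ only controls the regular part of a tensor product (Lemma~\ref{lem:translationtensorfundamentalgroup}), not the singular summands you are trying to bound. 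The argument can be saved, but only by also appealing to regularity: any summand of $R$ with good filtration dimension $\ell(x_0)+\ell(y_0)$ is singular (by the uniqueness in Theorem~\ref{thm:genericdirectsummandsimplemodule}), hence its tensor product with anything is singular and so cannot contain the regular module $G(x,y)$. With that extra step your decomposition argument goes through; without it, the good-filtration-dimension bounds alone do not suffice.
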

\begin{proof}
	Recall from Remark \ref{rem:genericdirectsummandstronglyregular} that the generic direct summand $G(x_0,y_0)$ is strongly regular of good filtration dimension $\ell(x_0)+\ell(y_0)$, and that $L_\C(\lambda+\mu)^{[1]} \cong L(t_{\lambda+\mu}\Cdot0) \cong G(t_{\lambda+\mu},e)$ is strongly regular of good filtration dimension $\ell(t_{\lambda+\mu})$.
	The tensor product $G(x_0,y_0) \otimes L_\C(\lambda+\mu)^{[1]}$ is strongly regular of good filtration dimension
	\[ \ell(x_0)+\ell(y_0) + \ell(t_{\lambda+\mu}) = \ell(x) + \ell(y) \]
	by Lemma \ref{lem:lengthdominant} and \cite[Lemma 4.3]{GruberLinkageTranslation}.
	Thus, $G(x_0,y_0) \otimes L_\C(\lambda+\mu)^{[1]}$ has a regular indecomposable direct summand $M$ with good filtration dimension $\ell(x) + \ell(y)$.
	Furthermore, $G(x_0,y_0) \otimes L_\C(\lambda+\mu)^{[1]}$ is a direct summand of the tensor product
	\[ \big( L(x_0\Cdot0) \otimes L(y_0\Cdot0) \big) \otimes \big( L_\C(\lambda) \otimes L_\C(\mu) \big)^{[1]} \cong L(x\Cdot0) \otimes L(y\Cdot0) , \]
	and it follows that $M$ is a direct summand of $L(x\Cdot0) \otimes L(y\Cdot0)$.
	As $G(x,y)$ is the unique regular indecomposable direct summand of $L(x\Cdot0) \otimes L(y\Cdot0)$ with good filtration dimension $\ell(x) + \ell(y)$, we conclude that $G(x,y) \cong M$ is a direct summand of $G(x_0,y_0) \otimes L_\C(\lambda+\mu)^{[1]}$.
	If $G(x_0,y_0)$ has simple socle as a $\G_1$-module then $G(x_0,y_0) \otimes L_\C(\lambda+\mu)^{[1]}$ is indecomposable by Lemma \ref{lem:twistedtensorproductindecomposablequantum}, and so
	\[ G(x,y) \cong G(x_0,y_0) \otimes L_\C(\lambda+\mu)^{[1]} , \]
	as claimed.
\end{proof}

\begin{Remark} \label{rem:regularpartFrobeniusquantum}
	Suppose that we are in the quantum case.
	If all regular indecomposable direct summands of $L(x_0\Cdot0) \otimes L(y_0\Cdot0)$ have simple socles as $\G_1$-modules then we can give a complete description of $\big( L(x\Cdot0) \otimes L(y\Cdot0) \big)_\mathrm{reg}$.
	Concretely, let
	\[ \big( L(x_0\Cdot0) \otimes L(y_0\Cdot0) \big)_\mathrm{reg} \cong M_1 \oplus \cdots \oplus M_r , \]
	where $M_1,\ldots,M_r$ are regular and have simple socles as $\G_1$-modules, and write
	\[ L_\C(\lambda) \otimes L_\C(\mu) \cong \bigoplus_{\nu\in X^+} L_\C(\nu)^{\oplus d_{\lambda,\mu}^\nu} \]
	for certain multiplicities $d_{\lambda,\mu}^\nu \in \Z_{\geq 0}$.
	Note that $M \otimes L^{[1]}$ is regular for every regular $\G$-module $M$ and every simple $\mathfrak{g}$-module $L$, because the regular $\G$-module $M$ is a direct summand of $M \otimes L^{[1]} \otimes L^{*[1]}$.
	Arguing as in the proof of Theorem \ref{thm:lusztigtensorproductgenericdirectsummand}, we see that
	\[ \big( L(x\Cdot0) \otimes L(y\Cdot0) \big)_\mathrm{reg} \cong \bigoplus_{\nu \in X^+} \bigoplus_{i=1}^r \big( M_i \otimes L_\C(\nu)^{[1]} \big)^{\oplus d_{\lambda,\mu}^\nu} , \]
	where $M_i \otimes L_\C(\nu)^{[1]}$ is regular and indecomposable for all $\nu \in X^+$ and $i=1,\ldots,r$.
\end{Remark}

Our next goal is to establish a modular analogue of Theorem \ref{thm:lusztigtensorproductgenericdirectsummand}.
One key argument in the proof of the theorem was that the (quantum) Frobenius twist of a simple $\mathfrak{g}$-module is always strongly regular, because it is a simple $U_\zeta(\mathfrak{g})$-module of $\ell$-regular highest weight.
Since in the modular case, the domain of the Frobenius twist functor is the non-semisimple category $\Rep(\G)$, we will need to replace this argument by the following result:

\begin{Proposition} \label{prop:Frobeniustwistregular}
	Suppose that we are in the modular case.
	Let $M$ be a $\G$-module in $\Rep_\nu(\G)$, for some $\nu \in \overline{C}_\mathrm{fund} \cap X$.
	Then the Frobenius twist $M^{[1]}$ is strongly regular.
	More precisely, define
	\[ d = d_M \coloneqq \max\big\{ 2 \cdot (\gamma,\rho^\vee) \mathrel{\big|} \gamma\in X^+ \text{ with } [M:L(\gamma)] \neq 0 \big\} . \]
	Then
	\begin{enumerate}
	 \item $C_\mathrm{min}\big(M^{[1]}\big)_i = 0$ for $i > d$;
	 \item $C_\mathrm{min}\big(M^{[1]}\big)_{d-1}$ is negligible;
	 \item $C_\mathrm{min}\big(M^{[1]}\big)_d \cong T(\omega_{t_\nu}\Cdot0)^{\oplus r}$ for some $r>0$.
	\end{enumerate}
\end{Proposition}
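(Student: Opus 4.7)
My plan is to argue by induction on the composition length of $M$. The base case, where $M = L(\gamma)$ is simple with $\gamma \in W_\mathrm{aff}\Cdot\nu$, follows directly from Steinberg's tensor product theorem: it gives $M^{[1]} \cong L(\ell\gamma) = L(t_\gamma\Cdot 0)$, and the weight $\ell\gamma$ is automatically $\ell$-regular because $0 < (\rho,\beta^\vee) < \ell$ for all $\beta \in \Phi^+$. Hence $t_\gamma \in W_\mathrm{ext}^+$, Lemma~\ref{lem:lengthdominant} yields $\ell(t_\gamma) = 2(\gamma,\rho^\vee) = d$, and $\gamma \equiv \nu \pmod{\Z\Phi}$ forces $\omega_{t_\gamma} = \omega_{t_\nu}$. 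The three conclusions are then immediate from Proposition~\ref{prop:minimalcomplexsimplemodule}.

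For the inductive step, I would choose a composition factor $L(\gamma_{\max})$ with $\gamma_{\max}$ maximal in the dominance order on $X^+$ among composition factors of $M$ (which necessarily satisfies $2(\gamma_{\max},\rho^\vee) = d$, because $\gamma \mapsto 2(\gamma,\rho^\vee)$ is order-preserving) and consider the short exact sequence $0 \to A \to M \to L(\gamma_{\max}) \to 0$. Frobenius twisting produces $0 \to A^{[1]} \to M^{[1]} \to L(\ell\gamma_{\max}) \to 0$. The key tool is the multiplicity formula
\[ \bigl[ T(\omega_{t_\nu}\Cdot 0) : C_\mathrm{min}(N)_i \bigr]_\oplus = \dim \mathrm{Ext}^i_\G\bigl( N, L(\omega_{t_\nu}\Cdot 0) \bigr) , \]
which holds because $\omega_{t_\nu}\Cdot 0 \in C_\mathrm{fund} \cap X$ makes $T(\omega_{t_\nu}\Cdot 0) = L(\omega_{t_\nu}\Cdot 0)$ both tilting and simple; it can be verified by a direct computation in $K^b(\Tilt(\G))$ using that the differentials of the minimal complex are radical maps. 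Combined with the linkage principle, which places $M^{[1]}$ in $\Rep_{\omega_{t_\nu}\Cdot 0}(\G)$ and identifies $T(\omega_{t_\nu}\Cdot 0)$ as the unique non-negligible indecomposable tilting module in this block, the proposition reduces to three $\mathrm{Ext}$-computations: part (1) amounts to $\gfd(M^{[1]}) \leq d$ (by subadditivity of $\gfd$ along a composition series); part (2) to $\mathrm{Ext}^{d-1}_\G\bigl(M^{[1]}, L(\omega_{t_\nu}\Cdot 0)\bigr) = 0$; and part (3) to $\mathrm{Ext}^d_\G\bigl(M^{[1]}, L(\omega_{t_\nu}\Cdot 0)\bigr) \neq 0$. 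Both vanishing and non-vanishing are read off from the long exact sequence in $\mathrm{Ext}^*_\G(-, L(\omega_{t_\nu}\Cdot 0))$ attached to the twisted short exact sequence, using the base case for $L(\ell\gamma_{\max})$ and the inductive hypothesis for $A$.

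The main obstacle is to avoid the subcase $d_A = d - 1$, in which the connecting map $\mathrm{Ext}^{d-1}_\G(A^{[1]}, L(\omega_{t_\nu}\Cdot 0)) \to \mathrm{Ext}^d_\G(L(\ell\gamma_{\max}), L(\omega_{t_\nu}\Cdot 0)) = \kk$ could conceivably be surjective and so annihilate the class required for part (3). This subcase is ruled out by a parity observation: since $(\alpha,\rho^\vee) = 1$ for every simple root $\alpha$, one has $(\gamma,\rho^\vee) = \mathrm{ht}(\gamma) \in \Z$ for all $\gamma \in \Z\Phi$; but the composition factors of $M \in \Rep_\nu(\G)$ all have highest weights in the single coset $\nu + \Z\Phi$, so $d_M - d_A \in 2\Z_{\geq 0}$, forcing either $d_A = d$ or $d_A \leq d - 2$. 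In the first subcase, $\mathrm{Ext}^{d-1}(A^{[1]}, L(\omega_{t_\nu}\Cdot 0)) = 0$ by the inductive hypothesis (part (2) applied to $A$); in the second, it vanishes because $d - 1 > d_A = \gfd(A^{[1]})$. Either way, the base-case input $\mathrm{Ext}^d(L(\ell\gamma_{\max}), L(\omega_{t_\nu}\Cdot 0)) = \kk$ survives in the long exact sequence and contributes to $\mathrm{Ext}^d(M^{[1]}, L(\omega_{t_\nu}\Cdot 0))$, delivering part (3), while the parallel vanishing in degree $d - 1$ gives part (2).
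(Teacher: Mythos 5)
Your base case and the parity observation at the end are correct and match the paper, but the inductive step has two genuine gaps. First, the short exact sequence $0 \to A \to M \to L(\gamma_{\max}) \to 0$ need not exist: a composition factor whose highest weight is maximal in the dominance order need not lie in the head of $M$. For instance, if $M = \nabla(\gamma)$ is a dual Weyl module with at least two composition factors, the maximal factor $L(\gamma)$ sits in the socle, not the head, so there is no such quotient. (Also, maximality of $\gamma_{\max}$ in the dominance \emph{partial} order does not force $2(\gamma_{\max},\rho^\vee) = d$; two incomparable maximal weights can have different values of $(\cdot,\rho^\vee)$, so you would at least need to pick $\gamma_{\max}$ to maximize the pairing directly.) The paper avoids this entirely by working with an \emph{arbitrary} short exact sequence $0 \to L \to M \to N \to 0$ and using Lemma 2.17 of \cite{GruberMinimalTilting} to produce split embeddings $C_\mathrm{min}(M^{[1]})_i \stackrel{\oplus}{\subseteq} C_\mathrm{min}(L^{[1]})_i \oplus C_\mathrm{min}(N^{[1]})_i$, then running a three-case analysis on $(d_L,d_N)$ driven by the same parity observation.

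Second, and independently, the asserted multiplicity formula
\[
\bigl[ C_\mathrm{min}(N)_i : T(\omega_{t_\nu}\Cdot 0) \bigr]_\oplus \;=\; \dim \mathrm{Ext}^i_\G\bigl( N, L(\omega_{t_\nu}\Cdot 0) \bigr)
\]
is false. Already for $\G$ of type $\mathrm{A}_1$ and $N = \Delta(2\ell-2)$, Proposition~\ref{prop:minimalcomplexWeylmodule} gives $C_\mathrm{min}(N) = [\,T(2\ell-2) \to T(0)\,]$ in degrees $0,1$, so $T(0)$ has multiplicity $1$ in degree $1$, whereas $\mathrm{Ext}^1(\Delta(2\ell-2), L(0)) = \mathrm{Ext}^1(\Delta(2\ell-2), \nabla(0)) = 0$. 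Reversing the direction of $\mathrm{Ext}$ does not help either, since $\mathrm{Hom}(L(0), \Delta(2\ell-2)) = \kk$ while $T(0)$ has multiplicity $0$ in $C_\mathrm{min}(N)_0 = T(2\ell-2)$. The underlying problem is that $T(\omega_{t_\nu}\Cdot 0)$, despite being simple, receives nonzero radical morphisms from negligible tiltings (e.g.\ the surjection $T(2\ell-2) \twoheadrightarrow T(0)$), so the radicality of the differentials of $C_\mathrm{min}(N)$ does not kill the induced differentials on $\mathrm{Hom}(C_\mathrm{min}(N)_\bullet, L(\omega_{t_\nu}\Cdot 0))$ in the way it does for a minimal projective resolution. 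The $\mathrm{Ext}$-reduction step therefore has to be abandoned; working directly with the split embeddings of minimal complex terms, as the paper does, is the robust route.
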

\begin{proof}
	Note that the composition factors of $M^{[1]}$ are of the form $L(\gamma^\prime)^{[1]} \cong L(\ell\gamma^\prime)$, for $\gamma^\prime \in X^+$ such that $[M:L(\gamma^\prime)] \neq 0$.
	The good filtration dimension of $L(\ell\gamma^\prime) = L(t_{\gamma^\prime}\Cdot0)$ is $\ell(t_{\gamma^\prime}) = 2 \cdot (\gamma^\prime,\rho^\vee)$ by Lemma \ref{lem:lengthdominant}, and using Lemma 1.14 in \cite{GruberMinimalTilting}, it easily follows that $\gfd\big( M^{[1]} \big) \leq d$.
	By \eqref{eq:gfdwfd}, this also implies that $C_\mathrm{min}\big( M^{[1]} \big)_i = 0$ for all $i>d$.
	We prove the remaining claims by induction on the composition length of $M$.
	
	If $M\cong L(\gamma)$ is simple then $M^{[1]} \cong L(\ell\gamma) = L(t_\gamma\Cdot0)$ and $\ell(t_\gamma) = 2 \cdot (\gamma,\rho^\vee) = d$ by Lemma \ref{lem:lengthdominant}.
	Furthermore, we have $\omega_{t_\gamma} = \omega_{t_\nu}$ because the weights $\gamma$ and $\nu$ belong to the same $\Z\Phi$-coset in $X$, so the claim follows from Proposition \ref{prop:minimalcomplexsimplemodule}.
	Now suppose that $M$ has at least two composition factors and fix a short exact sequence
	\[ 0 \to L \to M \to N \to 0 \]
	with $L$ and $N$ non-zero.
	By induction, we may assume that the proposition holds for $L$ and $N$.
	
	Note that by assumption, both $L$ and $N$ belong to the linkage class $\Rep_\nu(\G)$.
	In particular, all highest weights of composition factors of $L$ and of $N$ belong to the same $\Z\Phi$-coset in $X$.
	As $(\beta,\rho^\vee) \in \Z$ for all $\beta\in\Phi$, it follows that $d_L$ and $d_N$ have the same parity.
	Furthermore, by applying Lemma 2.17 in \cite{GruberMinimalTilting} to the short exact sequence
	\[ 0 \to L^{[1]} \to M^{[1]} \to N^{[1]} \to 0 , \]
	we see that there are split embeddings
	\[ C_\mathrm{min}\big( M^{[1]} \big)_i \stackrel{\oplus}{\subseteq} C_\mathrm{min}\big( L^{[1]} \big)_i \oplus C_\mathrm{min}\big( N^{[1]} \big)_i \eqqcolon C_i \]
	for all $i \in \Z$.
	It is straightforward to see that $d = d_M = \max\{ d_L, d_N \}$, and we distinguish three cases:
	\begin{enumerate}
	\item Suppose that $d=d_L > d_N$.
	Then $d_N \leq d_L-2 = d-2$ because $d_L$ and $d_N$ have the same parity, and it follows that $C_\mathrm{min}\big( N^{[1]} \big)_i = 0$ for $i>d-2$.
	In particular, we have
	\[ C_\mathrm{min}\big( M^{[1]} \big)_{d-1} \stackrel{\oplus}{\subseteq} C_\mathrm{min}\big( L^{[1]} \big)_{d-1} \qquad \text{and} \qquad C_\mathrm{min}\big( M^{[1]} \big)_d \stackrel{\oplus}{\subseteq} C_\mathrm{min}\big( L^{[1]} \big)_d \cong T(\omega_{t_\nu}\Cdot0)^{\oplus r} \]
	for some $r>0$, whence $C_\mathrm{min}\big( M^{[1]} \big)_{d-1}$ is negligible.
	Furthermore, Lemma 2.17 in \cite{GruberMinimalTilting} implies that
	\[ C_\mathrm{min}\big(M^{[1]}\big)_d \cong C_\mathrm{min}\big(L^{[1]}\big)_d \cong T(\omega_{t_\gamma} \Cdot 0)^{\oplus r} , \]
	because $C_{d-1} = C_\mathrm{min}\big( L^{[1]} \big)_{d-1}$ is negligible and $C_{d+1}=0$.
	\item Suppose that $d=d_N > d_L$.
	Then
	$d_L \leq d_N-2 = d-2$ because $d_L$ and $d_N$ have the same parity.
	The claim follows precisely as in case (1), with the roles of $L$ and $N$ interchanged.
	\item Suppose that $d = d_L = d_N$.
	Then $C_\mathrm{min}\big( M^{[1]} \big)_{d-1}$ is negligible because
	\[ C_\mathrm{min}\big( M^{[1]} \big)_{d-1} \stackrel{\oplus}{\subseteq} C_\mathrm{min}\big( L^{[1]} \big)_{d-1} \oplus C_\mathrm{min}\big( N^{[1]} \big)_{d-1} , \]
	and again using Lemma 2.17 in \cite{GruberMinimalTilting}, we see that
	\[ C_\mathrm{min}\big( M^{[1]} \big)_d \cong C_\mathrm{min}\big( L^{[1]} \big)_d \oplus C_\mathrm{min}\big( N^{[1]} \big)_d \cong T(\omega_{t_\nu}\Cdot0)^{\oplus r} \]
	for some $r>0$, because $C_{d-1}$ is negligible and $C_{d+1}=0$. \qedhere
	\end{enumerate}
\end{proof}

Now we are ready to prove a modular analogue of Theorem \ref{thm:lusztigtensorproductgenericdirectsummand}.
Recall that we fix $x,y \in W_\mathrm{ext}^+$ and write
$x\Cdot 0 = \lambda^\prime + \ell \lambda$ and $y \Cdot0 = \mu^\prime + \ell \mu $
with $\lambda^\prime,\mu^\prime\in X_1$ and $\lambda,\mu \in X^+$. As before, we set
\[ x_0 \coloneqq t_{-\lambda} x \qquad \text{and} \qquad y_0 \coloneqq t_{-\mu} y . \]
\begin{Definition}
	Let $M(\lambda,\mu)$ be the unique indecomposable direct summand of $L(\lambda) \otimes L(\mu)$ that has the simple $\G$-module $L(\lambda+\mu)$ as a composition factor.
\end{Definition}
Equivalently, $M(\lambda,\mu)$ can be defined as the unique indecomposable direct summand of $L(\lambda) \otimes L(\mu)$ with a non-zero $(\lambda+\mu)$-weight space.

\begin{Theorem} \label{thm:steinbergtensorproductgenericdirectsummand}
	Suppose that we are in the modular case.
	Then $G(x,y)$ is a direct summand of the tensor product $G(x_0,y_0) \otimes M(\lambda,\mu)^{[1]}$.
	If $G(x_0,y_0)$ is indecomposable as a $\G_1$-module then
	\[ G(x,y) \cong G(x_0,y_0) \otimes M(\lambda,\mu)^{[1]} . \]
\end{Theorem}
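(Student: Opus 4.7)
The plan is to mirror the proof of Theorem~\ref{thm:lusztigtensorproductgenericdirectsummand}, substituting $L_\C(\lambda+\mu)^{[1]}$ with $M(\lambda,\mu)^{[1]}$ and replacing the quantum indecomposability criterion (Lemma~\ref{lem:twistedtensorproductindecomposablequantum}) by its modular counterpart (Corollary~\ref{cor:twistedtensorproductindecomposable}). The essential new input is that, although $M(\lambda,\mu)^{[1]}$ is no longer simple, Proposition~\ref{prop:Frobeniustwistregular} still exhibits it as \emph{strongly regular} of the correct good filtration dimension.

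The first step is therefore to show that $M(\lambda,\mu)^{[1]}$ is strongly regular of good filtration dimension $\ell(t_\lambda)+\ell(t_\mu)$. Since $M(\lambda,\mu)$ is indecomposable, it lies in a single linkage class (the one containing $L(\lambda+\mu)$), so Proposition~\ref{prop:Frobeniustwistregular} applies. Any composition factor $L(\gamma)$ of $M(\lambda,\mu)$ satisfies $\gamma\leq\lambda+\mu$; using that $(\alpha,\rho^\vee)>0$ for every positive root $\alpha$, this gives $(\gamma,\rho^\vee)\leq(\lambda+\mu,\rho^\vee)$, with equality realized by the composition factor $L(\lambda+\mu)$. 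Thus $d_{M(\lambda,\mu)}=2(\lambda+\mu,\rho^\vee)=\ell(t_\lambda)+\ell(t_\mu)$ by Lemma~\ref{lem:lengthdominant}, and Proposition~\ref{prop:Frobeniustwistregular} delivers the desired strong regularity. Combining this with the strong regularity of $G(x_0,y_0)$ from Remark~\ref{rem:genericdirectsummandstronglyregular} via \cite[Lemma 4.3]{GruberLinkageTranslation}, the tensor product $G(x_0,y_0)\otimes M(\lambda,\mu)^{[1]}$ is strongly regular of good filtration dimension $\ell(x_0)+\ell(y_0)+\ell(t_\lambda)+\ell(t_\mu)=\ell(x)+\ell(y)$, where the last identity uses Lemma~\ref{lem:lengthdominant} for $x=t_\lambda x_0$ and $y=t_\mu y_0$.

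It follows that $G(x_0,y_0)\otimes M(\lambda,\mu)^{[1]}$ possesses a unique regular indecomposable direct summand $N$ of good filtration dimension $\ell(x)+\ell(y)$. By Steinberg's tensor product theorem,
\[ L(x\Cdot0)\otimes L(y\Cdot0)\cong L(x_0\Cdot0)\otimes L(y_0\Cdot0)\otimes\bigl(L(\lambda)\otimes L(\mu)\bigr)^{[1]}, \]
and since $M(\lambda,\mu)$ is a direct summand of $L(\lambda)\otimes L(\mu)$, the summand $N$ is also a direct summand of $L(x\Cdot0)\otimes L(y\Cdot0)$. The uniqueness statement in Theorem~\ref{thm:genericdirectsummandsimplemodule} then forces $N\cong G(x,y)$. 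If in addition $G(x_0,y_0)$ is indecomposable as a $\G_1$-module, Corollary~\ref{cor:twistedtensorproductindecomposable} (applied with $V=G(x_0,y_0)$ and $M=M(\lambda,\mu)$) shows that $G(x_0,y_0)\otimes M(\lambda,\mu)^{[1]}$ is already indecomposable over $\G$, and we conclude $G(x,y)\cong G(x_0,y_0)\otimes M(\lambda,\mu)^{[1]}$. The main technical point is the strong regularity of $M(\lambda,\mu)^{[1]}$; once Proposition~\ref{prop:Frobeniustwistregular} is invoked, the rest of the argument is essentially a direct translation of the quantum case.
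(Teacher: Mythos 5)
Your proof is correct and follows exactly the paper's own argument: establish strong regularity of $M(\lambda,\mu)^{[1]}$ via Proposition~\ref{prop:Frobeniustwistregular}, compute its good filtration dimension as $2(\lambda+\mu,\rho^\vee)=\ell(t_\lambda)+\ell(t_\mu)$, use \cite[Lemma 4.3]{GruberLinkageTranslation} to add up good filtration dimensions, invoke uniqueness from Theorem~\ref{thm:genericdirectsummandsimplemodule}, and apply Corollary~\ref{cor:twistedtensorproductindecomposable} for the indecomposability statement. The only addition is that you spell out the verification that $d_{M(\lambda,\mu)}=2(\lambda+\mu,\rho^\vee)$, which the paper leaves implicit.
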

\begin{proof}
	Recall from Remark \ref{rem:genericdirectsummandstronglyregular} that the generic direct summand $G(x_0,y_0)$ is strongly regular of good filtration dimension $\ell(x_0)+\ell(y_0)$.
	The $\G$-module $M(\lambda,\mu)^{[1]}$ is strongly regular of good filtration dimension $2 \cdot (\lambda+\mu,\rho^\vee)$ by Proposition \ref{prop:Frobeniustwistregular}, and using Lemma \ref{lem:lengthdominant} and \cite[Lemma 4.3]{GruberLinkageTranslation}, it follows that
	the tensor product $G(x_0,y_0) \otimes M(\lambda,\mu)^{[1]}$ is strongly regular of good filtration dimension
	\[ \ell(x_0)+\ell(y_0) + 2 \cdot (\lambda+\mu,\rho^\vee) = \ell(x) + \ell(y) . \]
	%by Lemma \ref{lem:lengthdominant} and \cite[Lemma 4.3]{GruberLinkageTranslation}.
	Thus, $G(x_0,y_0) \otimes M(\lambda,\mu)^{[1]}$ has a regular indecomposable direct summand $M$ with good filtration dimension $\ell(x) + \ell(y)$.
	Furthermore, $G(x_0,y_0) \otimes M(\lambda,\mu)^{[1]}$ is a direct summand of the tensor product
	\[ \big( L(x_0\Cdot0) \otimes L(y_0\Cdot0) \big) \otimes \big( L(\lambda) \otimes L(\mu) \big)^{[1]} \cong L(x\Cdot0) \otimes L(y\Cdot0) , \]
	and it follows that $M$ is a direct summand of $L(x\Cdot0) \otimes L(y\Cdot0)$.
	As $G(x,y)$ is the unique regular indecomposable direct summand of $L(x\Cdot0) \otimes L(y\Cdot0)$ with good filtration dimension $\ell(x) + \ell(y)$, we conclude that $G(x,y) \cong M$ is a direct summand of $G(x_0,y_0) \otimes M(\lambda,\mu)^{[1]}$.
	If $G(x_0,y_0)$ is indecomposable as a $\G_1$-module then $G(x_0,y_0) \otimes M(\lambda,\mu)^{[1]}$ is indecomposable by Corollary \ref{cor:twistedtensorproductindecomposable}, and so
	\[ G(x,y) \cong G(x_0,y_0) \otimes M(\lambda,\mu)^{[1]} , \]
	as claimed.
\end{proof}

\begin{Remark} \label{rem:regularpartFrobeniusmodular}
	Suppose that we are in the modular case.
	If all regular indecomposable direct summands of $L(x_0\Cdot0) \otimes L(y_0\Cdot0)$ are strongly regular and indecomposable as $\G_1$-modules then we can give a description of $\big( L(x\Cdot0) \otimes L(y\Cdot0) \big)_\mathrm{reg}$.
	Concretely, let
	\[ \big( L(x_0\Cdot0) \otimes L(y_0\Cdot0) \big)_\mathrm{reg} \cong M_1 \oplus \cdots \oplus M_r , \]
	where $M_1,\ldots,M_r$ are strongly regular and indecomposable as $\G_1$-modules, and fix a Krull-Schmidt decomposition
	\[ L(\lambda) \otimes L(\mu) \cong N_1 \oplus N_2 \oplus \cdots \oplus N_s . \]
	Then, arguing as in the proof of Theorem \ref{thm:lusztigtensorproductgenericdirectsummand}, we see that
	\[ \big( L(x\Cdot0) \otimes L(y\Cdot0) \big)_\mathrm{reg} \cong \bigoplus_{i=1}^r \bigoplus_{j=1}^s \big( M_i \otimes N_j^{[1]} \big) , \]
	where $M_i \otimes N_j^{[1]}$ is indecomposable and strongly regular for all $i=1,\ldots,r$ and $j=1,\ldots,s$.
\end{Remark}

In view of Theorem \ref{thm:steinbergtensorproductgenericdirectsummand}, it is important to determine the $\G$-modules $M(\lambda,\mu)$.
Note that the weights $\lambda$ and $\mu$ can be written as $\lambda = \sum_{i\geq 0} \ell^i \cdot \lambda_i$ and $\mu = \sum_{i\geq 0} \ell^i \cdot \mu_i$, with $\lambda_i,\mu_i\in X_1$ for all $i \geq 0$.
By iterating Steinberg's tensor product theorem, we obtain tensor product decompositions
\[ L(\lambda) \cong \bigotimes_{i\geq 0} L(\lambda_i)^{[i]} \qquad \text{and} \qquad L(\mu) \cong \bigotimes_{i\geq 0} L(\mu_i)^{[i]} . \]
We record a corollary of Lemma \ref{lem:Donkintensorproductindecomposable}, which allows us to describe $M(\lambda,\mu)$ as a tensor product of Frobenius twists of the different $M(\lambda_i,\mu_i)$ in many cases.

\begin{Corollary} \label{cor:highestweightdirectsummandFrobenius}
	Suppose that we are in the modular case, and that $M(\lambda_i,\mu_i)$ is indecomposable as a $\G_1$-module for all $i\geq 0$.
	Then
	\[ M(\lambda,\mu) \cong \bigotimes_{i \geq 0} M(\lambda_i,\mu_i)^{[i]} . \]
\end{Corollary}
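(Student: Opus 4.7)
The plan is to combine iterated Steinberg's tensor product theorem with the indecomposability criterion from Corollary \ref{cor:twistedtensorproductindecomposableiterated} and then pin down the summand $M(\lambda,\mu)$ via its characterizing weight space.

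First, I would iterate Steinberg's tensor product theorem to obtain
\[ L(\lambda) \otimes L(\mu) \cong \bigotimes_{i \geq 0} \big( L(\lambda_i) \otimes L(\mu_i) \big)^{[i]} . \]
Since only finitely many of the $\lambda_i$ and $\mu_i$ are non-zero, this is really a finite tensor product. For each $i$, fix the decomposition $L(\lambda_i) \otimes L(\mu_i) \cong M(\lambda_i,\mu_i) \oplus R_i$ from the definition of $M(\lambda_i,\mu_i)$. Distributing the tensor product then exhibits
\[ N \coloneqq \bigotimes_{i \geq 0} M(\lambda_i,\mu_i)^{[i]} \]
as a direct summand of $L(\lambda) \otimes L(\mu)$.

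Next I would check the two defining properties of $M(\lambda,\mu)$ for $N$. Each $M(\lambda_i,\mu_i)$ has a non-zero $(\lambda_i+\mu_i)$-weight space by definition, and Frobenius twisting by $[i]$ multiplies weights by $\ell^i$. Hence $N$ has a non-zero weight space of weight $\sum_{i\geq 0} \ell^i(\lambda_i+\mu_i) = \lambda+\mu$. For indecomposability, I would invoke the hypothesis that each $M(\lambda_i,\mu_i)$ is indecomposable as a $\G_1$-module (which in particular implies indecomposability as a $\G$-module for the top index) and apply Corollary \ref{cor:twistedtensorproductindecomposableiterated} to conclude that $N$ is indecomposable as a $\G$-module.

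Finally, since $N$ is an indecomposable direct summand of $L(\lambda) \otimes L(\mu)$ with a non-zero $(\lambda+\mu)$-weight space, the uniqueness built into the definition of $M(\lambda,\mu)$ forces $N \cong M(\lambda,\mu)$, which is the desired identity. No step looks genuinely difficult; the only place to pay attention is the bookkeeping when applying the iterated indecomposability criterion, i.e.\ ensuring that the hypothesis about $\G_1$-indecomposability of each factor is genuinely what Corollary \ref{cor:twistedtensorproductindecomposableiterated} requires.
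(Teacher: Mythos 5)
Your proof is correct and follows essentially the same route as the paper's: exhibit $\bigotimes_i M(\lambda_i,\mu_i)^{[i]}$ as a direct summand of $L(\lambda)\otimes L(\mu)$ via Steinberg, note the $(\lambda+\mu)$-weight space is non-zero, and invoke Corollary \ref{cor:twistedtensorproductindecomposableiterated} for indecomposability. Your parenthetical remark that $\G_1$-indecomposability implies $\G$-indecomposability (needed for the top factor in that corollary) is accurate and handles the only bookkeeping subtlety.
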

\begin{proof}
	Since $M(\lambda_i,\mu_i)$ is a direct summand of $L(\lambda_i) \otimes L(\mu_i)$, the tensor product $M \coloneqq \bigotimes_i M(\lambda_i,\mu_i)^{[i]}$ is a direct summand of
	\[ L(\lambda) \otimes L(\mu) \cong \bigotimes_{i \geq 0} \big( L(\lambda_i) \otimes L(\mu_i) \big)^{[i]} . \]
	Furthermore, the $\lambda+\mu$-weight space of $M$ is non-zero because the $\lambda_i+\mu_i$ weight space of $M(\lambda_i,\mu_i)$ is non-zero for all $i$, so it remains to show that $M$ is indecomposable. This follows from Corollary \ref{cor:twistedtensorproductindecomposableiterated}, by our assumption on the $\G$-modules $M(\lambda_i,\mu_i)$.
\end{proof}

\section{Results in small rank} \label{sec:smallrank}

In this section, we compute examples of generic direct summands for $\G$ of type $\mathrm{A}_1$ and $\mathrm{A}_2$.
In each of the two cases, we consider first the generic direct summands $G(x,y)$ of tensor products $L(x\Cdot0) \otimes L(y\Cdot0)$ of simple $\G$-modules, and then turn our attention to the generic direct summands $G_\nabla(x,y)$ of tensor products $\nabla(x\Cdot0) \otimes \nabla(y\Cdot0)$ of dual Weyl modules, for $x,y \in W_\mathrm{ext}^+$.

\subsection{Type \texorpdfstring{$\mathrm{A}_1$}{A1}} \label{sec:A1}

Suppose (unless otherwise stated) that $\G$ is of type $\mathrm{A}_1$.
Then $\Phi^+ = \Pi = \{ \alpha_\mathrm{h} \}$ and we write $\alpha\coloneqq \alpha_\mathrm{h}$ for the unique positive root.
The weight lattice $X$ is a free $\Z$-module of rank $1$, spanned by the fundamental dominant weight $\varpi_\alpha$, and we can identify $X$ with $\Z$ via $\varpi_\alpha \mapsto 1$.
Under this identification, the unique positive root $\alpha$ is mapped to $2$, $\rho = \frac12 \cdot \alpha$ is mapped to $1$ and the scalar product $(-\,,-)$ on the euclidean space $X_\R = X \otimes_\Z \R \cong \R$ corresponds to the multiplication of real numbers.
Furthermore, the set $X^+$ of dominant weights is identified with the set $\Z_{\geq 0}$ of non-negative integers.
Accordingly, we denote the $\G$-modules $L(a\varpi_\alpha)$, $\nabla(a\varpi_\alpha)$, $\Delta(a\varpi_\alpha)$ and $T(a\varpi_\alpha)$ by $L(a)$, $\nabla(a)$, $\Delta(a)$ and $T(a)$, respectively, for $a\in\Z_{\geq 0}$.
The set of $\ell$-restricted weights is $X_1 = \{ 0 , \ldots , \ell-1 \}$.

\subsubsection*{Simple modules}

Our first aim is to determine the regular indecomposable direct summands of tensor products of simple $\G$-modules.
We will use the following \emph{Clebsch-Gordan formulas}:

\begin{Definition}
	For $a,b\in \Z_{\geq 0}$, we set
	\[ \mathrm{CG}(a,b) = \big\{ \abs{a-b}+2i \mathrel{\big|} i=0,\ldots,\min\{a,b\} \big\}\]
	and
	\[ \mathrm{CG}_\ell(a,b) \coloneqq \mathrm{CG}(a,b) \setminus \big\{ 2\ell-2-c \mathrel{\big|} c \in \mathrm{CG}(a,b) \text{ with } c \geq \ell \big\} . \]
\end{Definition}

\begin{Remark} \label{rem:exA1ClebschGordanclassical}
	The classical Clebsch-Gordan formula for $\mathfrak{g}=\mathfrak{sl}_2(\C)$ states that
	\[ L_\C(a) \otimes L_\C(b) \cong \bigoplus_{c \in \mathrm{CG}(a,b)} L_\C(c) \]
	for all $a,b\in \Z_{\geq 0}$.
	This can be proven by comparing characters.
\end{Remark}

The following analogue of the Clebsch-Gordan formula is proven in \cite[Lemma 1.3]{DotyHenke} in the modular case; the proof in the quantum case is completely analogous.
Note that the set $\mathrm{CG}_\ell(a,b)$ is denoted by $W(a,b)$ in \cite{DotyHenke}.

\begin{Lemma} \label{lem:exA1KrullSchidtrestricted}
	For $a,b\in X_1$, we have
	\[ L(a) \otimes L(b) \cong \bigoplus_{c \in \mathrm{CG}_\ell(a,b)} T(c) . \]
\end{Lemma}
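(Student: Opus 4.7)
The plan is to reduce the statement to a character computation. For every $a\in X_1$, the weight $a$ satisfies $a\leq\ell-1$, and a standard calculation in type $\mathrm{A}_1$ shows that the Weyl module $\Delta(a)$ is already simple; hence $L(a)=\Delta(a)=\nabla(a)=T(a)$, with the same conclusion in the quantum case. Consequently $L(a)\otimes L(b)\cong T(a)\otimes T(b)$ is a tilting module, so it admits a Krull--Schmidt decomposition of the form $\bigoplus_{c\geq 0}T(c)^{\oplus m_c}$ for some $m_c\in\Z_{\geq 0}$. Since the highest weight appearing in $L(a)\otimes L(b)$ is at most $a+b\leq 2\ell-2$, only indices $c\in\{0,\ldots,2\ell-2\}$ can occur.

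To determine the multiplicities $m_c$, I would compare formal characters. The characters of $L(a)$ and $L(b)$ coincide with the Weyl characters $\chi_a$ and $\chi_b$, so the classical Clebsch--Gordan formula from Remark~\ref{rem:exA1ClebschGordanclassical} gives
\[ \mathrm{ch}\big(L(a)\otimes L(b)\big)=\sum_{c\in\mathrm{CG}(a,b)}\chi_c. \]
On the other hand, $\mathrm{ch}\big(T(c)\big)=\chi_c$ for $0\leq c\leq\ell-1$, whereas for $\ell\leq c\leq 2\ell-2$ the indecomposable tilting module $T(c)$ has a Weyl filtration with factors $\Delta(c)$ and $\Delta(2\ell-2-c)$, so that $\mathrm{ch}\big(T(c)\big)=\chi_c+\chi_{2\ell-2-c}$.

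The final step is a combinatorial check: for every $c\in\mathrm{CG}(a,b)$ with $c\geq\ell$, the reflected index $c^\prime\coloneqq 2\ell-2-c$ lies in $\{0,\ldots,\ell-2\}$ and again belongs to $\mathrm{CG}(a,b)$. Parity is automatic because $c^\prime\equiv c\equiv a+b\pmod{2}$, while the bounds $|a-b|\leq c^\prime\leq a+b$ follow from the elementary estimates $c\leq a+b\leq 2\ell-2-|a-b|$ (using $a+b+|a-b|=2\max(a,b)\leq 2\ell-2$) and $c\geq\ell\geq 2\ell-2-(a+b)$ (which uses $a+b\geq\ell$, guaranteed by the existence of such a $c$). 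Splitting the Clebsch--Gordan sum at the threshold $c=\ell$ and pairing each $c\geq\ell$ with its partner $c^\prime$ then yields
\[ \sum_{c\in\mathrm{CG}(a,b)}\chi_c=\sum_{c\in\mathrm{CG}_\ell(a,b)}\mathrm{ch}\big(T(c)\big) , \]
and linear independence of the characters $\mathrm{ch}\big(T(c)\big)$ (their leading terms are distinct) forces $m_c=1$ for $c\in\mathrm{CG}_\ell(a,b)$ and $m_c=0$ otherwise.

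The main obstacle is really just the combinatorial matching---checking that $\mathrm{CG}(a,b)$ is stable under $c\mapsto 2\ell-2-c$ on the slice $c\geq\ell$---so that the pairing of Weyl characters into tilting characters accounts exactly for all terms of the Clebsch--Gordan sum. The hypothesis $a,b\in X_1$ is essential here, since it is what forces $a+b\leq 2\ell-2$ and thereby confines the highest weights to the first tilting ``window'' above the Steinberg weight.
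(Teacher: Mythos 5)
Your proof is correct. The paper does not prove this lemma itself but cites Doty--Henke \cite[Lemma~1.3]{DotyHenke} for the modular case (noting the quantum case is analogous), and your argument --- that $L(a)=T(a)$ for $a\in X_1$ in type $\mathrm{A}_1$, so the tensor product is tilting, combined with the character identity $\mathrm{ch}\,T(c)=\chi_c+\chi_{2\ell-2-c}$ for $\ell\leq c\leq 2\ell-2$ and the closure of $\mathrm{CG}(a,b)$ under $c\mapsto 2\ell-2-c$ on the slice $c\geq\ell$ --- is the standard route to this result and essentially reproduces the cited proof.
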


Before we discuss generic direct sumands, let us say some words about the (extended) affine Weyl group of $\G$ and its (dot) action on $X\cong\Z$.
The finite Weyl group $W_\mathrm{fin}$ is cyclic of order $2$, generated by the reflection $s \coloneqq s_{\alpha}$, which acts on $X_\R \cong \R$ via $s(z)=-z$ for $z\in\R$.
The root lattice $\Z\Phi \subseteq X$ identifies with the set $2\Z$ of even integers, and it follows that $\Omega \cong X / \Z\Phi \cong \Z / 2\Z$ is also cyclic of order $2$.
The non trivial element of $\Omega$ is $w \coloneqq t_1s$ (where $t_1$ denotes the translation $z \mapsto z+1$ on $\R$, according to our identification of $X_\R$ with $\R$), and its (dot) action on $\R$ is given by $w\Cdot z = \ell-2-z$, for $z\in\R$.
For $x=t_a s^\varepsilon \in W_\mathrm{ext}$, with $a\in\Z$ and $\varepsilon\in\{0,1\}$, we have
\[ x\Cdot0 = t_a s^\varepsilon \Cdot 0 = \ell a - 2 \varepsilon = \ell \cdot (a - \varepsilon) + \varepsilon \cdot (\ell-2) = t_{a-\varepsilon} w^\varepsilon \Cdot 0 , \]
and it follows that, for all $x\in W_\mathrm{ext}$, we can choose an integer $b\in\Z$ and and element $\omega\in\Omega$ such that $x\Cdot0 = t_b \omega\Cdot0$. Furthermore, $b$ and $\omega$ are unique with this property if $\ell>2$, and we have $b\geq 0$ whenever $x\in W_\mathrm{ext}^+$. 

In the quantum case, the classical Clebsch-Gordan formula and the results from Section \ref{sec:SteinbergLusztigTPtheorem} allow us to determine all regular indecomposable direct summands of tensor products of simple $\G$-modules with highest weights in arbitrary $\ell$-alcoves.

\begin{Lemma} \label{lem:exA1regularpartquantum}
	Suppose that we are in the quantum case.
	For $x,y\in W_\mathrm{ext}^+$, let $a,b\in\Z_{\geq0}$ and $\omega,\omega^\prime\in\Omega$ such that $x\Cdot0=t_a \omega\Cdot0$ and $y\Cdot0=t_b \omega^\prime\Cdot0$. Then
	\[ \big( L(x\Cdot0) \otimes L(y\Cdot0) \big)_\mathrm{reg} \cong \bigoplus_{c\in \mathrm{CG}(a,b)} T^{\omega\omega^\prime} L(\ell c) \]
	and $G(x,y) \cong T^{\omega\omega^\prime} L\big( \ell \cdot (a+b) \big)$.
\end{Lemma}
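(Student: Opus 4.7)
The plan is to reduce to the case $\omega = \omega' = e$ using the $T^\omega$-equivariance results of Section \ref{sec:genericdirectsummands}, and then apply the quantum Steinberg--Lusztig tensor product theorem (Theorem \ref{thm:lusztigtensorproductgenericdirectsummand} and Remark \ref{rem:regularpartFrobeniusquantum}) combined with the classical Clebsch--Gordan formula for $\mathfrak{sl}_2(\C)$.

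For the reduction, I would first observe that since $a, b \geq 0$ the elements $t_a, t_b$ lie in $W_\mathrm{ext}^+$, so $x = t_a \omega$ and $y = t_b \omega'$ are the unique minimal-length coset representatives with $x\Cdot 0 = t_a\omega\Cdot 0$ and $y\Cdot 0 = t_b\omega'\Cdot 0$. A direct check using the definition $T^\omega = T_0^{\omega\Cdot 0}$ on the principal block then gives $L(x\Cdot 0) \cong T^\omega L(t_a\Cdot 0)$ and $L(y\Cdot 0) \cong T^{\omega'} L(t_b\Cdot 0)$. Applying Lemmas \ref{lem:translationtensorfundamentalgroup} and \ref{lem:genericdirectsummandsimplemodulefundamentalgroup} then yields
\[ \bigl(L(x\Cdot 0) \otimes L(y\Cdot 0)\bigr)_\mathrm{reg} \cong T^{\omega\omega'} \bigl(L(t_a\Cdot 0) \otimes L(t_b\Cdot 0)\bigr)_\mathrm{reg} \qquad \text{and} \qquad G(x,y) \cong T^{\omega\omega'} G(t_a, t_b), \]
so it suffices to treat the case $x = t_a$, $y = t_b$.

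In that special case, the decompositions $t_a\Cdot 0 = 0 + \ell\cdot a$ and $t_b\Cdot 0 = 0 + \ell\cdot b$ put us in the setting of Section \ref{sec:SteinbergLusztigTPtheorem} with $x_0 = y_0 = e$, $\lambda = a$ and $\mu = b$. The seed tensor product $L(x_0\Cdot 0) \otimes L(y_0\Cdot 0) = L(0)$ is the one-dimensional trivial module, which is regular (since $0 \in C_\mathrm{fund}$) and obviously has simple socle as a $\G_1$-module, giving $G(e,e) \cong L(0)$. Theorem \ref{thm:lusztigtensorproductgenericdirectsummand} then delivers $G(t_a, t_b) \cong L(0) \otimes L_\C(a+b)^{[1]} \cong L\bigl(\ell\cdot(a+b)\bigr)$, and substituting the classical Clebsch--Gordan decomposition (Remark \ref{rem:exA1ClebschGordanclassical}) $L_\C(a) \otimes L_\C(b) \cong \bigoplus_{c \in \mathrm{CG}(a,b)} L_\C(c)$ into the formula of Remark \ref{rem:regularpartFrobeniusquantum} gives $\bigl(L(t_a\Cdot 0) \otimes L(t_b\Cdot 0)\bigr)_\mathrm{reg} \cong \bigoplus_{c \in \mathrm{CG}(a,b)} L(\ell c)$. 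There is no real obstacle: once the seed case $G(e,e) \cong L(0)$ is identified and the $\G_1$-socle hypothesis is verified trivially from one-dimensionality, the proof amounts to transporting the classical $\mathfrak{sl}_2$-decomposition through a Frobenius twist, with the pre-existing translation-functor technology handling the $\Omega$-twist.
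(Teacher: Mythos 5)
Your proof is correct and follows essentially the same route as the paper: reduce to the $x=t_a$, $y=t_b$ case by $\Omega$-equivariance (Lemmas \ref{lem:translationtensorfundamentalgroup} and \ref{lem:genericdirectsummandsimplemodulefundamentalgroup}), identify the seed $G(e,e)\cong L(0)$, and apply the quantum Steinberg--Lusztig machinery together with the classical $\mathfrak{sl}_2$ Clebsch--Gordan formula. The only cosmetic difference is that you route the regular-part formula through the packaged Remark~\ref{rem:regularpartFrobeniusquantum}, whereas the paper computes $L(t_a\Cdot0)\otimes L(t_b\Cdot0)\cong\bigoplus_{c\in\mathrm{CG}(a,b)}L_\C(c)^{[1]}$ directly and then cites Proposition~\ref{prop:minimalcomplexsimplemodule} for regularity of each summand; the underlying content is identical.

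One small caution: you don't actually need (and shouldn't claim) that $x=t_a\omega$ as elements of $W_\mathrm{ext}$. The hypothesis only gives $x\Cdot0=t_a\omega\Cdot0$, and for $\ell=h=2$ the stabilizer of $0$ in $W_\mathrm{ext}$ under the $\ell$-dilated dot action is nontrivial. The argument survives because $G(x,y)$, $L(x\Cdot0)$, and $\ell(x)$ all depend only on $x\Cdot0$ (since $0\in C_\mathrm{fund}$ forces $x\Cdot C_\mathrm{fund}=x'\Cdot C_\mathrm{fund}$ whenever $x\Cdot0=x'\Cdot0$), so you can apply the two translation lemmas to $t_a\omega$ and transfer the conclusion; but the claimed equality of Weyl group elements is both unnecessary and not guaranteed.
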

\begin{proof}
	We have
	\[ \big( L(x\Cdot0) \otimes L(y\Cdot0) \big)_\mathrm{reg} \cong \big( T^\omega L(t_a\Cdot0) \otimes T^{\omega^\prime} L(t_b\Cdot0) \big)_\mathrm{reg} \cong T^{\omega\omega^\prime} \big( L(t_a\Cdot0) \otimes L(t_b\Cdot0) \big)_\mathrm{reg} \]
	by Lemma \ref{lem:translationtensorfundamentalgroup}, so it suffices to prove that
	\[ \big( L(t_a\Cdot0) \otimes L(t_b\Cdot0) \big)_\mathrm{reg} \cong \bigoplus_{c\in \mathrm{CG}(a,b)} L(\ell c) . \]
	Observe that $L(t_a\Cdot0) = L(\ell a) \cong L_\C(a)^{[1]}$ and $L(t_b\Cdot0) = L(\ell b) \cong L_\C(b)^{[1]}$, and therefore
	\[ L(t_a\Cdot0) \otimes L(t_b\Cdot0) \cong \big( L_\C(a) \otimes L_\C(b) \big)^{[1]} \cong \bigoplus_{c\in \mathrm{CG}(a,b)} L_\C(c)^{[1]} \]
	by Remark \ref{rem:exA1ClebschGordanclassical}.
	Now the first claim follows since $L_\C(c)^{[1]} \cong L(\ell c)$ is regular for all $c\in \mathrm{CG}(a,b)$, by Proposition \ref{prop:minimalcomplexsimplemodule}.
	Furthermore, we have $G(t_a,t_b) \cong L_\C(a+b)^{[1]}$ by Theorem \ref{thm:lusztigtensorproductgenericdirectsummand} because $G(e,e) \cong L(0)$ is the trivial $\G$-module, and so $G(x,y) \cong T^{\omega\omega^\prime} G(t_a,t_b) \cong T^{\omega\omega^\prime} L_\C(a+b)^{[1]}$ by Lemma \ref{lem:genericdirectsummandsimplemodulefundamentalgroup}.
\end{proof}

From now on until Lemma \ref{lem:exA1regularpartmodular} (included), suppose that we are in the modular case.
In order to state a modular analogue of Lemma \ref{lem:exA1regularpartquantum}, we need the following definition and theorem:

\begin{Definition}
	For a sequence $\mathbf{u} = (u_0,u_1,\ldots) \in \Z^\N$ with $0 \leq u_i \leq 2\ell-2$ for all $i$ and $u_i=0$ for all but finitely many $i$, let
	\[ J(\mathbf{u}) \coloneqq \bigotimes_{i\geq 0} T(u_i)^{[i]} . \]
\end{Definition}

The structure of the $\G$-modules $J(\mathbf{u})$ is studied in detail in \cite{DotyHenke}.
Theorem 2.1 in \cite{DotyHenke} gives the Krull-Schmidt decomposition of tensor products of simple $\G$-modules: 

\begin{Theorem}[Doty-Henke] \label{thm:exA1KrullSchmidt}
	Let $a,b\in\Z_{\geq0}$ and write $a=\sum_i a_i \ell^i$ and $b=\sum_i b_i \ell^i$ with $0 \leq a_i,b_i < \ell$ for all $i$.
	Then the Krull-Schmidt decomposition of $L(a) \otimes L(b)$ is given by
	\[ L(a) \otimes L(b) \cong \bigoplus_{\mathbf{u}} J(\mathbf{u}) , \]
	where the direct sum runs over all sequences $\mathbf{u}=(u_0,u_1,\ldots)$ with $u_i \in \mathrm{CG}_\ell(a_i,b_i)$ for all $i$.
\end{Theorem}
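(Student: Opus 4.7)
The plan is to reduce the statement to the $\ell$-restricted case by means of Steinberg's tensor product theorem, and then to verify indecomposability of each summand using the results of Section \ref{sec:SteinbergLusztigTPtheorem}.

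First, writing $a = \sum_{i \geq 0} a_i \ell^i$ and $b = \sum_{i \geq 0} b_i \ell^i$ with $a_i, b_i \in X_1$, iterated application of Steinberg's tensor product theorem together with monoidality of the Frobenius twist functor gives
\[ L(a) \otimes L(b) \cong \bigotimes_{i \geq 0} L(a_i)^{[i]} \otimes \bigotimes_{i \geq 0} L(b_i)^{[i]} \cong \bigotimes_{i \geq 0} \big( L(a_i) \otimes L(b_i) \big)^{[i]} . \]
Applying Lemma \ref{lem:exA1KrullSchidtrestricted} to each inner factor and distributing direct sums over tensor products then yields
\[ L(a) \otimes L(b) \cong \bigoplus_{\mathbf{u}} \bigotimes_{i \geq 0} T(u_i)^{[i]} = \bigoplus_{\mathbf{u}} J(\mathbf{u}) , \]
with $\mathbf{u}$ ranging over the sequences described in the statement. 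Note that each such $\mathbf{u}$ satisfies $u_i = 0$ for $i$ large enough, because $a_i = b_i = 0$ eventually and $\mathrm{CG}_\ell(0,0) = \{0\}$; hence every $J(\mathbf{u})$ is an honest finite tensor product.

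To conclude that this decomposition is the Krull-Schmidt decomposition, it remains to prove that each $J(\mathbf{u})$ is indecomposable. By Corollary \ref{cor:twistedtensorproductindecomposableiterated}, it suffices to show that $T(u_i)$ is indecomposable as a $\G_1$-module for every $i \geq 0$. Since $0 \leq u_i \leq a_i + b_i \leq 2\ell - 2$, the main obstacle is therefore to verify that every tilting module $T(u)$ with $0 \leq u \leq 2\ell-2$ has indecomposable restriction to $\G_1$.

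For $0 \leq u \leq \ell-1$ this is immediate: $T(u) = L(u)$ is simple with $\ell$-restricted highest weight, hence is simple (and so indecomposable) as a $\G_1$-module. For $\ell \leq u \leq 2\ell-2$, it is classical that $T(u)$ restricts to the indecomposable projective cover $Q_1(2\ell-2-u)$ of $L(2\ell-2-u)$ in the category of $\G_1$-modules (see e.g.\ \cite[Section II.11.10]{Jantzen}); this can also be seen directly from the fact that $T(u)$ has a two-step good filtration with sections $\nabla(2\ell-2-u)$ and $\nabla(u)$, together with a computation of its $\G_1$-head. In either case $T(u)$ is indecomposable as a $\G_1$-module, and the theorem follows.
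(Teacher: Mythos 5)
The paper does not give a proof of this theorem; it is quoted directly from Doty--Henke \cite[Theorem~2.1]{DotyHenke}. Your argument is correct and reconstructs the standard proof: iterate Steinberg's tensor product theorem to reduce to the $\ell$-restricted Clebsch--Gordan decomposition of Lemma~\ref{lem:exA1KrullSchidtrestricted}, then establish indecomposability of each $J(\mathbf{u})$ via Donkin's twisted-tensor-product criterion (Corollary~\ref{cor:twistedtensorproductindecomposableiterated}), which reduces to the classical fact that $T(u)|_{\G_1}$ is the indecomposable projective $\G_1$-module $Q_1(2\ell-2-u)$ for $\ell-1 \leq u \leq 2\ell-2$ and is simple for $0 \leq u \leq \ell-1$. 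This is precisely the route Doty and Henke take, who likewise rely on the same lemma of Donkin for indecomposability, so your proposal matches the intended argument in substance; the only quibble is cosmetic, namely that the Jantzen citation for $T(u)|_{\G_1} \cong Q_1(2\ell-2-u)$ should point to the discussion of tilting modules and $G_1T$-injectives (Sections~II.11 and~II.E in \cite{Jantzen}) rather than a single numbered subsection, or simply to Donkin's original work.
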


The modular analogue of Lemma \ref{lem:exA1regularpartquantum} is as follows:

\begin{Lemma} \label{lem:exA1regularpartmodular}
	Suppose that we are in the modular case.
	For $x,y\in W_\mathrm{ext}^+$, let $a,b\in\Z_{\geq0}$ and $\omega,\omega^\prime\in\Omega$ such that $x\Cdot0=t_a \omega\Cdot0$ and $y\Cdot0=t_b \omega^\prime\Cdot0$.
	Furthermore, write $a=\sum_i a_i \ell^i$ and $b=\sum_i b_i \ell^i$ with $0 \leq a_i < \ell$ and $0 \leq b_i < \ell$ for all $i$. Then
	\[ \big( L(x\Cdot0) \otimes L(y\Cdot0) \big)_\mathrm{reg} \cong \bigoplus_{\mathbf{u}} T^{\omega\omega^\prime} J(\mathbf{u})^{[1]} , \]
	where the direct sum runs over all sequences $\mathbf{u}=(u_0,u_1,\ldots)$ with $u_i \in \mathrm{CG}_\ell(a_i,b_i)$ for all $i$. Furthermore, with $\mathbf{a}=(a_0+b_0,a_1+b_1,\ldots)$, we have and $G(x,y) \cong T^{\omega\omega^\prime} J(\mathbf{a})^{[1]}$.
\end{Lemma}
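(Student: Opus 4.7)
The proof follows the same template as that of Lemma \ref{lem:exA1regularpartquantum}, but with the classical Clebsch-Gordan formula replaced by Theorem \ref{thm:exA1KrullSchmidt} and Theorem \ref{thm:lusztigtensorproductgenericdirectsummand} replaced by Theorem \ref{thm:steinbergtensorproductgenericdirectsummand}. First, Lemma \ref{lem:translationtensorfundamentalgroup} and Lemma \ref{lem:genericdirectsummandsimplemodulefundamentalgroup} reduce everything to the case $\omega = \omega^\prime = e$, i.e.\ to showing that
\[
\big( L(t_a\Cdot0) \otimes L(t_b\Cdot0) \big)_\mathrm{reg} \cong \bigoplus_{\mathbf{u}} J(\mathbf{u})^{[1]} \qquad \text{and} \qquad G(t_a,t_b) \cong J(\mathbf{a})^{[1]} .
\]

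By Steinberg's tensor product theorem, $L(t_a\Cdot0) = L(\ell a) \cong L(a)^{[1]}$ and $L(t_b\Cdot0) \cong L(b)^{[1]}$, so
\[
L(t_a\Cdot0) \otimes L(t_b\Cdot0) \cong \big( L(a) \otimes L(b) \big)^{[1]} \cong \bigoplus_{\mathbf{u}} J(\mathbf{u})^{[1]}
\]
by Theorem \ref{thm:exA1KrullSchmidt}. I will then argue that each summand $J(\mathbf{u})^{[1]}$ is regular and indecomposable. Indecomposability follows from Corollary \ref{cor:twistedtensorproductindecomposable}, applied with $V = L(0)$ and $M = J(\mathbf{u})$, since $J(\mathbf{u})$ is indecomposable by Theorem \ref{thm:exA1KrullSchmidt}. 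Being indecomposable, $J(\mathbf{u})$ lies in a single linkage class $\Rep_\nu(\G)$, and Proposition \ref{prop:Frobeniustwistregular} then shows that $J(\mathbf{u})^{[1]}$ is strongly regular; in particular its minimal tilting complex has a non-negligible term, so the indecomposable $\G$-module $J(\mathbf{u})^{[1]}$ is not singular. Hence the tensor product $L(t_a\Cdot0) \otimes L(t_b\Cdot0)$ coincides with its own regular part and admits the stated Krull-Schmidt decomposition.

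It remains to pick out $G(t_a,t_b)$. By Lemma \ref{lem:lengthdominant} we have $\ell(t_a) + \ell(t_b) = a + b$, and by Proposition \ref{prop:Frobeniustwistregular} combined with \eqref{eq:gfdwfd}, the good filtration dimension of $J(\mathbf{u})^{[1]}$ equals $d_{J(\mathbf{u})}$, which in type $\mathrm{A}_1$ coincides with the highest weight $\sum_i u_i\ell^i$ of $J(\mathbf{u})$. By Theorem \ref{thm:genericdirectsummandsimplemodule}, $G(t_a,t_b)$ is therefore the unique summand $J(\mathbf{u})^{[1]}$ for which
\[
\sum_i u_i \ell^i = a + b = \sum_i(a_i+b_i)\ell^i .
\]
The sequence $\mathbf{a}$ satisfies this equation, and a short check confirms that $a_i+b_i$ always lies in $\mathrm{CG}_\ell(a_i,b_i)$ (using $a_i+b_i \leq 2\ell-2$ and the explicit form of the excluded set). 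The main delicate point I anticipate is uniqueness: since every element of $\mathrm{CG}(a_i,b_i)$ has the same parity as $a_i+b_i$, any competing $\mathbf{u}$ would require a carry-over shift of $\pm \ell$ between adjacent digits, which conflicts with the parity constraint at the next digit whenever $\ell$ is odd (the remaining case $\ell = 2$ is easily disposed of by hand). This yields $G(t_a,t_b) \cong J(\mathbf{a})^{[1]}$, and the proof concludes by applying $T^{\omega\omega^\prime}$.
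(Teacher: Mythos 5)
Your proof is correct, but the route you take for identifying $G(t_a,t_b)$ differs from the paper's. For the decomposition of the regular part you proceed exactly as the paper does: reduce to $\omega=\omega'=e$, apply Steinberg's tensor product theorem, invoke Theorem \ref{thm:exA1KrullSchmidt}, and then use Proposition \ref{prop:Frobeniustwistregular} to see that each $J(\mathbf{u})^{[1]}$ is regular. For picking out the generic summand, however, the paper reaches directly for Theorem \ref{thm:steinbergtensorproductgenericdirectsummand}: since $G(e,e)\cong L(0)$ is trivially $\G_1$-indecomposable, that theorem gives $G(t_a,t_b)\cong M(a,b)^{[1]}$ at once, and one observes that $J(\mathbf{a})\cong M(a,b)$ is the unique summand of $L(a)\otimes L(b)$ with a non-zero $(a+b)$-weight space. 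You instead compute $\gfd\big(J(\mathbf{u})^{[1]}\big)=\sum_i u_i\ell^i$ via Proposition \ref{prop:Frobeniustwistregular} and the fact that $2(\gamma,\rho^\vee)=\gamma$ in type $\mathrm{A}_1$, and appeal to Theorem \ref{thm:genericdirectsummandsimplemodule}. Both routes work; the paper's is shorter because it bypasses any explicit good-filtration-dimension calculation.

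One thing to streamline: the ``delicate point'' about uniqueness of the sequence $\mathbf{u}$ with $\sum u_i\ell^i=a+b$ is not actually needed. Once you have checked that $\mathbf{a}$ is a valid index (i.e.\ $a_i+b_i\in\mathrm{CG}_\ell(a_i,b_i)$, which you verify correctly) and that $\gfd\big(J(\mathbf{a})^{[1]}\big)=a+b=\ell(t_a)+\ell(t_b)$, Theorem \ref{thm:genericdirectsummandsimplemodule} \emph{already} tells you there is only one regular indecomposable direct summand with this good filtration dimension, so $J(\mathbf{a})^{[1]}$ must be it. The parity-plus-carry argument you sketch is therefore redundant; moreover, as stated it conflates where the parity obstruction actually bites (it already kills the $\pm\ell$ shift at the digit where it is introduced, not at the ``next digit''). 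Since it is not needed, I would drop it rather than try to repair the phrasing.
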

\begin{proof}
	As in the proof of Lemma \ref{lem:exA1regularpartquantum}, we have
	\[ \big( L(x\Cdot0) \otimes L(y\Cdot0) \big)_\mathrm{reg} \cong \big( T^\omega L(t_a\Cdot0) \otimes T^{\omega^\prime} L(t_b\Cdot0) \big)_\mathrm{reg} \cong T^{\omega\omega^\prime} \big( L(t_a\Cdot0) \otimes L(t_b\Cdot0) \big)_\mathrm{reg} , \]
	and it suffices to prove that
	\[ \big( L(t_a\Cdot0) \otimes L(t_b\Cdot0) \big)_\mathrm{reg} \cong \bigoplus_{\mathbf{u}} J(\mathbf{u})^{[1]} . \]
	Observe that $L(t_a\Cdot0) = L(\ell a) \cong L(a)^{[1]}$ and $L(t_b\Cdot0) = L(\ell b) \cong L(b)^{[1]}$, and therefore
	\[ L(t_a\Cdot0) \otimes L(t_b\Cdot0) \cong \big( L(a) \otimes L(b) \big)^{[1]} \cong \bigoplus_{\mathbf{u}} J(\mathbf{u})^{[1]} \]
	by Theorem \ref{thm:exA1KrullSchmidt}.
	Now the first claim follows since $J(\mathbf{u})^{[1]}$ is regular for all $\mathbf{u}$ by Proposition \ref{prop:Frobeniustwistregular}.
	
	Furthermore, it is straightforward to see that $J(\mathbf{a}) \cong M(a,b)$ is the unique indecomposable direct summand of $L(a) \otimes L(b)$ with a non-zero $a+b$-weight space.
	As $G(e,e) \cong L(0)$ is indecomposable as a $\G_1$-module, we have $G(t_a,t_b) \cong M(a,b)^{[1]}$ by Theorem \ref{thm:steinbergtensorproductgenericdirectsummand}, and
	\[ G(x,y) = G(t_a\omega,t_b\omega^\prime) \cong T^{\omega\omega^\prime} G(t_a,t_b) \cong T^{\omega\omega^\prime} M(a,b)^{[1]} \cong T^{\omega\omega^\prime} J(\mathbf{a})^{[1]} \]
	by Lemma \ref{lem:genericdirectsummandsimplemodulefundamentalgroup}, as claimed.
\end{proof}

\subsection{Dual Weyl modules}

Now we turn our attention to generic direct summands of tensor products of dual Weyl modules (see Remark \ref{rem:genericdirectsummandinducedmodule}).
Our approach is based on results from \cite{Cavallin}, where the indecomposable direct summands of tensor products of dual Weyl modules were described as injective modules over certain Schur algebras.
We consider the modular case first.

\subsubsection*{The modular case}

We start by recalling some facts about polynomial representations and truncated categories.
For $d \geq 0$, consider the category $\mathrm{Pol}(2,d)$ of polynomial $\GL_2(\kk)$-modules of degree $d$, as in Section II.A.3 in \cite{Jantzen}.
The simple objects in $\mathrm{Pol}(2,d)$ are indexed by the set $\pi(2,d)$ of $2$-part partitions of $d$, and the restriction to $\G = \SL_2(\kk)$ of a simple $\GL_2(\kk)$-module corresponding to a partition $(a,b) \in \pi(2,d)$ is isomorphic to $L(a-b)$.
Let us write $\pi_d = \{ d , d-2 , d-4 , \ldots \} \cap \Z_{\geq 0}$, and note that $\pi(2,d)$ is in bijection with $\pi_d$ via $(a,b) \mapsto a-b$.

Now also consider the truncated category $\Rep(\G)_{\leq d}$, i.e.\ the Serre subcategory of $\Rep(\G)$ generated by the simple $\G$-modules $L(a)$ with $a \in \pi_d$.
The restriction functor from $\mathrm{Pol}(2,d)$ to $\Rep(G)_{\leq d}$ is an equivalence of categories because the center of $\GL_2(\kk)$ acts on all objects of $\mathrm{Pol}(2,d)$ via the same central character.
(Alternatively, it suffices to note that the restriction functor is an exact functor between highest weight categories which induces a bijection between the standard objects and the costandard objects in the two categories, as explained in \cite[Section 1.2]{ArkhipovBezrukavnikovGinzburg}.)
By Section II.A.6 in \cite{Jantzen}, every simple $\G$-module $L(a)$ with $a \in \pi_d$ has an injective hull $I_d(a)$ in $\Rep(\G)_{\leq d}$, and $I_d(a)$ admits a \emph{good filtration} (see Section II.4.16 in \cite{Jantzen}).
The multiplicity $[ I_d(a) : \nabla(b)]_\nabla$ of $\nabla(b)$ in a good filtration of $I_d(a)$ coincides with the composition mutliplicity $[\nabla(b):L(a)]$ for $b \in \pi_d$.

Let us write $E = \kk^2$ for the vector representation of $\GL_2(\kk)$.
Then the symmetric power $S^d E$ is a polynomial representation of degree $d$ for all $d \geq 0$, and its restriction to $\G = \SL_2(\kk)$ is the dual Weyl module $\nabla(d)$.
More generally, the tensor product $S^a E \otimes S^b E$ is an injective object of $\mathrm{Pol}(2,a+b)$ for all $a,b \geq 0$ by Section 2.1(8) in \cite{DonkinqSchuralgebra}, and it follows that $\nabla(a) \otimes \nabla(b)$ is injective in $\Rep(\G)_{\leq d}$.
In particular $\nabla(a) \otimes \nabla(b)$ decomposes as a direct sum of $\G$-modules of the form $I_d(c)$ for certain $c \in \pi_{a+b}$.

The following preliminary lemma will be useful for our description of generic direct summands of tensor products of dual Weyl modules.

\begin{Lemma} \label{lem:exA1dualWeylhighestweightsummand}
	Let $a,b \geq 0$ and write $a = \sum_i a_i \ell^i$ and $b = \sum_i b_i \ell^i$, with $0 \leq a_i < \ell$ and $0 \leq b_i < b$ for all $i \geq 0$.
	Further set $c_i = \min \{ a_i+b_i , 2\ell-2-a_i-b_i \}$ for all $i \geq 0$, and define
	\[ c = c(a,b) \coloneqq \sum_i c_i \ell^i . \]
	Then $I_{a+b}(c)$ is the unique indecomposable direct summand of the tensor product $\nabla(a) \otimes \nabla(b)$ with a non-zero $(a+b)$-weight space.
\end{Lemma}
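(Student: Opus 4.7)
The plan is to use that $\nabla(a) \otimes \nabla(b) = S^aE \otimes S^bE$ is injective in the truncated category $\Rep(\G)_{\leq a+b}$, so it decomposes as a direct sum of indecomposable injectives $I_{a+b}(c')$ for various $c' \in \pi_{a+b}$. The $(a+b)$-weight space of this tensor product is one-dimensional, spanned by the tensor of the two highest-weight vectors, so exactly one of these summands has a non-zero $(a+b)$-weight space; the task is to show this summand is $I_{a+b}(c)$.

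The first step is to observe that $I_{a+b}(c')$ has a non-zero $(a+b)$-weight space if and only if $L(c')$ is a composition factor of $\nabla(a+b)$. Indeed, $a+b$ is the maximal element of $\pi_{a+b}$, so among the costandards $\nabla(b')$ with $b' \in \pi_{a+b}$ only $\nabla(a+b)$ contributes to this weight space, and the standard reciprocity $[I_{a+b}(c'):\nabla(b')]_\nabla = [\nabla(b'):L(c')]$ translates the question of the $(a+b)$-weight space into a question about composition multiplicities of $\nabla(a+b)$; in particular $[\nabla(a+b):L(c')]$ must equal $1$ for the sought $c'$.

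For the base case $a,b \in X_1$, Lemma \ref{lem:exA1KrullSchidtrestricted} yields $\nabla(a) \otimes \nabla(b) = \bigoplus_{c' \in \mathrm{CG}_\ell(a,b)} T(c')$. A direct inspection of $\mathrm{CG}_\ell(a,b)$ shows that $a+b$ is never removed from $\mathrm{CG}(a,b)$ in this range, so the relevant summand is $T(a+b)$, which is injective in $\Rep(\G)_{\leq a+b}$ (as a direct summand of an injective) and whose socle is $L(c_0)$ with $c_0 = \min\{a+b, 2\ell-2-a-b\}$, by the standard Loewy description of $\mathrm{SL}_2$ tilting modules. Hence $T(a+b) = I_{a+b}(c)$ in this case. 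For arbitrary $a,b$ I would proceed by induction on the number of non-zero $\ell$-adic digits, combining Steinberg's tensor product theorem with the digitwise description of indecomposable summands of $\nabla(a) \otimes \nabla(b)$ due to \cite{Cavallin}, which at each Frobenius layer presents $I_{a+b}(c)$ essentially as $\bigotimes_i I_{a_i+b_i}(c_i)^{[i]}$.

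The main obstacle is the bookkeeping of $\ell$-adic carries in the inductive step: when $a_i + b_i \geq \ell$, one has to verify that the formula $c_i = \min\{a_i+b_i, 2\ell-2-a_i-b_i\}$ still correctly identifies the socle of the highest-weight summand, and that injectivity in the truncated category is preserved across Frobenius layers without interactions between them corrupting the digit-wise socle identification.
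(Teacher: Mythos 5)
Your high-level reduction is sound: $\nabla(a)\otimes\nabla(b)$ is injective in $\Rep(\G)_{\le a+b}$, the $(a+b)$-weight space is one-dimensional, and reciprocity translates the identification of the highest-weight summand into showing that $I_{a+b}(c)$ occurs as a summand and that $[\nabla(a+b):L(c)]\neq 0$. Your base case $a,b\in X_1$ is also handled correctly, including the verification that $a+b$ is never removed from $\mathrm{CG}(a,b)$ and that $\soc\, T(a+b)\cong L(c_0)$.

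The gap is exactly where you flag it: the inductive step is asserted, not proved. The formula $I_{a+b}(c)\cong\bigotimes_i I_{a_i+b_i}(c_i)^{[i]}$ is precisely where the $\ell$-adic carry problem lives — the $\ell$-adic digits of $a+b$ need not be $(a_0+b_0,a_1+b_1,\dots)$ when $a_i+b_i\geq\ell$, so the truncated categories at successive Frobenius layers do not obviously stack compatibly, and invoking Cavallin as a black box leaves the hard part unresolved. The paper avoids the induction entirely by a cleaner route: using Steinberg's tensor product theorem one has $L(a)\otimes L(b)\cong\bigotimes_i\big(L(a_i)\otimes L(b_i)\big)^{[i]}$ (no carries occur, since there is no addition being performed), and since each $L(c_i)$ embeds into $L(a_i)\otimes L(b_i)$ (this is the socle-of-$T(a_i+b_i)$ observation you already have, packaged as \cite[Lemmas 1.1, 1.3]{DotyHenke}) and $c=\sum c_i\ell^i$ is already an $\ell$-adic expansion because $c_i<\ell$, one gets a single embedding
\[ L(c)\cong\bigotimes_i L(c_i)^{[i]} \hookrightarrow \bigotimes_i \big(L(a_i)\otimes L(b_i)\big)^{[i]} \cong L(a)\otimes L(b) \hookrightarrow \nabla(a)\otimes\nabla(b). \]
Injectivity of $\nabla(a)\otimes\nabla(b)$ then forces $I_{a+b}(c)$ to be a direct summand, and the remaining input $[\nabla(a+b):L(c)]=1$ is cited from \cite[Theorem 2.1]{HenkeCartanMatrix}, which is where the carry bookkeeping is actually carried out once and for all. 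If you want to complete your version, you would either need to prove the Cavallin-style tensor factorization of $I_{a+b}(c)$ with carries, or switch to the direct-embedding argument above.
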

\begin{proof}
	Note that $L(c_i)$ embeds into $L(a_i) \otimes L(b_i)$ for all $i \geq 0$ by Lemmas 1.1 and 1.3 in \cite{DotyHenke}.
	Using Steinberg's tensor product theorem, it follows that there is an embedding
	\[ L(c) \cong \bigotimes_i L(c_i)^{[i]} \longrightarrow \bigotimes_i \big(L(a_i) \otimes L(b_i)  \big)^{[i]} \cong L(a) \otimes L(b) \longrightarrow \nabla(a) \otimes \nabla(b) , \]
	and since the tensor product $\nabla(a) \otimes \nabla(b)$ is injective in $\Rep(\G)_{\leq a+b}$, it follows that $I_{a+b}(c)$ is a direct summand of $\nabla(a) \otimes \nabla(b)$.
	Furthermore, we have
	\[ [ I_{a+b}(c) : \nabla(a+b) ]_\nabla = [ \nabla(a+b) : L(c) ] = 1 \]
	by Theorem 2.1 in \cite{HenkeCartanMatrix}, whence $I_{a+b}(c)$ has a non-zero $(a+b)$-weight space.
\end{proof}

Recall from the discussion above Lemma \ref{lem:exA1regularpartquantum} that for $x \in W_\mathrm{ext}^+$, we can choose $a \in \Z_{\geq 0}$ and $\omega \in \Omega$ such that $x\Cdot0 = t_a \omega \Cdot 0$.
The generic direct summands of tensor products of dual Weyl modules can be described as follows:

\begin{Lemma} \label{lem:exA1genericdualWeylmodular}
	For $x,y \in W_\mathrm{ext}^+$, let $a,b\in\Z_{\geq0}$ and $\omega,\omega^\prime\in\Omega$ such that $x\Cdot0=t_a \omega\Cdot0$ and $y\Cdot0=t_b \omega^\prime\Cdot0$.
	Define $c = c(a,b)$ as in Lemma \ref{lem:exA1dualWeylhighestweightsummand}.
	Then $G_\nabla(x,y) = T^{\omega\omega^\prime} I_{\ell a + \ell b}(\ell \cdot c)$.
\end{Lemma}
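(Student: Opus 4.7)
By the $G_\nabla$ analogue of Lemma \ref{lem:genericdirectsummandWeylmodulefundamentalgroup}, one has $G_\nabla(x,y) \cong T^{\omega\omega^\prime} G_\nabla(t_a,t_b)$, so the claim reduces to showing that $G_\nabla(t_a,t_b) \cong I_{\ell a + \ell b}(\ell c)$. As noted in the discussion preceding Lemma \ref{lem:exA1dualWeylhighestweightsummand}, the tensor product $\nabla(\ell a) \otimes \nabla(\ell b)$ is injective in the truncated category $\Rep(\G)_{\leq \ell(a+b)}$, so it decomposes as a direct sum of indecomposable injectives of the form $I_{\ell(a+b)}(c^\prime)$.

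Applying Lemma \ref{lem:exA1dualWeylhighestweightsummand} to the pair $(\ell a, \ell b)$, the unique indecomposable direct summand of $\nabla(\ell a) \otimes \nabla(\ell b)$ with a non-zero $(\ell(a+b))$-weight space is $I_{\ell(a+b)}(c(\ell a, \ell b))$. A direct inspection of base-$\ell$ digits (the digits of $\ell a$ being those of $a$ shifted up by one position, and likewise for $b$) gives $c(\ell a, \ell b) = \ell \cdot c(a,b) = \ell c$. It thus suffices to show that $I_{\ell(a+b)}(\ell c)$ is regular, since then the $\nabla$-analogue of Theorem \ref{thm:genericdirectsummandWeylmodule} (see Remark \ref{rem:genericdirectsummandinducedmodule}) forces it to coincide with $G_\nabla(t_a,t_b)$.

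To verify regularity I plan to compute $C_\mathrm{min}\big(I_{\ell(a+b)}(\ell c)\big)_{-(a+b)}$ and show that it is the non-negligible tilting module $T(\omega_{t_{a+b}} \Cdot 0)$. The non-vanishing of the $(\ell(a+b))$-weight space implies that $\nabla(\ell(a+b))$ appears (with multiplicity one, by the Clebsch--Gordan rule at the level of characters) as the top step of a good filtration of $I_{\ell(a+b)}(\ell c)$, and the $\tau$-dual of Proposition \ref{prop:minimalcomplexWeylmodule} yields $C_\mathrm{min}\big(\nabla(\ell(a+b))\big)_{-(a+b)} \cong T(\omega_{t_{a+b}} \Cdot 0)$. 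I would then iterate the short exact sequences defined by the good filtration and apply Lemma 2.17 of \cite{GruberMinimalTilting} at each step to control the degree $-(a+b)$ term through the filtration.

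The main obstacle will be to rule out cancellation of the $T(\omega_{t_{a+b}} \Cdot 0)$ contribution against the minimal tilting complexes of the other filtration factors. Every other factor $\nabla(\mu)$ has $\mu < \ell(a+b)$ in the same block, so in type $\mathrm{A}_1$ the weight $\mu$ is of the form $k\ell$ or $k\ell - 2$ with $\mu < \ell(a+b)$. A direct length computation in $W_\mathrm{ext}$ (using $\ell(t_k) = k$ and $\ell(t_{k-1} w) = k-1$) shows that any such $\mu$ satisfies $\wfd(\nabla(\mu)) \leq a+b-1$, so $C_\mathrm{min}(\nabla(\mu))$ vanishes in degree $-(a+b)$. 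No cancellation can therefore occur, $C_\mathrm{min}\big(I_{\ell(a+b)}(\ell c)\big)_{-(a+b)} \cong T(\omega_{t_{a+b}} \Cdot 0)$ is non-negligible, $I_{\ell(a+b)}(\ell c)$ is regular, and the identification with $G_\nabla(t_a,t_b)$ follows.
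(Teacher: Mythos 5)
Your argument is sound in outline but follows a genuinely different route from the paper's after the first two (shared) steps: both reduce to $G_\nabla(t_a,t_b)$ via Lemma \ref{lem:genericdirectsummandWeylmodulefundamentalgroup} and identify $I \coloneqq I_{\ell(a+b)}(\ell c)$ as the unique summand of $\nabla(\ell a)\otimes\nabla(\ell b)$ with non-zero top weight space via Lemma \ref{lem:exA1dualWeylhighestweightsummand}. From there the paper shows that every indecomposable summand $M \ncong I$ of $\nabla(\ell a)\otimes\nabla(\ell b)$ lying in the linkage class of $\omega_{t_{a+b}}\Cdot0$ has $\wfd(M) < a+b$ (by bounding the Weyl filtration dimension of each composition factor of $M$ and then of $M$ itself), so that $G_\nabla(t_a,t_b)$, which has $\wfd = a+b$ and lies in that linkage class, can only be $I$. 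You instead establish that $I$ itself is regular by partially computing $C_\mathrm{min}(I)$, and then invoke the uniqueness of the regular indecomposable summand from Remark \ref{rem:genericdirectsummandinducedmodule}. Both routes hinge on the same length computation $\wfd\big(\nabla(t_d\Cdot\nu)\big)=d$ in type $\mathrm{A}_1$, but the paper applies it to the (composition factors of the) competitor summands, while you apply it to the $\nabla$-factors inside a good filtration of $I$; your route is a little more machinery-heavy on the minimal-tilting-complex side but conceptually just as natural.

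One step needs tightening: the sentence ``No cancellation can therefore occur'' does not quite follow from what precedes it. What you have at that point is that every $\nabla$-factor $\nabla(\mu)$ with $\mu<\ell(a+b)$ in a good filtration of $I$ has $C_\mathrm{min}(\nabla(\mu))_{-(a+b)}=0$, which (via the split embeddings of Lemma~2.17 in \cite{GruberMinimalTilting}) yields $C_\mathrm{min}(I)_{-(a+b)} \stackrel{\oplus}{\subseteq} T(\omega_{t_{a+b}}\Cdot0)$; this shows the term is either zero or $T(\omega_{t_{a+b}}\Cdot0)$, and you still need to exclude the zero case. This is easy to supply: writing $0\to M' \to I\to\nabla(\ell(a+b))\to 0$ for the short exact sequence peeling off the top good filtration factor, the bound $\wfd(M')\leq a+b-1 < a+b = \wfd\big(\nabla(\ell(a+b))\big)$ forces $\wfd(I)=a+b$ by the long exact $\mathrm{Ext}$ sequence, so $C_\mathrm{min}(I)_{-(a+b)}\neq 0$. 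With that observation added, your argument closes correctly.
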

\begin{proof}
	By Lemma \ref{lem:genericdirectsummandWeylmodulefundamentalgroup}, we have $G_\nabla(x,y) \cong T^{\omega\omega^\prime} G_\nabla(t_a,t_b)$, so it suffices to prove that
	\[ G_\nabla(t_a,t_b) \cong I_{\ell a + \ell b}(\ell \cdot c) . \]
	Let us write $I = I_{\ell a + \ell b}(\ell \cdot c)$, and note that $I$ is a direct summand of $\nabla(t_a\Cdot0) \otimes \nabla(t_b\Cdot0) = \nabla(\ell a) \otimes \nabla(\ell b)$ by Lemma \ref{lem:exA1dualWeylhighestweightsummand}.
	As $G_\nabla(t_a,t_b)$ has Weyl filtration dimension $\ell(t_a)+\ell(t_b) = a+b$ and belongs to the linkage class of $\omega_{t_a t_b}\Cdot0 = \omega_{t_{a+b}} \Cdot 0$ (see Remark \ref{rem:genericdirectsummandinducedmodule}), the claim follows if we prove that every indecomposable direct summand $M \ncong I$ of $\nabla(\ell a) \otimes \nabla(\ell b)$ that belongs to the linkage class of $\omega_{t_{a+b}} \Cdot 0$ satisfies $\wfd(M)<a+b$.
	Indeed, by weight considerations, every composition factor of $M$ has highest weight in an alcove of the form $t_d \Cdot C_\mathrm{fund}$ for some $d\in\Z_{\geq0}$ with $d<a+b$.
	Thus, all composition factors of $M$ have Weyl filtration dimension at most $a+b-1$ by Remark 4.2 in \cite{GruberLinkageTranslation}, and using Lemma 1.14(5) in \cite{GruberMinimalTilting}, it follows that $\wfd(M) < a+b$, as required.
\end{proof}

\begin{Remark}
	Let $x,y \in W_\mathrm{ext}^+$ and choose $a,b \in \Z_{\geq 0}$ and $\omega,\omega^\prime \in \Omega$ such that $x\Cdot0=t_a \omega\Cdot0$ and $y\Cdot0=t_b \omega^\prime\Cdot0$..
	Further write $a = \sum_i a_i \ell^i$ and $b = \sum_i b_i \ell^i$, with $0 \leq a_i < \ell$ and $0 \leq b_i < b$ for all $i \geq 0$, and suppose that $a_i +b_i \leq \ell-1$ for all $i \geq 0$.
	Then $c = c(a,b) = a+b$, and it follows that
	\[ G_\nabla(x,y) \cong T^{\omega\omega^\prime} I_{\ell a + \ell b}( \ell a + \ell b ) \cong T^{\omega\omega^\prime} \nabla( \ell a + \ell b ) \cong \nabla(t_{a+b} \omega\omega^\prime \Cdot0) , \]
	by Lemma \ref{lem:exA1genericdualWeylmodular} and \cite[Section II.A.6]{Jantzen}.
\end{Remark}

\subsubsection*{The quantum case}

In order to prove a quantum analogue of Lemma \ref{lem:exA1genericdualWeylmodular}, we will use the $\zeta$-Schur algebras $S_\zeta(2,d)$ from \cite{DonkinqSchuralgebra}, where $\zeta \in \C^\times$ is the root of unity in $\G = U_\zeta(\mathfrak{g})$ and $\mathfrak{g} = \mathfrak{sl}_2(\C)$.
We write $\mathrm{Pol}_\zeta(2,d)$ for the category of finite-dimensional left $S_\zeta(2,d)$-modules.
This is a highest weight category with simple objects indexed by the set $\pi(2,d)$ of $2$-part partitions of $d$.
The $\zeta$-Schur algebra $S_\zeta(2,1)$ has a $2$-dimensional vector representation $E$, and for $d \geq 1$, the tensor power $E^{\otimes d}$ and the quantum symmetric power $S^d E$ are representations of $S_\zeta(2,d)$.
(See the introduction to Section 2.1 in \cite{DonkinqSchuralgebra}.)
Furthermore, $S^d E$ is the costandard object of highest weight $(d,0) \in \pi(2,d)$ in $\mathrm{Pol}_\zeta(2,d)$; see \cite[Section 2.1.(15)]{DonkinqSchuralgebra}.

The $\zeta$-Schur $S_\zeta(2,d)$ is a quotient of the quantized enveloping algebra $U_\zeta( \mathfrak{gl}_2(\C) )$ by Theorem 3.4 in \cite{DuQuantizedWeylReciprocity}, and so there is a canonical restriction functor from $\mathrm{Pol}_\zeta(2,d)$ to $\Rep(\G)$.
As in the modular case, it sends the simple $S_\zeta(2,d)$-module of highest weight $(a,b) \in \pi(2,d)$ to the simple $\G$-module $L(a-b)$ and induces an equivalence between $\mathrm{Pol}_\zeta(2,d)$ and the Serre subcategory $\Rep(\G)_{\leq d}$ of $\Rep(\G)$ generated by the simple $\G$-modules $L(c)$ with $c \in \pi_d = \{ d , d-2 , d-4 , \ldots \} \cap \Z_{\geq 0}$.
(As before, this follows from the fact that the restriction functor is an exact functor between highest weight categories which induces bijections between the standard objects and between the costandard objects in $\mathrm{Pol}_\zeta(2,d)$ and $\Rep(\G)_{\leq d}$, cf.\ Section 1.2 in \cite{ArkhipovBezrukavnikovGinzburg}.)

By Section 2.1.(8) in \cite{DonkinqSchuralgebra}, the tensor product of symmetric powers $S^a E \otimes S^b E$ is injective in $\mathrm{Pol}_\zeta(2,d)$ for all $a,b \geq 0$, and it follows that $\nabla(a) \otimes \nabla(b)$ is injective in $\Rep(\G)_{\leq a+b}$.
This fact allows us to prove a quantum analogue of Lemma \ref{lem:exA1genericdualWeylmodular}.

\begin{Lemma}
	For $x,y \in W_\mathrm{ext}^+$, let $a,b\in\Z_{\geq0}$ and $\omega,\omega^\prime\in\Omega$ such that $x\Cdot0=t_a \omega\Cdot0$ and $y\Cdot0=t_b \omega^\prime\Cdot0$.
	Then $G_\nabla(x,y) = T^{\omega\omega^\prime} \nabla(\ell a + \ell b)$.
\end{Lemma}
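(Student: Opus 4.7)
The strategy mirrors the proof of Lemma \ref{lem:exA1genericdualWeylmodular} in the modular case, with a key simplification: Lusztig's tensor product theorem reduces the relevant Clebsch-Gordan combinatorics to the semisimple category $\Rep(\mathfrak{g})$, and the highest-weight indecomposable direct summand of $\nabla(\ell a) \otimes \nabla(\ell b)$ turns out to be simply $\nabla(\ell a + \ell b)$ rather than a larger injective hull.

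By Lemma \ref{lem:genericdirectsummandWeylmodulefundamentalgroup} we have $G_\nabla(x, y) \cong T^{\omega\omega'} G_\nabla(t_a, t_b)$, so it suffices to show $G_\nabla(t_a, t_b) \cong \nabla(\ell a + \ell b)$. As recalled just before the lemma, the tensor product $\nabla(\ell a) \otimes \nabla(\ell b) \cong S^{\ell a} E \otimes S^{\ell b} E$ is injective in $\mathrm{Pol}_\zeta(2, \ell a + \ell b) \simeq \Rep(\G)_{\leq \ell a + \ell b}$, and since $\ell a + \ell b$ is maximal in $\pi_{\ell a + \ell b}$, the injective hull satisfies $I_{\ell a + \ell b}(\ell a + \ell b) \cong \nabla(\ell a + \ell b)$.

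To realise this injective hull as a genuine direct summand, I would invoke Lusztig's tensor product theorem to get $L(\ell a) \cong L_\C(a)^{[1]}$ and $L(\ell b) \cong L_\C(b)^{[1]}$, whence the classical Clebsch-Gordan formula (Remark \ref{rem:exA1ClebschGordanclassical}) gives
\[
L(\ell a) \otimes L(\ell b) \cong \big( L_\C(a) \otimes L_\C(b) \big)^{[1]} \cong \bigoplus_{c \in \mathrm{CG}(a,b)} L(\ell c) .
\]
In particular $L(\ell a + \ell b)$ is a direct summand of $L(\ell a) \otimes L(\ell b)$. Since $L(\ell a) = \soc(\nabla(\ell a))$ and $L(\ell b) = \soc(\nabla(\ell b))$, tensoring produces an embedding $L(\ell a) \otimes L(\ell b) \hookrightarrow \nabla(\ell a) \otimes \nabla(\ell b)$, so $L(\ell a + \ell b)$ lies inside the socle of the injective module $\nabla(\ell a) \otimes \nabla(\ell b)$, and hence its injective hull $\nabla(\ell a + \ell b)$ is a direct summand.

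It remains to identify this summand with $G_\nabla(t_a, t_b)$. The module $\nabla(\ell a + \ell b) = \nabla(t_{a+b} \Cdot 0)$ has Weyl filtration dimension $\ell(t_{a+b}) = a+b = \ell(t_a) + \ell(t_b)$ and lies in the linkage class $\Rep_{\omega_{t_a t_b} \Cdot 0}(\G)$; repeating verbatim the weight-theoretic argument from the proof of Lemma \ref{lem:exA1genericdualWeylmodular} shows that every other indecomposable direct summand of $\nabla(\ell a) \otimes \nabla(\ell b)$ in this linkage class has Weyl filtration dimension strictly less than $a+b$, so the uniqueness recorded in Remark \ref{rem:genericdirectsummandinducedmodule} forces $G_\nabla(t_a, t_b) \cong \nabla(\ell a + \ell b)$. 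The most delicate point is producing $\nabla(\ell a + \ell b)$ as a genuine direct summand rather than merely a subquotient of a good filtration; the Frobenius-twist reduction to $\Rep(\mathfrak{g})$ is the clean quantum-specific ingredient that eliminates the correction $c(a,b)$ required in the modular case.
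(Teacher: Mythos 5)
Your proof is correct and follows the same core structure as the paper's: reduce via Lemma \ref{lem:genericdirectsummandWeylmodulefundamentalgroup}, embed $L(\ell a + \ell b)$ into $\nabla(\ell a)\otimes\nabla(\ell b)$ using the Lusztig Frobenius twist, invoke injectivity of $\nabla(\ell a)\otimes\nabla(\ell b)$ in the truncated category, and identify the resulting injective summand $I_{\ell a+\ell b}(\ell a+\ell b)$ with $\nabla(\ell a+\ell b)$ because $\ell a+\ell b$ is maximal in $\pi_{\ell a+\ell b}$. Two small differences from the paper are worth noting. First, your detour through the full Clebsch--Gordan decomposition of $L_\C(a)\otimes L_\C(b)$ is more than you need: the paper simply uses the highest-weight embedding $L_\C(a+b)\hookrightarrow L_\C(a)\otimes L_\C(b)$ and then twists, without decomposing the whole tensor product. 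Second, and more substantively, in the final identification you re-run the Weyl-filtration-dimension and linkage argument from the modular Lemma \ref{lem:exA1genericdualWeylmodular}, showing all other summands in the relevant linkage class have smaller $\wfd$. The paper's proof short-circuits this: since $\ell a+\ell b$ is an $\ell$-regular weight, $\nabla(\ell a+\ell b)$ is regular (Proposition \ref{prop:minimalcomplexWeylmodule} plus duality), and $G_\nabla(t_a,t_b)$ is by Remark \ref{rem:genericdirectsummandinducedmodule} the \emph{unique} regular indecomposable direct summand, so the two must coincide with no weight combinatorics needed. Your route is valid but longer; the quantum case is precisely where the correction $c(a,b)$ disappears and the $\wfd$ machinery becomes unnecessary.
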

\begin{proof}
	As in the modular case (see Lemma \ref{lem:exA1genericdualWeylmodular}), it suffices to prove that $G_\nabla(t_a,t_b) \cong \nabla( \ell a + \ell b )$ by Lemma \ref{lem:genericdirectsummandWeylmodulefundamentalgroup}.
	Observe that there is an embedding
	\[ L( \ell a + \ell b ) \cong L_\C(a+b)^{[1]} \longrightarrow L_\C(a)^{[1]} \otimes L_\C(b)^{[1]} \cong L(\ell a) \otimes L(\ell b) \longrightarrow \nabla(\ell a) \otimes \nabla(\ell b) . \]
	As $\nabla(\ell a) \otimes \nabla(\ell b)$ is injective in the truncated category $\Rep(\G)_{\leq \ell a + \ell b}$, this implies that the injective hull $I_{\ell a+ \ell b}(\ell a + \ell b)$ of $L(\ell a + \ell b)$ in $\Rep(\G)_{\leq \ell a + \ell b}$ is a direct summand of $\nabla(\ell a) \otimes \nabla(\ell b)$.
	Now $\ell a + \ell b$ is maximal in $\pi_{\ell a+ \ell b}$, so $I_{\ell a+ \ell b}(\ell a + \ell b) \cong \nabla(\ell a + \ell b)$ by Section II.A.6 in \cite{Jantzen}, and as $\nabla(\ell a + \ell b)$ is regular by Proposition \ref{prop:minimalcomplexWeylmodule} (and taking duals), it follows that $\nabla(\ell a + \ell b) \cong G_\nabla(t_a,t_b)$, as required.
\end{proof}

\subsection{Type \texorpdfstring{$\mathrm{A}_2$}{A2}} \label{sec:A2}

Suppose that $\G$ is of type $\mathrm{A}_2$ and $\ell \geq 3$.
Then the simple roots and positive roots are given by $\Pi = \{ \alpha_1 , \alpha_2 \}$ and $\Phi^+ = \{ \alpha_1,\alpha_2,\alpha_\mathrm{h} \}$, with $\alpha_\mathrm{h}=\alpha_1+\alpha_2$, and the weight lattice $X \cong \Z^2$ is spanned by the fundamental dominant weights $\varpi_1 = \varpi_{\alpha_1}$ and $\varpi_2 = \varpi_{\alpha_2}$.
% with $(\varpi_1,\alpha_1^\vee) = (\varpi_2,\alpha_2^\vee) = 1$ and $(\varpi_1,\alpha_2^\vee) = (\varpi_2,\alpha_1^\vee) = 0$.
The affine Weyl group $W_\mathrm{aff}$ is generated by the simple reflections $S = \{ s , t , u \}$, where $s=s_{\alpha_1}$, $t=s_{\alpha_2}$ and $u=s_{\alpha_\mathrm{h},1}$.
Recall that $W_\mathrm{aff}$ is in bijection with the set of alcoves in $X_\R$ via $x \mapsto x \Cdot C_\mathrm{fund}$.
In Figure \ref{fig:alcovesA2}, we display some alcoves for $\G$, and we label some of them by the corresponding elements of $W_\mathrm{aff}$.
The only $\ell$-alcoves containing $\ell$-restricted weights are $C_\mathrm{fund}$ and $u\Cdot C_\mathrm{fund}$.

Our discussion of regular indecomposable direct summands (and generic direct summands) of tensor products of $\G$-modules strongly relies on the two articles \cite{BowmanDotyMartin} (by C.~Bowman, S.~Doty and S.~Martin) and \cite{ChenWang} (by X.~Chan and J.~Wang), whose contents we will briefly discuss here.
The main result of \cite{BowmanDotyMartin} is a description of the set of indecomposable $\G$-modules that arise as direct summands of tensor products of simple $\G$-modules with $\ell$-restricted highest weights.
Strictly speaking, the article only covers the modular case, but none of its methods are specific to that case, and the results that we will use hold in the quantum case as well.
The analogous problem (of finding the indecomposable direct summands of tensor products) for dual Weyl modules with $\ell$-restricted highest weights was considered in \cite{ChenWang}.
Again, the authors discuss only the modular case, but the results are valid in the quantum case as well (as is pointed out at the end of the introduction of that article).
We can combine the results of \cite{BowmanDotyMartin} with the techniques developed in Section \ref{sec:SteinbergLusztigTPtheorem} to give a complete description of the generic direct summands of tensor products of simple $\G$-modules with arbitrary $\ell$-regular (not necessarily $\ell$-restricted) highest weights.
For dual Weyl modules, there is (at present) no method for reducing the study of generic direct summands to the case of $\ell$-restricted highest weights, so we do not go any further than to point out which of the indecomposable $\G$-modules from \cite{ChenWang} are the generic direct summands.

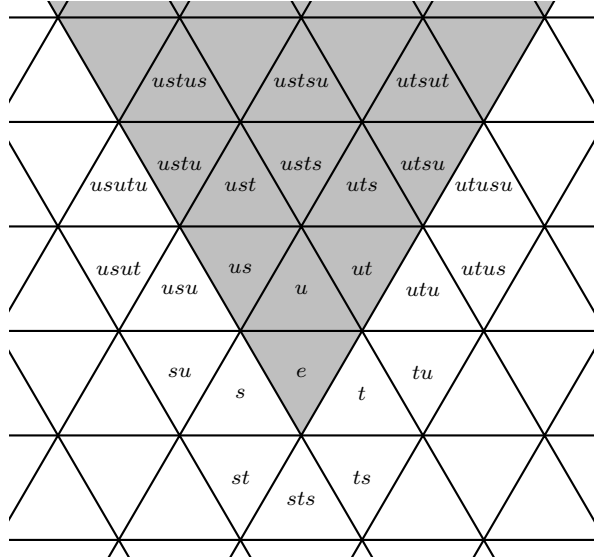
\begin{figure}
		\centering
		\begin{tikzpicture}[scale=1.6]
		\clip (-2.4,-1) rectangle (2.4,3.6);
		
		\pgftransformcm{cos(60)}{sin(60)}{cos(120)}{sin(120)}{\pgfpoint{0cm}{0cm}}
		
		\fill[lightgray] (0,0) rectangle (10,10);
		
		\draw[thick] (-5,-5) grid (10,10);
		\foreach \x in {-5,-4,...,9}{
			\foreach \y in {-4,-3,...,10}{
				\draw[thick]  (\x,\y) -- (\x+1,\y-1);
			}
		}
		\node at (.3,.3) {\scriptsize $e$};
		\node at (.7,.7) {\scriptsize $u$};
		\node at (-.3,.7) {\scriptsize $s$};
		\node at (.7,-.3) {\scriptsize $t$};
		\node at (.3,1.3) {\scriptsize $us$};
		\node at (1.3,.3) {\scriptsize $ut$};
		\node at (.7,1.7) {\scriptsize $ust$};
		\node at (1.7,.7) {\scriptsize $uts$};
		\node at (1.3,1.3) {\scriptsize $usts$};
		\node at (0.3,2.3) {\scriptsize $ustu$};
		\node at (2.3,0.3) {\scriptsize $utsu$};
		\node at (0.7,2.7) {\scriptsize $ustus$};
		\node at (2.7,0.7) {\scriptsize $utsut$};
		\node at (1.7,1.7) {\scriptsize $ustsu$};
		%\node at (1.3,2.3) {\scriptsize $ustusu$};
		%\node at (2.3,1.3) {\scriptsize $utsutu$};
		\node at (-.7,1.3) {\scriptsize $su$};
		\node at (-.3,1.7) {\scriptsize $usu$};
		\node at (-.7,2.3) {\scriptsize $usut$};
		\node at (-.3,2.7) {\scriptsize $usutu$};
		\node at (1.3,-.7) {\scriptsize $tu$};
		\node at (1.7,-.3) {\scriptsize $utu$};
		\node at (-.7,.3) {\scriptsize $st$};
		\node at (.3,-.7) {\scriptsize $ts$};
		\node at (-.3,-.3) {\scriptsize $sts$};
		\node at (2.3,-.7) {\scriptsize $utus$};
		\node at (2.7,-.3) {\scriptsize $utusu$};
		%\node[draw,circle,fill,scale=.2] at (0,0){};
		\end{tikzpicture}
		\caption[Alcoves for $\G$ of type $\mathrm{A}_2$]{Alcoves for $\G$ of type $\mathrm{A}_2$. For some elements $x\in W_\mathrm{aff}$, we have labeled the alcove $x\Cdot C_\mathrm{fund}$ by $x$. The gray-colored region corresponds to $W_\mathrm{aff}^+$.}
		\label{fig:alcovesA2}
\end{figure}

\subsection*{Simple modules}

Let us start by recalling the results of C.~Bowman, S.~Doty and S.~Martin in more detail.
According to Section 8.5 in \cite{BowmanDotyMartin}, the tilting module $T(ust\Cdot\nu)$, for $\nu \in C_\mathrm{fund} \cap X$, has a unique submodule $M(\nu)$ (denoted there by $M(u\Cdot\nu)$) with $M(\nu) \cong M(\nu)^\tau$ and whose submodule structure is given by the following ``Alperin diagram" (where we replace the simple $\G$-module $L(x\Cdot\nu)$ by the label $x \in W_\mathrm{aff}^+$):
\phantomsection \label{def:Mnu}
\[ M(\nu) = 
	\begin{tikzpicture}[scale=.6,baseline={([yshift=-.5ex]current bounding box.center)}]
	\node (C1) at (0,0) {\small $u$};
	\node (C2) at (-1.5,1.5) {\small $us$};
	\node (C3) at (0,1.5) {\small $e$};
	\node (C4) at (1.5,1.5) {\small $ut$};
	\node (C5) at (0,3) {\small $u$};
	
	\draw (C1) -- (C2);
	\draw (C1) -- (C3);
	\draw (C1) -- (C4);
	\draw (C2) -- (C5);
	\draw (C3) -- (C5);
	\draw (C4) -- (C5);
	\end{tikzpicture} \]
Observe that the definition of $M(\nu)$ implies that $T_\nu^\lambda M(\nu) \cong M(\lambda)$ for all $\lambda \in C_\mathrm{fund} \cap X$.
With this notation in place, a simplified version of the main result of \cite{BowmanDotyMartin} can be stated as follows:

\begin{Theorem} \label{thm:BowmanDotyMartin}
	Let $\lambda,\mu \in X_1$ and let $M$ be an indecomposable direct summand of  $L(\lambda) \otimes L(\mu)$. Then either $M$ is a tilting module or $M=L(u\Cdot\nu)$ or $M \cong M(\nu)$, for some $\nu \in C_\mathrm{fund} \cap X$. The third case $M \cong M(\nu)$ can occur only if $\lambda , \mu \in u \Cdot C_\mathrm{fund}$.
\end{Theorem}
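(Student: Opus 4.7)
The plan is to dissect the proof by the alcove positions of $\lambda$ and $\mu$. Since $\lambda,\mu \in X_1$ and $\ell \geq 3 = h$ for type $\mathrm{A}_2$, the $\ell$-restricted region $X_1$ is contained in $\overline{C}_\mathrm{fund} \cup \overline{u\Cdot C}_\mathrm{fund}$, so each of $\lambda$ and $\mu$ belongs to one of these two closed alcoves. This partitions the problem into three essential cases, up to the symmetry interchanging $\lambda$ and $\mu$.

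In the first case, both $\lambda$ and $\mu$ lie in $\overline{C}_\mathrm{fund}\cap X$. Here the linkage principle together with the dominance order forces $L(\lambda) = T(\lambda)$ and $L(\mu) = T(\mu)$, so $L(\lambda) \otimes L(\mu)$ is tilting and every indecomposable summand is a tilting module. In the mixed case $\lambda \in \overline{C}_\mathrm{fund}\cap X$ and $\mu = u\Cdot\nu'$ for some $\nu' \in \overline{C}_\mathrm{fund}\cap X$ (or vice versa), I would realise $L(u\Cdot\nu')$ as the quotient of $\Delta(u\Cdot\nu')$ with kernel $L(\nu') = T(\nu')$. Applying the exact functor $T(\lambda)\otimes-$ to the short exact sequence $0 \to L(\nu') \to \Delta(u\Cdot\nu') \to L(u\Cdot\nu') \to 0$ yields a sequence with tilting left term and middle term having a Weyl filtration; a weight-space inspection then rules out any summand whose socle/head pattern matches the Alperin diagram of $M(\nu)$, leaving only tilting summands and simple modules of the form $L(u\Cdot\nu)$ as possibilities in this regime.

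The main case is $\lambda,\mu \in u\Cdot\overline{C}_\mathrm{fund}\cap X$, and this is where the exotic modules $M(\nu)$ can genuinely arise. Here the statement becomes a direct interpretation of the main result of \cite{BowmanDotyMartin}: that article classifies the Krull--Schmidt summands of $L(u\Cdot\nu_1)\otimes L(u\Cdot\nu_2)$ for $\nu_1,\nu_2 \in \overline{C}_\mathrm{fund}\cap X$, and identifies every non-tilting indecomposable summand either as a simple module $L(u\Cdot\nu)$ or as an indecomposable module realised as in Section~8.5 of \cite{BowmanDotyMartin} as the submodule $M(\nu) \subseteq T(ust\Cdot\nu)$ with the prescribed Alperin diagram; the translation functors $T_\nu^{\nu''}$ for $\nu''\in C_\mathrm{fund}\cap X$ then propagate the description across all regular $\nu$. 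The main obstacle, which is already resolved in the cited paper, is the bookkeeping with tilting characters and Jantzen filtrations in type $\mathrm{A}_2$ needed to exclude any further class of indecomposable summands and to pin down the socle/head configuration of the exotic summand $M(\nu)$ exactly.
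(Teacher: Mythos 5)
The paper does not prove this theorem — it records it as a simplified restatement of the main result of \cite{BowmanDotyMartin}, without independent argument. Your proposal ultimately rests on exactly that citation in the decisive third case (both $\lambda,\mu \in u\Cdot\overline{C}_\mathrm{fund}$), so the two approaches coincide in substance. The preliminary case splitting you add does not make the argument independent of BDM: even in your mixed case, showing that the only non-tilting indecomposable summands are simples of the form $L(u\Cdot\nu)$ (and in particular that nothing else exotic appears) still requires the full BDM classification, not merely an exclusion of $M(\nu)$.

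One further caveat on your mixed-case argument: a pure weight-space inspection is not the right tool to rule out $M(\nu)$. The clean obstruction is the good filtration dimension. From the short exact sequence you write, $0\to T(\lambda)\otimes L(\nu')\to T(\lambda)\otimes\Delta(u\Cdot\nu')\to T(\lambda)\otimes L(u\Cdot\nu')\to 0$, the subobject is tilting (so $\gfd=0$) and the middle term has $\gfd\leq 1$; a long exact sequence argument then gives $\gfd\leq 1$ for every summand of $T(\lambda)\otimes L(u\Cdot\nu')$. On the other hand, $\gfd\big(M(\nu)\big)=2$, because $M(0)\cong G(u,u)$ by Lemma~\ref{lem:regularpartA2} and $\gfd\big(G(u,u)\big)=\ell(u)+\ell(u)=2$ by Theorem~\ref{thm:genericdirectsummandsimplemodule}, with $M(\nu)\cong T_0^\nu M(0)$ and translation equivalences preserving good filtration dimension. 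That closes the gap you left as a \emph{weight-space inspection}.
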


\begin{Remark} \label{rem:indecomposableasG1moduletypeA2}
	It is pointed out at the end of Section 3 (below Theorem B) in \cite{BowmanDotyMartin} that an indecomposable direct summand $M$ of $L(\lambda) \otimes L(\mu)$ as in the preceding theorem has simple socle with $\ell$-restricted highest weight as a $\G_1$-module, unless possibly when
	\[ M \cong T(ustus\Cdot\nu) \qquad \text{or} \qquad M \cong T(utsut\Cdot\nu) \]
	for some $\nu \in \overline{C}_\mathrm{fund} \cap X$.
	%As the $\G_1\T$-socle of a $\G$-module coincides with its $\G_1$-socle (see Remark~1 in Section II.9.6 of \cite{Jantzen}), this implies that 
	In particular, $M$ is indecomposable as a $\G_1$-module (with the same exceptions). 
\end{Remark}

\begin{Lemma} \label{lem:regularpartA2}
	We have $G(u,u) \cong M(0)$ and
	\[ \big( L(u \Cdot 0) \otimes L(u\Cdot 0) \big)_\mathrm{reg} \cong M(0) \oplus L(0) \]
\end{Lemma}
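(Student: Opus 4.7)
The plan is to combine the classification of indecomposable direct summands from Theorem \ref{thm:BowmanDotyMartin} with the characterization of $G(u,u)$ as the unique regular indecomposable direct summand of maximal good filtration dimension from Theorem \ref{thm:genericdirectsummandsimplemodule}, and then to identify the remaining multiplicities via $\Hom$-computations.

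First, applying Theorem \ref{thm:BowmanDotyMartin} to $\lambda = \mu = u\Cdot 0 \in X_1 \cap (u\Cdot C_\mathrm{fund})$, every indecomposable direct summand of $L(u\Cdot 0) \otimes L(u\Cdot 0)$ is a tilting module, or of the form $L(u\Cdot\nu)$, or of the form $M(\nu)$ for some $\nu \in C_\mathrm{fund} \cap X$. By Lemma \ref{lem:linkagetensor}, the regular part lies in $\Rep_0(\G)$, which forces $\nu = 0$ throughout; moreover the only regular tilting module in $\Rep_0(\G)$ is $T(0) = L(0)$, since all other indecomposable tilting summands in the principal block have highest weights outside $C_\mathrm{fund}\cap X$ and are therefore negligible. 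So every regular indecomposable direct summand of $L(u\Cdot 0) \otimes L(u\Cdot 0)$ is isomorphic to one of $L(0)$, $L(u\Cdot 0)$, or $M(0)$.

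Second, I identify $G(u,u)$ using good filtration dimensions. By Theorem \ref{thm:genericdirectsummandsimplemodule}, $G(u,u)$ is the unique regular indecomposable direct summand of good filtration dimension $\ell(u) + \ell(u) = 2$. Since $\gfd(L(0)) = 0$ and $\gfd(L(u\Cdot 0)) = \ell(u) = 1$, and since $G(u,u)$ exists and must be one of the three candidates above, it must be isomorphic to $M(0)$. In particular, $M(0)$ appears as a direct summand of $L(u\Cdot 0) \otimes L(u\Cdot 0)$ and, as a byproduct, has good filtration dimension $2$.

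Third, to show that the regular part is exactly $M(0) \oplus L(0)$, I compute $\Hom$-multiplicities. Since $u\Cdot 0 = (\ell-2)(\varpi_1 + \varpi_2)$ is $-w_0$-invariant in type $\mathrm{A}_2$, $L(u\Cdot 0)$ is self-dual, so $\dim \Hom_\G(L(0), L(u\Cdot 0) \otimes L(u\Cdot 0)) = \dim \End_\G(L(u\Cdot 0)) = 1$. For any indecomposable tilting module $T(\mu)$ with $\mu \neq 0$, the self-duality $T(\mu) \cong T(\mu)^\tau$ gives $\soc T(\mu) \cong L(\mu)$, hence $\Hom_\G(L(0), T(\mu)) = 0$; similarly $\Hom_\G(L(0), L(u\Cdot 0)) = 0$, and $\Hom_\G(L(0), M(0)) = 0$ since $L(0)$ lies in the middle (not in the socle $L(u\Cdot 0)$) of $M(0)$. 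Thus exactly one copy of $L(0) = T(0)$ appears as a direct summand. To rule out further copies of $L(u\Cdot 0)$ as a direct summand, I compute $\dim \Hom_\G(L(u\Cdot 0), L(u\Cdot 0) \otimes L(u\Cdot 0))$ via character/dimension arguments and subtract the contributions from $M(0)$ (which contributes $1$, since $\soc M(0) \cong L(u\Cdot 0)$) and from any singular tilting summands whose socles contain $L(u\Cdot 0)$ (namely $T(u\Cdot 0)$-summands).

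The main obstacle is ruling out additional copies of $L(u\Cdot 0)$, which requires pinning down whether $T(u\Cdot 0)$ occurs as a summand; this can be handled via an explicit $\Hom$-computation or by direct appeal to the Krull-Schmidt decomposition tabulated in Section 8 of \cite{BowmanDotyMartin}, which in this low-rank setting is completely explicit.
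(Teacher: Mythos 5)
Your identification of $G(u,u)\cong M(0)$ is correct and follows essentially the same route as the paper: apply Theorem~\ref{thm:BowmanDotyMartin} to classify the regular indecomposable summands, restrict to $\Rep_0(\G)$ via Lemma~\ref{lem:linkagetensor}, and then use the good filtration dimensions $0$, $1$, $2$ of $L(0)$, $L(u\Cdot0)$, $M(0)$ together with the characterization of $G(u,u)$ from Theorem~\ref{thm:genericdirectsummandsimplemodule}.

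The second half of your argument has a genuine gap. First, the claim that self-duality of $T(\mu)$ forces $\soc T(\mu)\cong L(\mu)$ is false in general: indecomposable tilting modules can have $\Delta(0)=L(0)$ as a bottom $\Delta$-factor even when $\mu\neq 0$ (already in type $\mathrm A_1$ the module $T(2\ell-2)$ has $\nabla(0)$ in its $\nabla$-filtration), so $\Hom_\G(L(0),T(\mu))$ need not vanish. This undermines the one-copy-of-$L(0)$ count. Second, you explicitly defer the $L(u\Cdot0)$-multiplicity to a blanket appeal to the tables in Section~8 of \cite{BowmanDotyMartin}, which is not a proof; in particular those tables are given for specific highest weights, not automatically for $\lambda=\mu=u\Cdot0$.

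What the paper does instead is to take one explicit decomposition from \cite[Section~8.6]{BowmanDotyMartin} of the form $\pr_\nu\big(L(u\Cdot\lambda)\otimes L(u\Cdot\mu)\big)\cong M(\nu)\oplus L(\nu)$ (for suitable $\lambda,\mu,\nu\in C_\mathrm{fund}\cap X$, not necessarily $0$), observe that $M(\nu)$ and $L(\nu)$ are regular, and then invoke the translation principle for tensor products (Theorem~\ref{thm:translationtensor}) to read off simultaneously that $c_{\lambda,\mu}^\nu=1$ and that $\big(L(u\Cdot0)\otimes L(u\Cdot0)\big)_\mathrm{reg}\cong M(0)\oplus L(0)$. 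This sidesteps all socle and $\Hom$-multiplicity issues, and it is robust to the fact that the explicit decomposition tabulated in \cite{BowmanDotyMartin} is stated at a shifted highest weight. You would do well to replace your $\Hom$-count by this translation-functor argument.
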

\begin{proof}
	Recall that $G(u,u)$ belongs to the linkage class of $0$ and that $\gfd\big( G(u,u) \big) = \ell(u) + \ell(u) = 2$; see Theorem \ref{thm:genericdirectsummandsimplemodule}.
	By Theorem \ref{thm:BowmanDotyMartin}, all indecomposable direct summands of $L(u \Cdot 0) \otimes L(u\Cdot 0)$ that are not of the form $M(\nu)$, for some $\nu \in C_\mathrm{fund} \cap X$, have good filtration dimension either zero (because they are tilting modules) or one (because $\gfd( L(u\Cdot\nu) ) = \ell(u) = 1$), and it follows that
	\[ G(u,u) \cong M(0) . \]
	% and $\big[ L(u \Cdot 0) \otimes L(u\Cdot 0) : M(0) \big]_\oplus = 1$.
	In particular, $M(\nu) \cong T_0^\nu M(0)$ is regular for all $\nu \in C_\mathrm{fund} \cap X$.
	
	By Section 8.6 in \cite{BowmanDotyMartin}, there exist $\lambda,\mu,\nu \in C_\mathrm{fund} \cap X$ such that
	\[ \pr_\nu\big( L(u\Cdot\lambda) \otimes L(u\Cdot\mu) \big) \cong M(\nu) \oplus L(\nu) . \]
	Since $M(\nu)$ and $L(\nu)$ are both regular, Theorem \ref{thm:translationtensor} yields
	\[ T_0^\nu \big( M(0) \oplus L(0) \big) \cong M(\nu) \oplus L(\nu) \cong \pr_\nu\big( L(u\Cdot\lambda) \otimes L(u\Cdot\mu) \big)_\mathrm{reg} \cong T_0^\nu \big( L(u\Cdot0) \otimes L(u\Cdot0) \big)_\mathrm{reg}^{\oplus c_{\lambda,\mu}^\nu} , \]
	and we conclude that $c_{\lambda,\mu}^\nu=1$ and $\big( L(u\Cdot0) \otimes L(u\Cdot0) \big)_\mathrm{reg} \cong M(0) \oplus L(0)$.
\end{proof}

Also note that we have
\[ G(e,e) \cong L(0) \cong \big( L(0) \otimes L(0) \big)_\mathrm{reg} \qquad \text{and} \qquad G(u,e) \cong L(u\Cdot0) \cong \big( L(u\Cdot0) \otimes L(0) \big)_\mathrm{reg} . \]
As $C_\mathrm{fund}$ and $u\Cdot C_\mathrm{fund}$ are the only $\ell$-alcoves containing $\ell$-restricted weights, this gives a complete description of the regular parts (and the generic direct summands) of tensor products of simple $\G$-modules with $\ell$-regular $\ell$-restricted highest weights.
Furthermore, all regular indecomposable direct summands of such tensor products are strongly regular (because simple $\G$-modules with $\ell$-regular highest weights and generic direct summands of tensor products of simple $\G$-modules are strongly regular, by \cite[Remark 4.2]{GruberLinkageTranslation} and Remark \ref{rem:genericdirectsummandstronglyregular}) and indecomposable
(with simple socle)
as $\G_1$-modules by Remark \ref{rem:indecomposableasG1moduletypeA2}.
(These are important observations in view of Remarks \ref{rem:regularpartFrobeniusquantum} and \ref{rem:regularpartFrobeniusmodular}.)

Now as in Section \ref{sec:SteinbergLusztigTPtheorem}, let us fix $x,y\in W_\mathrm{ext}^+$ and write
\[ x\Cdot0 = x_0\Cdot0 + \ell\lambda \qquad \text{and} \qquad y\Cdot0 = y_0\Cdot0 + \ell\mu \]
with $\lambda,\mu\in X^+$ and $x_0,y_0\in W_\mathrm{ext}^+$ such that $x_0\Cdot0,y_0\Cdot0\in X_1$.
As $C_\mathrm{fund}$ and $u\Cdot C_\mathrm{fund}$ are the only $\ell$-alcoves containing $\ell$-restricted weights, we have $x_0 , y_0 \in \Omega \cup u\Omega$, and we write $x_0 = u^\varepsilon \omega$ and $y_0 = u^{\varepsilon^\prime} \omega^\prime$ with $\varepsilon,\varepsilon^\prime \in \{0,1\}$ and $\omega,\omega^\prime \in \Omega$.
Note that by Lemma \ref{lem:translationtensorfundamentalgroup}, we have
\[ \big( L(x_0\Cdot0) \otimes L(y_0\Cdot0) \big)_\mathrm{reg} \cong \big( T^\omega L(u^\varepsilon\Cdot0) \otimes T^{\omega^\prime} L(u^{\varepsilon^\prime}\Cdot0) \big)_\mathrm{reg} \cong T^{\omega\omega^\prime} \big( L(u^\varepsilon\Cdot0) \otimes L(u^{\varepsilon^\prime}\Cdot0) \big)_\mathrm{reg} . \]
In the quantum case, we get the following complete desccription of regular parts and generic direct summands of tensor products of simple $\G$-modules from Theorem \ref{thm:lusztigtensorproductgenericdirectsummand} and Remark \ref{rem:regularpartFrobeniusquantum}.

\begin{Theorem} \label{thm:exA2quantum}
	Suppose that we are in the quantum case and write
	\[ L_\C(\lambda) \otimes L_\C(\mu) \cong \bigoplus_{\nu \in X^+} L_\C(\nu)^{\oplus d_{\lambda,\mu}^\nu} . \]
	\begin{enumerate}
	\item If $\varepsilon = \varepsilon^\prime = 0$ then
	\begin{align*}
	\big( L(x\Cdot0) \otimes L(y\Cdot0) \big)_\mathrm{reg} & \cong \bigoplus_{\nu \in X^+} \big( L(\omega\omega^\prime\Cdot0) \otimes L_\C(\nu)^{[1]} \big)^{\oplus d_{\lambda,\mu}^\nu} \\
	& \cong \bigoplus_{\nu \in X^+} L(\omega\omega^\prime\Cdot0+\ell\nu)^{\oplus d_{\lambda,\mu}^\nu}
	\end{align*}
	is a Krull-Schmidt decomposition and $G(x,y) \cong L(\omega\omega^\prime\Cdot0) \otimes L_\C(\lambda+\mu)^{[1]} \cong L(\omega\omega^\prime\Cdot0+\ell\lambda+\ell\mu)$.
	\item If $\varepsilon + \varepsilon^\prime = 1$ then
	\begin{align*}
	\big( L(x\Cdot0) \otimes L(y\Cdot0) \big)_\mathrm{reg} & \cong \bigoplus_{\nu \in X^+} \big( L(u\omega\omega^\prime\Cdot0) \otimes L_\C(\nu)^{[1]} \big)^{\oplus d_{\lambda,\mu}^\nu} \\
	& \cong \bigoplus_{\nu \in X^+} L(u\omega\omega^\prime\Cdot0+\ell\nu)^{\oplus d_{\lambda,\mu}^\nu}
	\end{align*}
	is a Krull-Schmidt decomposition and $G(x,y) \cong L(u\omega\omega^\prime\Cdot0) \otimes L_\C(\lambda+\mu)^{[1]} \cong L(u\omega\omega^\prime\Cdot0+\ell\lambda+\ell\mu)$.
	\item If $\varepsilon = \varepsilon^\prime = 1$ then
	\[ \big( L(x\Cdot0) \otimes L(y\Cdot0) \big)_\mathrm{reg} \cong \bigoplus_{\nu \in X^+} \big( M(\omega\omega^\prime\Cdot0) \otimes L_\C(\nu)^{[1]} \big)^{\oplus d_{\lambda,\mu}^\nu} \oplus \bigoplus_{\nu \in X^+} L\big( \omega\omega^\prime\Cdot0 + \ell\nu \big)^{\oplus d_{\lambda,\mu}^\nu} \]
	is a Krull-Schmidt decomposition and $G(x,y) \cong M(\omega\omega^\prime\Cdot0) \otimes L_\C(\lambda+\mu)^{[1]}$.
	\end{enumerate}
\end{Theorem}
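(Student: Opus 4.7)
The plan is to reduce each case to a computation in the $\ell$-restricted setting where $x_0, y_0 \in \{e,u\} \cdot \Omega$, and then apply Theorem \ref{thm:lusztigtensorproductgenericdirectsummand} together with Remark \ref{rem:regularpartFrobeniusquantum} to lift to arbitrary Frobenius-twist translates. First I would use Lemma \ref{lem:translationtensorfundamentalgroup} to peel off the $T^{\omega\omega'}$ factor, reducing the computation of $\bigl(L(x_0\Cdot 0) \otimes L(y_0\Cdot 0)\bigr)_\mathrm{reg}$ to that of $\bigl(L(u^\varepsilon\Cdot 0) \otimes L(u^{\varepsilon'}\Cdot 0)\bigr)_\mathrm{reg}$. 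In cases (1) and (2), one of the tensor factors is $L(0)$, so this is just $L(0)$ or $L(u\Cdot 0)$, both regular. In case (3), Lemma \ref{lem:regularpartA2} delivers the decomposition $\bigl(L(u\Cdot 0) \otimes L(u\Cdot 0)\bigr)_\mathrm{reg} \cong M(0) \oplus L(0)$. Applying $T^{\omega\omega'}$ then yields $L(\omega\omega'\Cdot 0)$, $L(u\omega\omega'\Cdot 0)$, and $M(\omega\omega'\Cdot 0) \oplus L(\omega\omega'\Cdot 0)$ as the respective regular parts of $L(x_0\Cdot 0) \otimes L(y_0\Cdot 0)$, where I set $M(\omega\omega'\Cdot 0) \coloneqq T^{\omega\omega'} M(0)$.

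Next I would verify the hypothesis of Remark \ref{rem:regularpartFrobeniusquantum}, namely that each regular indecomposable summand $M_i$ has simple socle as a $\G_1$-module. The simple modules $L(\omega\omega'\Cdot 0)$ and $L(u\omega\omega'\Cdot 0)$ have $\ell$-restricted highest weights because $\overline{C}_\mathrm{fund} \cap X \subseteq X_1$, so their restrictions to $\G_1$ are themselves simple and the condition is automatic. For $M(\omega\omega'\Cdot 0)$ the property transfers from $M(0)$ because $T^{\omega\omega'}$ is an auto-equivalence preserving the $\G$-socle (which coincides with the $\G_1$-socle, by the reference cited in the paper), and $M(0)$ has simple socle as a $\G_1$-module since it is not one of the exceptional tilting modules $T(ustus\Cdot\nu)$ or $T(utsut\Cdot\nu)$ identified in Remark \ref{rem:indecomposableasG1moduletypeA2}. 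With this verified, Remark \ref{rem:regularpartFrobeniusquantum} directly yields the stated Krull-Schmidt decompositions of the regular parts; the collapse to simple modules in (1) and (2) is just Lusztig's tensor product theorem $L(\omega\omega'\Cdot 0) \otimes L_\C(\nu)^{[1]} \cong L(\omega\omega'\Cdot 0 + \ell\nu)$ and the analogous $u$-shifted version.

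Finally, to identify $G(x,y)$ in each case I would invoke Theorem \ref{thm:lusztigtensorproductgenericdirectsummand}. The key input is the identification of $G(x_0, y_0)$: by Lemma \ref{lem:genericdirectsummandsimplemodulefundamentalgroup} this equals $T^{\omega\omega'} G(u^\varepsilon, u^{\varepsilon'})$, and the latter is $L(0)$, $L(u\Cdot 0)$, or $M(0)$ in the three cases; the last identification relies on Lemma \ref{lem:regularpartA2}, which distinguishes $M(0)$ from $L(0)$ inside $\bigl(L(u\Cdot 0) \otimes L(u\Cdot 0)\bigr)_\mathrm{reg}$ via the good-filtration-dimension condition $\ell(x_0)+\ell(y_0) = 2$. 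Since $G(x_0, y_0)$ has simple socle as a $\G_1$-module in all three cases, the "if" clause of Theorem \ref{thm:lusztigtensorproductgenericdirectsummand} applies and gives $G(x,y) \cong G(x_0, y_0) \otimes L_\C(\lambda+\mu)^{[1]}$, which after another application of Lusztig's theorem in cases (1) and (2) yields the compact simple-module form in the theorem. The main delicate point is the correct identification $G(u,u) \cong M(0)$ in case (3); this is supplied by Lemma \ref{lem:regularpartA2} and is what produces the non-simple summand $M(\omega\omega'\Cdot 0) \otimes L_\C(\lambda+\mu)^{[1]}$.
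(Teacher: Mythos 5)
Your argument follows exactly the route the paper intends: the paper states this theorem without a separate proof, saying only that it follows from Theorem \ref{thm:lusztigtensorproductgenericdirectsummand} and Remark \ref{rem:regularpartFrobeniusquantum}, together with the reduction via Lemma \ref{lem:translationtensorfundamentalgroup} and the base-case identification in Lemma \ref{lem:regularpartA2} and the simple-socle observation drawn from Remark \ref{rem:indecomposableasG1moduletypeA2}. You have correctly supplied those glossed-over steps, so this is essentially the paper's proof written out in full.
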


Recall that in the modular case, we write $M(\lambda,\mu)$ for the unique indecomposable direct summand of $L(\lambda) \otimes L(\mu)$ with a non-zero $(\lambda+\mu)$-weight space.
The modular analogue of the preceding theorem follows from Theorem \ref{thm:steinbergtensorproductgenericdirectsummand} and Remark \ref{rem:regularpartFrobeniusmodular}.
% and Remark \ref{rem:indecomposableasG1moduletypeA2}.

\begin{Theorem} \label{thm:exA2modular}
	Suppose that we are in the modular case and fix a Krull-Schmidt decomposition
	\[ L(\lambda) \otimes L(\mu) \cong M_1 \oplus \cdots \oplus M_r . \]
	\begin{enumerate}
	\item If $\varepsilon = \varepsilon^\prime = 0$ then
	\[ \big( L(x\Cdot0) \otimes L(y\Cdot0) \big)_\mathrm{reg} \cong \bigoplus_{i=1}^r L(\omega\omega^\prime\Cdot0) \otimes M_i^{[1]} \]
	is a Krull-Schmidt decomposition and $G(x,y) \cong L(\omega\omega^\prime\Cdot0) \otimes M(\lambda,\mu)^{[1]}$.
	\item If $\varepsilon + \varepsilon^\prime = 1$ then
	\[ \big( L(x\Cdot0) \otimes L(y\Cdot0) \big)_\mathrm{reg} \cong \bigoplus_{i=1}^r L(u\omega\omega^\prime\Cdot0) \otimes M_i^{[1]} \]
	is a Krull-Schmidt decomposition and $G(x,y) \cong L(u\omega\omega^\prime\Cdot0) \otimes M(\lambda,\mu)^{[1]}$.
	\item If $\varepsilon = \varepsilon^\prime = 1$ then
	\[ \big( L(x\Cdot0) \otimes L(y\Cdot0) \big)_\mathrm{reg} \cong \bigoplus_{i=1}^r M(\omega\omega^\prime\Cdot0) \otimes M_i^{[1]} \oplus \bigoplus_{i=1}^r L(\omega\omega^\prime\Cdot0) \otimes M_i^{[1]} \]
	is a Krull-Schmidt decomposition and $G(x,y) \cong M(\omega\omega^\prime\Cdot0) \otimes M(\lambda,\mu)^{[1]}$.
	\end{enumerate}
\end{Theorem}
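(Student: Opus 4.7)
The plan is to reduce the theorem to the $\ell$-restricted case handled in Lemma \ref{lem:regularpartA2} and then to bootstrap via the Steinberg tensor product theorem for generic direct summands (Theorem \ref{thm:steinbergtensorproductgenericdirectsummand}) and the description of regular parts of twisted tensor products in Remark \ref{rem:regularpartFrobeniusmodular}. Concretely, since $L(x_0\Cdot0) \cong T^{\omega}L(u^\varepsilon\Cdot0)$ and $L(y_0\Cdot0) \cong T^{\omega^\prime}L(u^{\varepsilon^\prime}\Cdot0)$, Lemma \ref{lem:translationtensorfundamentalgroup} immediately gives
\[ \big( L(x_0\Cdot0) \otimes L(y_0\Cdot0) \big)_\mathrm{reg} \cong T^{\omega\omega^\prime}\big( L(u^\varepsilon\Cdot0) \otimes L(u^{\varepsilon^\prime}\Cdot0) \big)_\mathrm{reg} . \]
Evaluating the right-hand side: for $\varepsilon = \varepsilon^\prime = 0$ it is just $L(0)$; for $\varepsilon + \varepsilon^\prime = 1$ it is $L(u\Cdot0)$ (here the tensor product is simple); and for $\varepsilon = \varepsilon^\prime = 1$ Lemma \ref{lem:regularpartA2} gives $M(0) \oplus L(0)$. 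Applying $T^{\omega\omega^\prime}$ then produces respectively $L(\omega\omega^\prime\Cdot0)$, $L(u\omega\omega^\prime\Cdot0)$, and $M(\omega\omega^\prime\Cdot0) \oplus L(\omega\omega^\prime\Cdot0)$ as Krull-Schmidt decompositions of the regular parts.

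Next I would verify the hypotheses of Remark \ref{rem:regularpartFrobeniusmodular} for each of the three cases, namely that every regular indecomposable summand $M$ appearing above is strongly regular and indecomposable as a $\G_1$-module. Strong regularity follows from Remark \ref{rem:genericdirectsummandstronglyregular} for $M(\omega\omega^\prime\Cdot0) \cong G(u\omega,u\omega^\prime)$ and from Proposition \ref{prop:minimalcomplexsimplemodule} (combined with $\ell$-regularity of the relevant highest weights) for the simple summands. Indecomposability as a $\G_1$-module is clear for the simple summands (whose highest weights $\omega\omega^\prime\Cdot0$ and $u\omega\omega^\prime\Cdot0$ are $\ell$-restricted, so they remain simple upon restriction), and for $M(\omega\omega^\prime\Cdot0)$ it follows from Remark \ref{rem:indecomposableasG1moduletypeA2}, since $M(\nu)$ is none of the exceptional tilting modules $T(ustus\Cdot\nu)$, $T(utsut\Cdot\nu)$.

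With these verifications in hand, Remark \ref{rem:regularpartFrobeniusmodular} immediately yields the claimed Krull-Schmidt decompositions of $(L(x\Cdot0) \otimes L(y\Cdot0))_\mathrm{reg}$ in all three cases, by twisting each regular summand with each $M_j^{[1]}$. For the identification of $G(x,y)$, I would apply Theorem \ref{thm:steinbergtensorproductgenericdirectsummand}: the generic direct summand $G(x_0,y_0)$ is respectively $L(\omega\omega^\prime\Cdot0)$, $L(u\omega\omega^\prime\Cdot0)$, or $M(\omega\omega^\prime\Cdot0)$, each indecomposable as a $\G_1$-module by the previous paragraph, so the theorem gives the isomorphism $G(x,y) \cong G(x_0,y_0) \otimes M(\lambda,\mu)^{[1]}$ in each case.

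The main obstacle is not a computation but a bookkeeping check: one must confirm that $M(\omega\omega^\prime\Cdot0)$ really is indecomposable as a $\G_1$-module. This is the single nontrivial structural input, and it is handled by invoking Remark \ref{rem:indecomposableasG1moduletypeA2} (which in turn relies on the detailed socle analysis of \cite{BowmanDotyMartin}). Everything else is a formal unwinding of the Steinberg-Lusztig framework developed in Section \ref{sec:SteinbergLusztigTPtheorem} together with the explicit $\ell$-restricted base cases encoded in Lemma \ref{lem:regularpartA2}.
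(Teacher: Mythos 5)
Your proposal is correct and follows essentially the same route as the paper: the paper itself derives Theorem \ref{thm:exA2modular} from Theorem \ref{thm:steinbergtensorproductgenericdirectsummand} and Remark \ref{rem:regularpartFrobeniusmodular} after establishing the $\ell$-restricted base cases (Lemma \ref{lem:regularpartA2}, the $G(e,e)$ and $G(u,e)$ computations) and the translation reduction via Lemma \ref{lem:translationtensorfundamentalgroup}, and after noting — exactly as you do — that the regular summands in those base cases are strongly regular and $\G_1$-indecomposable by Remark \ref{rem:indecomposableasG1moduletypeA2}. Your write-up merely unpacks the one-line reference the paper gives, with only a cosmetic difference in citing Proposition \ref{prop:minimalcomplexsimplemodule} rather than \cite[Remark 4.2]{GruberLinkageTranslation} for the strong regularity of the simple summands.
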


For the rest of this section, suppose that we are in the modular case.
In order to complete the description of generic direct summands of tensor products of simple $\G$-modules, it remains to determine the $\G$-modules $M(\lambda,\mu)$, for $\lambda,\mu \in X^+$.
We start by determining the $\G$-modules $M(\lambda^\prime,\mu^\prime)$ with $\lambda^\prime,\mu^\prime\in X_1$ explicitly.

\begin{Lemma} \label{lem:highestweightdirectsummandtypeA2}
	Let $\lambda^\prime,\mu^\prime \in X_1$. We have
	\[ M(\lambda^\prime,\mu^\prime) \cong L(\lambda^\prime+\mu^\prime) \]
	whenever $\lambda^\prime + \mu^\prime \in u \Cdot C_\mathrm{fund}$ and either $\lambda^\prime\in u \Cdot C_\mathrm{fund}$ or $\mu^\prime \in u \Cdot C_\mathrm{fund}$, and
	\[ M(\lambda^\prime,\mu^\prime) \cong T(\lambda^\prime+\mu^\prime) \]
	in all other cases.
\end{Lemma}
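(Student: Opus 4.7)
I would split the argument into the two cases of the lemma and analyze each using Theorem \ref{thm:BowmanDotyMartin}, which restricts $M(\lambda',\mu')$ to being one of: $T(\lambda'+\mu')$, a simple module $L(u\Cdot\nu)$ with $u\Cdot\nu = \lambda'+\mu'$ (requiring $\lambda'+\mu' \in u\Cdot C_\mathrm{fund}$), or $M(\nu)$ with the same highest-weight constraint plus both $\lambda',\mu' \in u\Cdot C_\mathrm{fund}$.

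For the case $M(\lambda',\mu') \cong T(\lambda'+\mu')$, if $\lambda'+\mu' \notin u\Cdot C_\mathrm{fund}$, the last two options are excluded by highest-weight considerations, so $M(\lambda',\mu')$ must be tilting with maximal weight $\lambda'+\mu'$. If instead $\lambda'+\mu' \in u\Cdot C_\mathrm{fund}$ but neither of $\lambda',\mu'$ lies in $u\Cdot C_\mathrm{fund}$, a direct alcove computation shows that $\lambda',\mu'$ cannot lie on the singular walls $H_{\alpha_1,1}$ or $H_{\alpha_2,1}$ (a coordinate of value $\ell-1$ in $\lambda'$ would force the corresponding coordinate of $\lambda'+\mu'$ to exceed $\ell-2$, contradicting $\lambda'+\mu' \in u\Cdot C_\mathrm{fund}$). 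Hence $\lambda',\mu' \in \overline{C}_\mathrm{fund}$, so $L(\lambda') = T(\lambda')$ and $L(\mu') = T(\mu')$, $L(\lambda')\otimes L(\mu')$ is tilting, and $M(\lambda',\mu') \cong T(\lambda'+\mu')$.

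For the case $M(\lambda',\mu') \cong L(\lambda'+\mu')$, I assume WLOG that $\lambda' \in u\Cdot C_\mathrm{fund}$. A similar alcove calculation (using $a_0 + b_0 \geq \ell - 1$ together with $a_0+a_1, b_0+b_1 \leq \ell-2$) gives $(\mu',\alpha_\mathrm{h}^\vee) \leq \ell-3$, so $\mu' \in C_\mathrm{fund}$ (open) — in particular the $M(\nu)$ option is precluded — and moreover both $\lambda',\mu' \in u\Cdot C_\mathrm{fund}$ would force $(\lambda'+\mu'+\rho,\alpha_\mathrm{h}^\vee)\geq 2\ell$, contradicting the hypothesis. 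Writing $\lambda' = u\Cdot\alpha$ with $\alpha \in C_\mathrm{fund}\cap X$, so that $L(\lambda') \cong T_0^\alpha L(u\Cdot 0)$ and $L(\mu') \cong T_0^{\mu'} L(0)$, Theorem \ref{thm:translationtensor} gives
\[
(L(\lambda')\otimes L(\mu'))_\mathrm{reg} \cong \bigoplus_{\nu \in C_\mathrm{fund}\cap X} L(u\Cdot\nu)^{\oplus c_{\alpha,\mu'}^\nu}.
\]
A short computation identifies the unique $\nu \in C_\mathrm{fund}\cap X$ with $u\Cdot\nu = \lambda'+\mu'$ as $\nu = \alpha + s_{\alpha_\mathrm{h}}(\mu')$. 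Since $\alpha,\mu',\nu \in C_\mathrm{fund}$, the coefficient $c_{\alpha,\mu'}^\nu$ coincides with the characteristic-zero multiplicity of $L_\C(\nu)$ in $L_\C(\alpha)\otimes L_\C(\mu')$, which by the PRV theorem (applied to $w = s_{\alpha_\mathrm{h}}$, with $\nu$ dominant by hypothesis) is at least $1$. Thus $L(\lambda'+\mu')$ appears as a direct summand of $L(\lambda')\otimes L(\mu')$, and the uniqueness of $M(\lambda',\mu')$ forces $M(\lambda',\mu') \cong L(\lambda'+\mu')$.

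The main obstacle is the positivity $c_{\alpha,\mu'}^\nu > 0$; since $\alpha,\mu' \in C_\mathrm{fund}$ reduces the calculation to the classical setting, PRV applies cleanly, but a direct Pieri-style verification in rank $2$ would also suffice.
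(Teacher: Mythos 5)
Your case split is essentially correct and the final conclusions match the paper, but your argument for the key case $M(\lambda',\mu') \cong L(\lambda'+\mu')$ has a gap that needs repair.

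The problematic step is your claim that ``since $\alpha,\mu',\nu \in C_\mathrm{fund}$, the coefficient $c_{\alpha,\mu'}^\nu$ coincides with the characteristic-zero multiplicity of $L_\C(\nu)$ in $L_\C(\alpha)\otimes L_\C(\mu')$.'' This is false as a general principle: the Verlinde structure constant is a \emph{truncation} of the classical tensor multiplicity, and having $\alpha,\mu',\nu$ in the fundamental alcove does not prevent truncation. For a concrete counterexample, take $\Phi=\mathrm{A}_1$, $\ell=5$, $\alpha=\mu'=3$, $\nu=2$: classically $N_{3,3}^2 = 1$, but $L(3)\otimes L(3) \cong T(0)\oplus T(4)\oplus T(6)$ by Lemma \ref{lem:exA1KrullSchidtrestricted}, so $c_{3,3}^2 = 0$, even though $3,3,2 \in C_\mathrm{fund}\cap X$. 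The drop occurs because $T(u\Cdot\nu)$ absorbs a $\chi(\nu)$ summand. Your conclusion does happen to be correct for the \emph{specific} $\nu = \alpha + s_{\alpha_\mathrm{h}}(\mu')$ that PRV singles out, but only because $u\Cdot\nu = \lambda'+\mu' > \alpha + \mu'$ (so $T(u\Cdot\nu)$ cannot occur as a summand of $T(\alpha)\otimes T(\mu')$), a degree argument that you did not make. Without it, the step from $N_{\alpha,\mu'}^\nu \geq 1$ to $c_{\alpha,\mu'}^\nu \geq 1$ is unjustified.

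The paper sidesteps PRV and fusion multiplicities entirely: since $\mu'$ is the unique dominant weight in the $W_\mathrm{fin}$-orbit of $u\Cdot(\lambda'+\mu') - u\Cdot\lambda' = s_{\alpha_\mathrm{h}}(\mu')$, the translation functor $T_{u\Cdot\lambda'}^{u\Cdot(\lambda'+\mu')}$ is exactly $\pr_{u\Cdot(\lambda'+\mu')}\bigl(L(\mu')\otimes -\bigr)$, and the classical translation principle gives
$L(\lambda'+\mu') \cong T_{u\Cdot\lambda'}^{u\Cdot(\lambda'+\mu')} L(\lambda') = \pr_{u\Cdot(\lambda'+\mu')}\bigl(L(\lambda')\otimes L(\mu')\bigr)$,
which is manifestly a direct summand with a nonzero $(\lambda'+\mu')$-weight space. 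This is both shorter and avoids positivity considerations. A minor secondary imprecision: you attach the ``highest-weight constraint $\lambda'+\mu' \in u\Cdot C_\mathrm{fund}$'' to the option $M(\lambda',\mu')\cong M(\nu)$, but $M(\nu)$ has its maximal composition-factor weights in $us\Cdot C_\mathrm{fund}$ and $ut\Cdot C_\mathrm{fund}$, not $u\Cdot C_\mathrm{fund}$; the cleanest reason this option never arises is simply that $M(\nu)$ has two incomparable maximal weights while $M(\lambda',\mu')$ must have a unique one. Your remaining reductions (ruling out $H_{\alpha_1,1}$, $H_{\alpha_2,1}$ by a coordinate count, concluding $\lambda',\mu'\in\overline{C}_\mathrm{fund}$ when neither is in $u\Cdot C_\mathrm{fund}$ but $\lambda'+\mu'$ is, and the $(\lambda'+\mu'+\rho,\alpha_\mathrm{h}^\vee)\geq 2\ell$ estimate) are correct and agree with the paper's computations.
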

\begin{proof}
	First suppose that neither of $\lambda^\prime$ and $\mu^\prime$ belongs to $u\Cdot C_\mathrm{fund}$. As $\lambda^\prime,\mu^\prime \in X_1 \subseteq \overline{C}_\mathrm{fund} \cup u \Cdot \overline{C}_\mathrm{fund}$, this implies that $\lambda^\prime,\mu^\prime\in C_\mathrm{fund}$ or that at least one of $\lambda^\prime$ and $\mu^\prime$ is $\ell$-singular. If $\lambda^\prime,\mu^\prime\in C_\mathrm{fund}$ then
	\[ L(\lambda^\prime) \otimes L(\mu^\prime) \cong T(\lambda^\prime) \otimes T(\mu^\prime) \]
	is a tilting module and it follows that $M(\lambda^\prime,\mu^\prime) \cong T(\lambda^\prime+\mu^\prime)$.
	If either of the weights $\lambda^\prime$ or $\mu^\prime$ is $\ell$-singular then one of the simple modules $L(\lambda^\prime)$ and $L(\mu^\prime)$ is singular by \cite[Lemma 3.3]{GruberLinkageTranslation}, and $M(\lambda^\prime,\mu^\prime)$ is singular because singular modules form a thick tensor ideal.
	On the other hand, $L(u\Cdot\nu)$ is regular for all $\nu \in C_\mathrm{fund} \cap X$ (again by \cite[Lemma 3.3]{GruberLinkageTranslation}), and Theorem \ref{thm:BowmanDotyMartin} implies that $M(\lambda^\prime,\mu^\prime)$ is a tilting module, so $M(\lambda^\prime,\mu^\prime) \cong T(\lambda^\prime+\mu^\prime)$.
	
	By symmetry, we may now suppose that $\lambda^\prime \in u \Cdot C_\mathrm{fund}$ and that $\mu^\prime$ is $\ell$-regular.
	If $\lambda^\prime+\mu^\prime\in u \Cdot C_\mathrm{fund}$ then $\mu^\prime$ is the unique dominant weight in the $W_\mathrm{fin}$-orbit of $u \Cdot(\lambda^\prime+\mu^\prime) - u \Cdot \lambda^\prime = s_{\alpha_\mathrm{h}}(\mu^\prime)$, and it follows that
	\[ L(\lambda^\prime+\mu^\prime) \cong T_{u\Cdot\lambda^\prime}^{u\Cdot(\lambda^\prime+\mu^\prime)} L(\lambda^\prime) = \pr_{u\Cdot(\lambda^\prime+\mu^\prime)}\big( L(\lambda^\prime) \otimes L(\mu^\prime) \big) , \]
	whence $M(\lambda^\prime,\mu^\prime) \cong L(\lambda^\prime+\mu^\prime)$.
	If $\mu^\prime \in C_\mathrm{fund}$ and $\lambda^\prime+\mu^\prime \notin u \Cdot C_\mathrm{fund}$ then $M(\lambda^\prime,\mu^\prime)$ is singular because the regular part
	\begin{align*}
	\big( L(\lambda^\prime) \otimes L(\mu^\prime) \big)_\mathrm{reg} & \cong \big( T_0^{u\Cdot\lambda^\prime} L(u\Cdot0) \otimes T_0^{\mu^\prime} L(0) \big)_\mathrm{reg} \\
	& \cong \bigoplus_{\nu \in C_\mathrm{fund} \cap X} T_0^\nu \big( L(u\Cdot0) \otimes L(0) \big)_\mathrm{reg}^{\oplus c_{u\Cdot\lambda^\prime,\mu^\prime}^\nu} \\
	& \cong \bigoplus_{\nu \in C_\mathrm{fund} \cap X} \big( T_0^\nu L(u\Cdot0) \big)^{\oplus c_{u\Cdot\lambda^\prime,\mu^\prime}^\nu} \\
	& \cong \bigoplus_{\nu \in C_\mathrm{fund} \cap X} L(u\Cdot\nu)^{\oplus c_{u\Cdot\lambda^\prime,\mu^\prime}^\nu}
	\end{align*}
	is a direct sum of simple $\G$-modules with highest weights in $u \Cdot C_\mathrm{fund}$; see Theorem \ref{thm:translationtensor}.
	As before, Theorem  \ref{thm:BowmanDotyMartin} implies that $M(\lambda^\prime,\mu^\prime)$ is a tilting module and therefore $M(\lambda^\prime,\mu^\prime) \cong T(\lambda^\prime+\mu^\prime)$.
	
	Finally, suppose that $\mu^\prime \in u\Cdot C_\mathrm{fund}$.
	By Lemma \ref{lem:regularpartA2} and Theorem \ref{thm:translationtensor}, we have
	\begin{align*}
	\big( L(\lambda^\prime) \otimes L(\mu^\prime) \big)_\mathrm{reg} & \cong \big( T_0^{u\Cdot\lambda^\prime} L(u\Cdot0) \otimes T_0^{u\Cdot\mu^\prime} L(u\Cdot0) \big)_\mathrm{reg} \\
	& \cong \bigoplus_{\nu \in C_\mathrm{fund} \cap X} T_0^\nu \big( L(u\Cdot0) \otimes L(u\Cdot0) \big)_\mathrm{reg}^{\oplus c_{u\Cdot\lambda^\prime,u\Cdot\mu^\prime}^\nu} \\
	& \cong \bigoplus_{\nu \in C_\mathrm{fund} \cap X} \big( T_0^\nu M(0) \oplus T_0^\nu L(0) \big)^{\oplus c_{u\Cdot\lambda^\prime,u\Cdot\mu^\prime}^\nu} \\
	& \cong \bigoplus_{\nu \in C_\mathrm{fund} \cap X} \big( M(\nu) \oplus L(\nu) \big)^{\oplus c_{u\Cdot\lambda^\prime,u\Cdot\mu^\prime}^\nu} ,
	\end{align*}
	and all singular indecomposable direct summands of $L(\lambda^\prime) \otimes L(\mu^\prime)$ are tilting modules by Theorem \ref{thm:BowmanDotyMartin}.
	On the other hand, we have $(\lambda^\prime+\mu^\prime+\rho,\alpha_\mathrm{h}^\vee)\geq 2\ell$ because $(\lambda^\prime+\rho,\alpha_\mathrm{h}^\vee)\geq\ell+1$ and $(\mu^\prime+\rho,\alpha_\mathrm{h}^\vee)\geq\ell+1$, hence
	\[ \lambda^\prime+\mu^\prime \notin C_\mathrm{fund} \cup u\Cdot C_\mathrm{fund} \cup us\Cdot C_\mathrm{fund} \cup ut\Cdot C_\mathrm{fund} \]
	and $L(\lambda^\prime+\mu^\prime)$ is not a composition factor of $M(\nu)$ or $L(\nu)$, for any $\nu \in C_\mathrm{fund} \cap X$.
	We conclude that $M(\lambda^\prime,\mu^\prime)$ is a tilting module, so $M(\lambda^\prime,\mu^\prime) \cong T(\lambda^\prime+\mu^\prime)$.
\end{proof}

\begin{Corollary} \label{cor:highestweightdirectsummandindecG1typeA2}
	Let $\lambda^\prime,\mu^\prime \in X_1$.
	Then $M(\lambda^\prime,\mu^\prime)$ is indecomposable as a $\G_1$-module.
\end{Corollary}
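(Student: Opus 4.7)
The proof strategy is to apply Lemma \ref{lem:highestweightdirectsummandtypeA2}, which identifies $M(\lambda',\mu')$ as either $L(\lambda'+\mu')$ or $T(\lambda'+\mu')$, and then invoke Remark \ref{rem:indecomposableasG1moduletypeA2} to handle the tilting case.

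In the first case we have $\lambda'+\mu' \in u\Cdot C_\mathrm{fund}$. The walls of this alcove are $H_{\alpha_\mathrm{h},1}$, $H_{\alpha_1,1}$ and $H_{\alpha_2,1}$, so every integral weight $y$ in $u\Cdot C_\mathrm{fund}$ satisfies $0 < (y+\rho,\alpha_i^\vee) < \ell$ for $i=1,2$, and hence belongs to $X_1$. In particular $\lambda'+\mu' \in X_1$, so the restriction of $L(\lambda'+\mu')$ to $\G_1$ is the simple $\G_1$-module of highest weight $\lambda'+\mu'$ (see Section II.3.15 of \cite{Jantzen}), which is certainly indecomposable.

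In the second case, Remark \ref{rem:indecomposableasG1moduletypeA2} ensures that $T(\lambda'+\mu')$ is indecomposable as a $\G_1$-module unless $\lambda'+\mu' \in ustus\Cdot\overline{C}_\mathrm{fund}$ or $\lambda'+\mu' \in utsut\Cdot\overline{C}_\mathrm{fund}$. A direct check (by inspection of Figure \ref{fig:alcovesA2}, or by tracing successive reflections acting on $0$) shows that these two closed alcoves are contained in the half-spaces $\{y \in X_\R : (y+\rho,\alpha_2^\vee) \geq 2\ell\}$ and $\{y \in X_\R : (y+\rho,\alpha_1^\vee) \geq 2\ell\}$, respectively. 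However, for $\lambda',\mu' \in X_1$ one has
\[ (\lambda'+\mu'+\rho,\alpha_i^\vee) = (\lambda',\alpha_i^\vee) + (\mu',\alpha_i^\vee) + 1 \leq 2(\ell-1) + 1 = 2\ell - 1 < 2\ell \]
for $i = 1, 2$, so $\lambda'+\mu'$ lies in neither half-space, and neither exception applies.

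The main technical obstacle is the half-space description of $ustus\Cdot \overline{C}_\mathrm{fund}$ and $utsut\Cdot \overline{C}_\mathrm{fund}$. This is routine via a short computation applying the dot-action formulas $u \Cdot y = y - ((y+\rho,\alpha_\mathrm{h}^\vee) - \ell)\alpha_\mathrm{h}$, $s \Cdot y = s_{\alpha_1}(y+\rho) - \rho$ and $t \Cdot y = s_{\alpha_2}(y+\rho) - \rho$ successively to $0$, but one has to take care with the convention for composing simple reflections in $W_\mathrm{aff}$ so as to match the labeling in Figure \ref{fig:alcovesA2}.
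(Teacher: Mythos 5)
Your proof is correct and follows the same route as the paper's: apply Lemma \ref{lem:highestweightdirectsummandtypeA2} to split into the simple case (where $\lambda'+\mu' \in X_1$ forces $L(\lambda'+\mu')$ to be $\G_1$-simple) and the tilting case (where the sum constraint $(\lambda'+\mu',\alpha_i^\vee) \leq 2\ell-2$ rules out the two exceptional alcoves of Remark \ref{rem:indecomposableasG1moduletypeA2}). The only difference is cosmetic: you make the half-space containments $ustus\Cdot\overline{C}_\mathrm{fund} \subseteq \{(y+\rho,\alpha_2^\vee) \geq 2\ell\}$ and $utsut\Cdot\overline{C}_\mathrm{fund} \subseteq \{(y+\rho,\alpha_1^\vee) \geq 2\ell\}$ explicit, whereas the paper reads the same disjointness off Figure \ref{fig:alcovesA2}, and your containments check out (e.g.\ $ustus\Cdot0 = \varpi_1 + (3\ell-2)\varpi_2$, giving $(ustus\Cdot0+\rho,\alpha_2^\vee) = 3\ell-1 > 2\ell$).
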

\begin{proof}
	By Lemma \ref{lem:highestweightdirectsummandtypeA2}, we have either
	\[ M(\lambda^\prime,\mu^\prime) \cong T(\lambda^\prime+\mu^\prime) \qquad \text{or} \qquad  M(\lambda^\prime,\mu^\prime) \cong L(\lambda^\prime+\mu^\prime) . \]
	In the second case, we further have $\lambda^\prime+\mu^\prime \in u \Cdot C_\mathrm{fund} \cap X \subseteq X_1$ and it follows that $L(\lambda^\prime+\mu^\prime)$ is simple as a $\G_1$-module.
	As $\lambda^\prime,\mu^\prime\in X_1$, we have
	\[ \lambda^\prime+\mu^\prime \in \big\{ \gamma\in X^+ \mathrel{\big|} (\gamma,\alpha^\vee) \leq 2\ell-2 \text{ for all } \alpha\in\Pi \big\} . \]
	This set of weights is disjoint from $ustus\Cdot \overline{C}_\mathrm{fund}$ and $utsut\Cdot \overline{C}_\mathrm{fund}$ (see Figure \ref{fig:alcovesA2}), so Remark~\ref{rem:indecomposableasG1moduletypeA2} implies that $T(\lambda^\prime+\mu^\prime)$ is indecomposable as a $\G_1$-module.
\end{proof}

Let us write $\lambda = \sum_{i\geq0} \ell^i \lambda_i$ and $\mu = \sum_{i\geq0} \ell^i \mu_i$, with $\lambda_i,\mu_i \in X_1$ for all $i\geq0$.

\begin{Corollary}
	We have $M(\lambda,\mu) \cong \bigotimes_{i\geq0} M(\lambda_i,\mu_i)^{[i]}$.
\end{Corollary}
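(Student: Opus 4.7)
The plan is to observe that this statement is an immediate combination of the two corollaries proven just above it, so the proof will be very short.

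First, I would note that for each $i \geq 0$, the weights $\lambda_i$ and $\mu_i$ lie in $X_1$ by hypothesis. Therefore Corollary~\ref{cor:highestweightdirectsummandindecG1typeA2} applies and tells us that $M(\lambda_i,\mu_i)$ is indecomposable as a $\G_1$-module for every $i \geq 0$.

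Next, having verified this hypothesis, I would directly invoke Corollary~\ref{cor:highestweightdirectsummandFrobenius} — which is precisely the general criterion that produces the Steinberg-style factorization $M(\lambda,\mu) \cong \bigotimes_{i\geq 0} M(\lambda_i,\mu_i)^{[i]}$ whenever each $M(\lambda_i,\mu_i)$ is $\G_1$-indecomposable. This yields the claim.

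There is essentially no obstacle here: the real content was already done in the preceding Lemma~\ref{lem:highestweightdirectsummandtypeA2} (identifying $M(\lambda^\prime,\mu^\prime)$ as either $L(\lambda^\prime+\mu^\prime)$ or $T(\lambda^\prime+\mu^\prime)$) and in Corollary~\ref{cor:highestweightdirectsummandindecG1typeA2} (ruling out the exceptional tilting modules of Remark~\ref{rem:indecomposableasG1moduletypeA2} by a weight bound). The present corollary is just the formal consequence of feeding those results into the general Steinberg-type statement of Corollary~\ref{cor:highestweightdirectsummandFrobenius}. Thus the proof consists of two sentences: ``By Corollary~\ref{cor:highestweightdirectsummandindecG1typeA2}, $M(\lambda_i,\mu_i)$ is indecomposable as a $\G_1$-module for all $i \geq 0$. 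The claim then follows from Corollary~\ref{cor:highestweightdirectsummandFrobenius}.''
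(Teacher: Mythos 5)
Your proof is correct and matches the paper's argument exactly: the paper also cites Corollary~\ref{cor:highestweightdirectsummandindecG1typeA2} to establish $\G_1$-indecomposability of each $M(\lambda_i,\mu_i)$ and then applies Corollary~\ref{cor:highestweightdirectsummandFrobenius}.
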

\begin{proof}
	The $\G$-modules $M(\lambda_i,\mu_i)$, for $i\geq0$, are indecomposable as $\G_1$-modules by Corollary \ref{cor:highestweightdirectsummandindecG1typeA2} and the claim follows from Corollary \ref{cor:highestweightdirectsummandFrobenius}.
\end{proof}

By combining Theorem \ref{thm:exA2modular}, Lemma \ref{lem:highestweightdirectsummandtypeA2} and Corollary \ref{cor:highestweightdirectsummandFrobenius}, we obtain a complete description of the generic direct summands of tensor products of simple $\G$-modules, for $\G$ of type $\mathrm{A}_2$.

\subsection*{Dual Weyl modules}

Let us once again start by recalling the main results of X.~Chan and J.~Wang in more detail.
We return to our strategy of discussing the modular case and the quantum case at the same time.

According to Theorem 2.1(3) in \cite{ChenWang}, there is, for every $\nu \in C_\mathrm{fund} \cap X$, a unique indecomposable $\G$-module $M_\nabla(\nu)$ (denoted there by $Q(u\Cdot\nu)$) that admits a short exact sequence
\[ 0 \to \nabla(u\Cdot\nu) \to M_\nabla(\nu) \to \nabla(us\Cdot\nu) \oplus \nabla(ut\Cdot\nu) \to 0 . \]
See also Lemma 3.1 in \cite{ChenWang} and the computation thereafter.
Furthermore, the $\G$-module $M_\nabla(\nu)$ is isomorphic to the injective hull $I_\pi(u\Cdot\nu)$ of the simple $\G$-module $L(u\Cdot\nu)$ in the Serre subcategory of $\Rep(\G)$ generated by the simple $\G$-modules with highest weights in $\pi = \{ \nu , u\Cdot\nu , us\Cdot\nu , ut\Cdot\nu \}$.
Now (a shortened version of) Theorem~2.1 in \cite{ChenWang} is as follows:

\begin{Theorem} \label{thm:ChenWang}
	Let $\lambda,\mu\in C_\mathrm{fund} \cap X$.
	\begin{enumerate}
	\item The tensor product $\nabla(\lambda) \otimes \nabla(\mu)$ is a direct sum of indecomposable tilting modules.
	\item The tensor product $\nabla(u\Cdot\lambda) \otimes \nabla(\mu)$ is a direct sum of induced modules $\nabla(u\Cdot\nu)$, with $\nu \in C_\mathrm{fund} \cap X$, and of negligible tilting modules.
	\item The tensor product $\nabla(u\Cdot\lambda) \otimes \nabla(u\Cdot\mu)$ is a direct sum of $\G$-modules of the form $M_\nabla(\nu)$, with $\nu \in C_\mathrm{fund} \cap X$, and of negligible tilting modules.
	\end{enumerate}
\end{Theorem}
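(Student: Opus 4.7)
The plan is to reduce each of the three cases to a computation at the distinguished weight pairs $(0,0)$, $(u\Cdot0,0)$ and $(u\Cdot0,u\Cdot0)$ via the translation principle for tensor products (Theorem~\ref{thm:translationtensor}), and then to invoke Theorem~\ref{thm:BowmanDotyMartin} and Lemma~\ref{lem:regularpartA2}.

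Case~(1) is immediate: since $\lambda,\mu \in C_\mathrm{fund}\cap X$, we have $\nabla(\lambda)\cong L(\lambda)\cong T(\lambda)$ and likewise for $\mu$, and a tensor product of tilting modules is a tilting module by the Donkin-Mathieu-Paradowski theorem. Krull-Schmidt then gives the claim. For case~(2), I would first observe that $\nabla(u\Cdot\lambda)\cong T_0^\lambda L(u\Cdot0)$ and $\nabla(\mu)\cong T_0^\mu L(0)$, so Theorem~\ref{thm:translationtensor} reduces the regular part to $L(u\Cdot0)\otimes L(0)\cong L(u\Cdot0)$, yielding
\[ \big(\nabla(u\Cdot\lambda)\otimes\nabla(\mu)\big)_\mathrm{reg} \cong \bigoplus_{\nu\in C_\mathrm{fund}\cap X}\nabla(u\Cdot\nu)^{\oplus c_{\lambda,\mu}^\nu} , \]
since $T_0^\nu L(u\Cdot0)\cong L(u\Cdot\nu) \cong \nabla(u\Cdot\nu)$. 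The remaining singular summands must then be shown to be tilting. My plan is to compare $\nabla(u\Cdot\lambda)\otimes\nabla(\mu)$ with the tilting module $T(u\Cdot\lambda)\otimes T(\mu)$ using the short exact sequence
\[ 0\to\nabla(u\Cdot\lambda)\to T(u\Cdot\lambda)\to L(\lambda)\to 0 \]
(which holds because $T(u\Cdot\lambda)$ has only $L(\lambda)$ and $L(u\Cdot\lambda)$ as composition factors in the principal block for $\mathrm{A}_2$) and a weight-theoretic argument ruling out non-tilting indecomposable summands with highest weight outside $\overline C_\mathrm{fund}\cup u\Cdot\overline C_\mathrm{fund}$.

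Case~(3) follows the same template. The key input is Lemma~\ref{lem:regularpartA2}, which gives $\big(L(u\Cdot0)\otimes L(u\Cdot0)\big)_\mathrm{reg} \cong M(0)\oplus L(0)$, and since here $L(u\Cdot0) = \nabla(u\Cdot0)$ this already computes the regular part of $\nabla(u\Cdot0)\otimes\nabla(u\Cdot0)$. One then identifies $M(0)\cong M_\nabla(0)$ by reading off the short exact sequence
\[ 0\to\nabla(u\Cdot0)\to M(0)\to \nabla(us\Cdot0)\oplus\nabla(ut\Cdot0)\to 0 \]
directly from the Alperin diagram of $M(0)$ on page~\pageref{def:Mnu}, which pins $M(0)$ down as the injective hull $I_\pi(u\Cdot0)$. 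Theorem~\ref{thm:translationtensor} then propagates this decomposition to arbitrary $\lambda,\mu\in C_\mathrm{fund}\cap X$, and the singular part is handled exactly as in case~(2).

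The main obstacle in this plan is verifying that the singular direct summands in cases~(2) and~(3) are genuinely tilting modules, since the translation principle for tensor products only controls the regular part directly. This step will rely on combining the good-filtration theorem for $\nabla(u\Cdot\lambda)\otimes\nabla(\mu)$ with the alcove geometry in type $\mathrm{A}_2$, to ensure that no exotic non-tilting indecomposable with highest weight in a higher alcove can arise.
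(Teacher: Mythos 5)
The paper does not itself prove this statement: it is presented as a shortened version of Theorem~2.1 of the cited Chan--Wang paper, so the ``proof'' in this paper is simply a citation. Your proposal tries to rederive the result from the machinery developed earlier in the paper, which is a reasonable ambition, but it rests on a false identification that breaks cases (2) and (3). You repeatedly use $L(u\Cdot\nu) \cong \nabla(u\Cdot\nu)$ for $\nu \in C_\mathrm{fund}\cap X$ (and the special case $L(u\Cdot 0) = \nabla(u\Cdot 0)$), but these modules are not isomorphic in type $\mathrm{A}_2$: the dual Weyl module $\nabla(u\Cdot\nu)$ has two composition factors, namely $L(u\Cdot\nu) = \soc\nabla(u\Cdot\nu)$ and $L(\nu) \cong \nabla(u\Cdot\nu)/L(u\Cdot\nu)$. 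Consequently, in case (2) what is true is $T_0^\lambda L(u\Cdot 0) \cong L(u\Cdot\lambda)$, not $\nabla(u\Cdot\lambda)$; the latter is $T_0^\lambda \nabla(u\Cdot 0)$. The same confusion makes your identification $M(0) \cong M_\nabla(0)$ in case (3) false: reading off the Alperin diagram, $M(0)$ has five composition factors ($L(u\Cdot 0)$ with multiplicity two, plus $L(0)$, $L(us\Cdot 0)$, $L(ut\Cdot 0)$), whereas $M_\nabla(0)$, assembled from $\nabla(u\Cdot 0)$, $\nabla(us\Cdot 0)$ and $\nabla(ut\Cdot 0)$, has six ($L(u\Cdot 0)$ appearing with multiplicity three). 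The paper is careful to keep these apart: Lemma~\ref{lem:regularpartA2} gives $G(u,u) \cong M(0)$, while the text just after Theorem~\ref{thm:ChenWang} gives $G_\nabla(u,u) \cong M_\nabla(0)$; these generic direct summands live in tensor products of simple modules and of dual Weyl modules respectively, and are genuinely different objects.

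A second, independent gap is the step you yourself flag as the ``main obstacle'': showing that every singular indecomposable direct summand in cases (2) and (3) is a tilting module. This is precisely the nontrivial content and it is left entirely unaddressed. The translation principle for tensor products (Theorem~\ref{thm:translationtensor}) is silent about the singular part by design, and Theorem~\ref{thm:BowmanDotyMartin} concerns tensor products of simple $\G$-modules, not of dual Weyl modules, so neither tool closes this gap; the short exact sequence $0\to\nabla(u\Cdot\lambda)\to T(u\Cdot\lambda)\to\nabla(\lambda)\to 0$ that you propose to tensor with $\nabla(\mu)$ is a sensible starting point but is not followed through. Case (1) is correct as written, but it is also the case that needs nothing beyond the Donkin--Mathieu--Paradowski theorem.
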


Using the preceding theorem, it is straightforward to work out the generic direct summands of tensor products of induced modules with $\ell$-restricted $\ell$-regular highest weights. First note that
\[ \nabla(0) \cong \nabla(0) \otimes \nabla(0) \cong \big( \nabla(0) \otimes \nabla(0) \big)_\mathrm{reg} \cong G_\nabla(e,e) \]
and similarly
\[ \nabla(u\Cdot0) \cong \nabla(u\Cdot0) \otimes \nabla(0) \cong \big( \nabla(u\Cdot0) \otimes \nabla(0) \big)_\mathrm{reg} \cong G_\nabla(u,e) , \]
even without using the theorem. The generic direct summand $G_\nabla(u,u)$ of $\nabla(u\Cdot0) \otimes \nabla(u\Cdot0)$ is regular and belongs to $\Rep_0(\G)$, so part (3) of Theorem \ref{thm:ChenWang} implies that
\[ G_\nabla(u,u) \cong M_\nabla(0) . \]

\begin{Remark}
	In Corollary 3.15 in \cite{ChenWang}, it is shown that, for $\lambda,\mu,\nu \in C_\mathrm{fund} \cap X$, the multiplicity of $M_\nabla(\nu)$ in a Krull-Schmidt decomposition of $\nabla(u\Cdot\lambda) \otimes \nabla(u\Cdot\mu)$ is given by the structure constant $c_{\lambda,\mu}^\nu$ of the Verlinde algebra.
	The idea that these structure constants should govern the multiplicities of regular indecomposable direct summands in tensor products (as in Theorem \ref{thm:translationtensor}) arose when the author was studying this result, but the proof from \cite{ChenWang} does not carry over to our more general setting.
	A similar argument to X.~Chan and J.~Wang's proof can, however, be used to show that $c_{\lambda,\mu}^\nu$ is the multiplicity of $T_0^\nu G_\nabla(x,y)$ in a Krull-Schmidt decomposition of $\nabla(x\Cdot\lambda) \otimes \nabla(y\Cdot\mu)$, for $x,y\in W_\mathrm{aff}^+$.
	(In our setting, this follows from Remarks \ref{rem:genericdirectsummandinducedmodule} and \ref{rem:genericdirectsummand}.)
\end{Remark}

\bibliographystyle{alpha}
\bibliography{tensor}

\end{document}